\definecolor{darkblue}{rgb}{0,0.1,0.5}
\def\blx@bblfile@bibtex{
  \blx@secinit
  \begingroup
  \blx@bblstart
 
%
%
\begingroup
\makeatletter
\@ifundefined{ver@biblatex.sty}
  {\@latex@error
     {Missing 'biblatex' package}
     {The bibliography requires the 'biblatex' package.}
      \aftergroup }
  {}
\endgroup

\entry{apostol1974mathematical}{book}{}
  \name{author}{1}{}{%
    {{}%
     {Apostol}{A.}%
     {T.~M.}{T.~M.}%
     {}{}%
     {}{}}%
  }
  \list{publisher}{1}{%
    {Addison-Wesley Publishing Co., Reading--London--Don Mills}%
  }
  \strng{namehash}{ATM1}
  \strng{fullhash}{ATM1}
  \field{labelyear}{1974}
  \field{sortinit}{A}
  \field{edition}{Second edition}
  \field{title}{Mathematical analysis}
  \field{year}{1974}
\endentry

\entry{applebaum2004lpa}{book}{}
  \name{author}{1}{}{%
    {{}%
     {Applebaum}{A.}%
     {D.}{D.}%
     {}{}%
     {}{}}%
  }
  \list{publisher}{1}{%
    {Cambridge University Press}%
  }
  \strng{namehash}{AD1}
  \strng{fullhash}{AD1}
  \field{labelyear}{2004}
  \field{sortinit}{A}
  \field{isbn}{0-521-83263-2}
  \field{series}{Cambridge Studies in Advanced Mathematics}
  \field{title}{L\'evy processes and stochastic calculus}
  \field{volume}{93}
  \list{location}{1}{%
    {Cambridge}%
  }
  \field{year}{2004}
\endentry

\entry{barndorff2011nongaussian}{article}{}
  \name{author}{2}{}{%
    {{}%
     {Barndorff-Nielsen}{B.-N.}%
     {O.~E.}{O.~E.}%
     {}{}%
     {}{}}%
    {{}%
     {Shephard}{S.}%
     {N.}{N.}%
     {}{}%
     {}{}}%
  }
  \strng{namehash}{BNOESN1}
  \strng{fullhash}{BNOESN1}
  \field{labelyear}{2001}
  \field{sortinit}{B}
  \verb{doi}
  \verb 10.1111/1467-9868.00282
  \endverb
  \field{issn}{1369-7412}
  \field{number}{2}
  \field{pages}{167\bibrangedash 241}
  \field{title}{Non\hyp{}{G}aussian {O}rnstein--{U}hlenbeck-based models and
  some of their uses in financial economics}
  \verb{url}
  \verb http://dx.doi.org/10.1111/1467-9868.00282
  \endverb
  \field{volume}{63}
  \field{journaltitle}{J. R. Stat. Soc. Ser. B}
  \field{year}{2001}
\endentry

\entry{barndorff2011multivariate}{article}{}
  \name{author}{2}{}{%
    {{}%
     {Barndorff-Nielsen}{B.-N.}%
     {O.~E.}{O.~E.}%
     {}{}%
     {}{}}%
    {{}%
     {Stelzer}{S.}%
     {R.}{R.}%
     {}{}%
     {}{}}%
  }
  \strng{namehash}{BNOESR1}
  \strng{fullhash}{BNOESR1}
  \field{labelyear}{2011}
  \field{sortinit}{B}
  \field{issn}{1050-5164}
  \field{number}{1}
  \field{pages}{140\bibrangedash 182}
  \field{title}{Multivariate sup{OU} Processes}
  \field{volume}{21}
  \field{journaltitle}{Ann. Appl. Probab.}
  \field{year}{2011}
\endentry

\entry{bauer2002wahrscheinlich}{book}{}
  \name{author}{1}{}{%
    {{}%
     {Bauer}{B.}%
     {H.}{H.}%
     {}{}%
     {}{}}%
  }
  \list{publisher}{1}{%
    {Walter de Gruyter \& Co.}%
  }
  \strng{namehash}{BH1}
  \strng{fullhash}{BH1}
  \field{labelyear}{2002}
  \field{sortinit}{B}
  \field{edition}{Fifth edition}
  \field{isbn}{3-11-017236-4}
  \field{series}{de Gruyter Lehrbuch}
  \field{title}{Wahrscheinlichkeitstheorie}
  \list{location}{1}{%
    {Berlin}%
  }
  \field{year}{2002}
\endentry

\entry{benth2009dynamic}{article}{}
  \name{author}{2}{}{%
    {{}%
     {Benth}{B.}%
     {F.~E.}{F.~E.}%
     {}{}%
     {}{}}%
    {{}%
     {{\v{S}}altyt{\.e}~Benth}{{\v{S}}.~B.}%
     {J.}{J.}%
     {}{}%
     {}{}}%
  }
  \list{publisher}{1}{%
    {Elsevier}%
  }
  \strng{namehash}{BFESBJ1}
  \strng{fullhash}{BFESBJ1}
  \field{labelyear}{2009}
  \field{sortinit}{B}
  \field{number}{1}
  \field{pages}{16\bibrangedash 24}
  \field{title}{Dynamic pricing of wind futures}
  \field{volume}{31}
  \field{journaltitle}{Energy Economics}
  \field{year}{2009}
\endentry

\entry{brockwell2005lda}{article}{}
  \name{author}{2}{}{%
    {{}%
     {Brockwell}{B.}%
     {P.}{P.}%
     {}{}%
     {}{}}%
    {{}%
     {Marquardt}{M.}%
     {T.~G.}{T.~G.}%
     {}{}%
     {}{}}%
  }
  \strng{namehash}{BPMTG1}
  \strng{fullhash}{BPMTG1}
  \field{labelyear}{2005}
  \field{sortinit}{B}
  \field{issn}{1017-0405}
  \field{number}{2}
  \field{pages}{477\bibrangedash 494}
  \field{title}{L\'evy\hyp{}driven and fractionally integrated {ARMA} processes
  with continuous time parameter}
  \field{volume}{15}
  \field{journaltitle}{Stat. Sinica}
  \field{year}{2005}
\endentry

\entry{brockwell2001levy}{article}{}
  \name{author}{1}{}{%
    {{}%
     {Brockwell}{B.}%
     {P.~J.}{P.~J.}%
     {}{}%
     {}{}}%
  }
  \strng{namehash}{BPJ1}
  \strng{fullhash}{BPJ1}
  \field{labelyear}{2001}
  \field{sortinit}{B}
  \field{issn}{0020-3157}
  \field{number}{1}
  \field{pages}{113\bibrangedash 124}
  \field{title}{L\'evy\hyp{}driven {CARMA} processes}
  \field{volume}{53}
  \field{journaltitle}{Ann. Inst. Stat. Math.}
  \field{year}{2001}
\endentry

\entry{brockwell2004rct}{article}{}
  \name{author}{1}{}{%
    {{}%
     {Brockwell}{B.}%
     {P.~J.}{P.~J.}%
     {}{}%
     {}{}}%
  }
  \strng{namehash}{BPJ1}
  \strng{fullhash}{BPJ1}
  \field{labelyear}{2004}
  \field{sortinit}{B}
  \verb{doi}
  \verb 10.1239/jap/1082552212
  \endverb
  \field{issn}{0021-9002}
  \field{note}{Stochastic methods and their applications}
  \field{pages}{375\bibrangedash 382}
  \field{title}{Representations of continuous\hyp{}time {ARMA} processes}
  \verb{url}
  \verb http://dx.doi.org/10.1239/jap/1082552212
  \endverb
  \field{volume}{41A}
  \field{journaltitle}{J. Appl. Probab.}
  \field{year}{2004}
\endentry

\entry{brockwell1991tst}{book}{}
  \name{author}{2}{}{%
    {{}%
     {Brockwell}{B.}%
     {P.~J.}{P.~J.}%
     {}{}%
     {}{}}%
    {{}%
     {Davis}{D.}%
     {R.~A.}{R.~A.}%
     {}{}%
     {}{}}%
  }
  \list{publisher}{1}{%
    {Springer\hyp{}Verlag}%
  }
  \strng{namehash}{BPJDRA1}
  \strng{fullhash}{BPJDRA1}
  \field{labelyear}{1991}
  \field{sortinit}{B}
  \field{edition}{Second edition}
  \field{isbn}{0-387-97429-6}
  \field{series}{Springer Series in Statistics}
  \field{title}{Time series: theory and methods}
  \list{location}{1}{%
    {New York}%
  }
  \field{year}{1991}
\endentry

\entry{brockwell2010estimation}{article}{}
  \name{author}{3}{}{%
    {{}%
     {Brockwell}{B.}%
     {P.~J.}{P.~J.}%
     {}{}%
     {}{}}%
    {{}%
     {Davis}{D.}%
     {R.~A.}{R.~A.}%
     {}{}%
     {}{}}%
    {{}%
     {Yang}{Y.}%
     {Y.}{Y.}%
     {}{}%
     {}{}}%
  }
  \strng{namehash}{BPJDRAYY1}
  \strng{fullhash}{BPJDRAYY1}
  \field{labelyear}{2011}
  \field{sortinit}{B}
  \field{number}{2}
  \field{pages}{250\bibrangedash 259}
  \field{title}{Estimation for nonnegative {L\'e}vy\hyp{}driven {CARMA}
  processes}
  \field{volume}{29}
  \field{journaltitle}{J. Bus. Econ. Stat.}
  \field{year}{2011}
\endentry

\entry{brockwell2009existence}{article}{}
  \name{author}{2}{}{%
    {{}%
     {Brockwell}{B.}%
     {P.~J.}{P.~J.}%
     {}{}%
     {}{}}%
    {{}%
     {Lindner}{L.}%
     {A.}{A.}%
     {}{}%
     {}{}}%
  }
  \strng{namehash}{BPJLA1}
  \strng{fullhash}{BPJLA1}
  \field{labelyear}{2009}
  \field{sortinit}{B}
  \field{issn}{0304-4149}
  \field{number}{8}
  \field{pages}{2660\bibrangedash 2681}
  \field{title}{Existence and uniqueness of stationary {L}\'evy\hyp{}driven
  {CARMA} processes}
  \field{volume}{119}
  \field{journaltitle}{Stoch. Process. Their Appl.}
  \field{year}{2009}
\endentry

\entry{Deuflhard2008}{book}{}
  \name{author}{2}{}{%
    {{}%
     {Deuflhard}{D.}%
     {P.}{P.}%
     {}{}%
     {}{}}%
    {{}%
     {Hohmann}{H.}%
     {A.}{A.}%
     {}{}%
     {}{}}%
  }
  \list{publisher}{1}{%
    {Walter de Gruyter \& Co.}%
  }
  \strng{namehash}{DPHA1}
  \strng{fullhash}{DPHA1}
  \field{labelyear}{2008}
  \field{sortinit}{D}
  \field{edition}{Fourth edition}
  \field{isbn}{978-3-11-020354-7}
  \field{note}{Eine algorithmisch orientierte Einf{\"u}hrung}
  \field{series}{de Gruyter Lehrbuch}
  \field{title}{Numerische {M}athematik. 1}
  \list{location}{1}{%
    {Berlin}%
  }
  \field{year}{2008}
\endentry

\entry{doob1944egp}{article}{}
  \name{author}{1}{}{%
    {{}%
     {Doob}{D.}%
     {J.~L.}{J.~L.}%
     {}{}%
     {}{}}%
  }
  \strng{namehash}{DJL1}
  \strng{fullhash}{DJL1}
  \field{labelyear}{1944}
  \field{sortinit}{D}
  \field{issn}{0003-4851}
  \field{number}{3}
  \field{pages}{229\bibrangedash 282}
  \field{title}{The elementary {G}aussian processes}
  \field{volume}{15}
  \field{journaltitle}{Ann. Math. Statistics}
  \field{year}{1944}
\endentry

\entry{figueroa2009nonparametric}{incollection}{}
  \name{author}{1}{}{%
    {{}%
     {Figueroa-L{\'o}pez}{F.-L.}%
     {J.~E.}{J.~E.}%
     {}{}%
     {}{}}%
  }
  \list{publisher}{1}{%
    {Inst. Math. Stat.}%
  }
  \strng{namehash}{FLJE1}
  \strng{fullhash}{FLJE1}
  \field{labelyear}{2009}
  \field{sortinit}{F}
  \field{booktitle}{Optimality}
  \verb{doi}
  \verb 10.1214/09-LNMS5709
  \endverb
  \field{pages}{117\bibrangedash 146}
  \field{series}{IMS Lecture Notes Monogr. Ser.}
  \field{title}{Nonparametric estimation of {L}\'evy models based on
  discrete-sampling}
  \verb{url}
  \verb http://dx.doi.org/10.1214/09-LNMS5709
  \endverb
  \field{volume}{57}
  \list{location}{1}{%
    {Beachwood}%
  }
  \field{year}{2009}
\endentry

\entry{gugushvili2009nonparametric}{article}{}
  \name{author}{1}{}{%
    {{}%
     {Gugushvili}{G.}%
     {S.}{S.}%
     {}{}%
     {}{}}%
  }
  \strng{namehash}{GS1}
  \strng{fullhash}{GS1}
  \field{labelyear}{2009}
  \field{sortinit}{G}
  \verb{doi}
  \verb 10.1080/10485250802645824
  \endverb
  \field{issn}{1048-5252}
  \field{number}{3}
  \field{pages}{321\bibrangedash 343}
  \field{title}{Nonparametric estimation of the characteristic triplet of a
  discretely observed {L}\'evy process}
  \verb{url}
  \verb http://dx.doi.org/10.1080/10485250802645824
  \endverb
  \field{volume}{21}
  \field{journaltitle}{J. Nonparametr. Stat.}
  \field{year}{2009}
\endentry

\entry{hall2005generalized}{book}{}
  \name{author}{1}{}{%
    {{}%
     {Hall}{H.}%
     {A.~R.}{A.~R.}%
     {}{}%
     {}{}}%
  }
  \list{publisher}{1}{%
    {Oxford University Press}%
  }
  \strng{namehash}{HAR1}
  \strng{fullhash}{HAR1}
  \field{labelyear}{2005}
  \field{sortinit}{H}
  \field{isbn}{0-19-877520-2}
  \field{series}{Advanced Texts in Econometrics}
  \field{title}{Generalized method of moments}
  \list{location}{1}{%
    {Oxford}%
  }
  \field{year}{2005}
\endentry

\entry{hansen1982}{article}{}
  \name{author}{1}{}{%
    {{}%
     {Hansen}{H.}%
     {L.~P.}{L.~P.}%
     {}{}%
     {}{}}%
  }
  \strng{namehash}{HLP1}
  \strng{fullhash}{HLP1}
  \field{labelyear}{1982}
  \field{sortinit}{H}
  \verb{doi}
  \verb 10.2307/1912775
  \endverb
  \field{issn}{0012-9682}
  \field{number}{4}
  \field{pages}{1029\bibrangedash 1054}
  \field{title}{Large sample properties of generalized method of moments
  estimators}
  \verb{url}
  \verb http://dx.doi.org/10.2307/1912775
  \endverb
  \field{volume}{50}
  \field{journaltitle}{Econometrica}
  \field{year}{1982}
\endentry

\entry{jacod2003probability}{book}{}
  \name{author}{2}{}{%
    {{}%
     {Jacod}{J.}%
     {J.}{J.}%
     {}{}%
     {}{}}%
    {{}%
     {Protter}{P.}%
     {P.}{P.}%
     {}{}%
     {}{}}%
  }
  \list{publisher}{1}{%
    {Springer\hyp{}Verlag}%
  }
  \strng{namehash}{JJPP1}
  \strng{fullhash}{JJPP1}
  \field{labelyear}{2003}
  \field{sortinit}{J}
  \field{edition}{Second edition}
  \field{isbn}{3-540-43871-8}
  \field{series}{Universitext}
  \field{title}{Probability essentials}
  \list{location}{1}{%
    {Berlin}%
  }
  \field{year}{2003}
\endentry

\entry{jacod2003limit}{book}{}
  \name{author}{2}{}{%
    {{}%
     {Jacod}{J.}%
     {J.}{J.}%
     {}{}%
     {}{}}%
    {{}%
     {Shiryaev}{S.}%
     {A.~N.}{A.~N.}%
     {}{}%
     {}{}}%
  }
  \list{publisher}{1}{%
    {Springer\hyp{}Verlag}%
  }
  \strng{namehash}{JJSAN1}
  \strng{fullhash}{JJSAN1}
  \field{labelyear}{2003}
  \field{sortinit}{J}
  \field{edition}{Second edition}
  \field{isbn}{3-540-43932-3}
  \field{series}{Grundlehren der Mathematischen Wissenschaften}
  \field{title}{Limit theorems for stochastic processes}
  \field{volume}{288}
  \list{location}{1}{%
    {Berlin}%
  }
  \field{year}{2003}
\endentry

\entry{kailath1980linear}{book}{}
  \name{author}{1}{}{%
    {{}%
     {Kailath}{K.}%
     {T.}{T.}%
     {}{}%
     {}{}}%
  }
  \list{publisher}{1}{%
    {Prentice-Hall Inc.}%
  }
  \strng{namehash}{KT1}
  \strng{fullhash}{KT1}
  \field{labelyear}{1980}
  \field{sortinit}{K}
  \field{isbn}{0-13-536961-4}
  \field{series}{Prentice-Hall Information and System Sciences Series}
  \field{title}{Linear systems}
  \list{location}{1}{%
    {Englewood Cliffs}%
  }
  \field{year}{1980}
\endentry

\entry{kailath1978fubini}{article}{}
  \name{author}{3}{}{%
    {{}%
     {Kailath}{K.}%
     {T.}{T.}%
     {}{}%
     {}{}}%
    {{}%
     {Segall}{S.}%
     {A.}{A.}%
     {}{}%
     {}{}}%
    {{}%
     {Zakai}{Z.}%
     {M.}{M.}%
     {}{}%
     {}{}}%
  }
  \strng{namehash}{KTSAZM1}
  \strng{fullhash}{KTSAZM1}
  \field{labelyear}{1978}
  \field{sortinit}{K}
  \field{issn}{0581-572X}
  \field{number}{2}
  \field{pages}{138\bibrangedash 143}
  \field{title}{Fubini\hyp{}type theorems for stochastic integrals}
  \field{volume}{40}
  \field{journaltitle}{Sankhy\=a Ser. A}
  \field{year}{1978}
\endentry

\entry{klenke2008probability}{book}{}
  \name{author}{1}{}{%
    {{}%
     {Klenke}{K.}%
     {A.}{A.}%
     {}{}%
     {}{}}%
  }
  \list{publisher}{1}{%
    {Springer\hyp{}Verlag London Ltd.}%
  }
  \strng{namehash}{KA1}
  \strng{fullhash}{KA1}
  \field{labelyear}{2008}
  \field{sortinit}{K}
  \field{series}{Universitext}
  \field{title}{Probability theory}
  \list{location}{1}{%
    {London}%
  }
  \field{year}{2008}
\endentry

\entry{krengel1985ergodic}{book}{}
  \name{author}{1}{}{%
    {{}%
     {Krengel}{K.}%
     {U.}{U.}%
     {}{}%
     {}{}}%
  }
  \list{publisher}{1}{%
    {Walter de Gruyter \& Co.}%
  }
  \strng{namehash}{KU1}
  \strng{fullhash}{KU1}
  \field{labelyear}{1985}
  \field{sortinit}{K}
  \field{isbn}{3-11-008478-3}
  \field{note}{With a supplement by Antoine Brunel}
  \field{series}{de Gruyter Studies in Mathematics}
  \field{title}{Ergodic theorems}
  \field{volume}{6}
  \list{location}{1}{%
    {Berlin}%
  }
  \field{year}{1985}
\endentry

\entry{leveque2007}{book}{}
  \name{author}{1}{}{%
    {{}%
     {LeVeque}{L.}%
     {R.~J.}{R.~J.}%
     {}{}%
     {}{}}%
  }
  \list{publisher}{2}{%
    {Society for Industrial}%
    {Applied Mathematics}%
  }
  \strng{namehash}{LRJ1}
  \strng{fullhash}{LRJ1}
  \field{labelyear}{2007}
  \field{sortinit}{L}
  \field{isbn}{978-0-898716-29-0}
  \field{note}{Steady\hyp{}state and time\hyp{}dependent problems}
  \field{title}{Finite difference methods for ordinary and partial differential
  equations}
  \list{location}{1}{%
    {Philadelphia}%
  }
  \field{year}{2007}
\endentry

\entry{marquardt2007mficarma}{article}{}
  \name{author}{1}{}{%
    {{}%
     {Marquardt}{M.}%
     {T.}{T.}%
     {}{}%
     {}{}}%
  }
  \strng{namehash}{MT1}
  \strng{fullhash}{MT1}
  \field{labelyear}{2007}
  \field{sortinit}{M}
  \field{issn}{0047-259X}
  \field{number}{9}
  \field{pages}{1705\bibrangedash 1725}
  \field{title}{Multivariate fractionally integrated {CARMA} processes}
  \field{volume}{98}
  \field{journaltitle}{J. Multivar. Anal.}
  \field{year}{2007}
\endentry

\entry{marquardt2007multivariate}{article}{}
  \name{author}{2}{}{%
    {{}%
     {Marquardt}{M.}%
     {T.}{T.}%
     {}{}%
     {}{}}%
    {{}%
     {Stelzer}{S.}%
     {R.}{R.}%
     {}{}%
     {}{}}%
  }
  \strng{namehash}{MTSR1}
  \strng{fullhash}{MTSR1}
  \field{labelyear}{2007}
  \field{sortinit}{M}
  \field{issn}{0304-4149}
  \field{number}{1}
  \field{pages}{96\bibrangedash 120}
  \field{title}{Multivariate {CARMA} processes}
  \field{volume}{117}
  \field{journaltitle}{Stoch. Process. Their Appl.}
  \field{year}{2007}
\endentry

\entry{masuda2004onmultidimensionalOU}{article}{}
  \name{author}{1}{}{%
    {{}%
     {Masuda}{M.}%
     {H.}{H.}%
     {}{}%
     {}{}}%
  }
  \strng{namehash}{MH1}
  \strng{fullhash}{MH1}
  \field{labelyear}{2004}
  \field{sortinit}{M}
  \verb{doi}
  \verb 10.3150/bj/1077544605
  \endverb
  \field{issn}{1350-7265}
  \field{number}{1}
  \field{pages}{97\bibrangedash 120}
  \field{title}{On multidimensional {O}rnstein--{U}hlenbeck processes driven by
  a general {L}\'evy process}
  \verb{url}
  \verb http://dx.doi.org/10.3150/bj/1077544605
  \endverb
  \field{volume}{10}
  \field{journaltitle}{Bernoulli}
  \field{year}{2004}
\endentry

\entry{mokkadem1988mixing}{article}{}
  \name{author}{1}{}{%
    {{}%
     {Mokkadem}{M.}%
     {A.}{A.}%
     {}{}%
     {}{}}%
  }
  \strng{namehash}{MA1}
  \strng{fullhash}{MA1}
  \field{labelyear}{1988}
  \field{sortinit}{M}
  \field{issn}{0304-4149}
  \field{number}{2}
  \field{pages}{309\bibrangedash 315}
  \field{title}{Mixing properties of {ARMA} processes}
  \field{volume}{29}
  \field{journaltitle}{Stoch. Process. Their Appl.}
  \field{year}{1988}
\endentry

\entry{newey1994largesample}{incollection}{}
  \name{author}{2}{}{%
    {{}%
     {Newey}{N.}%
     {W.~K.}{W.~K.}%
     {}{}%
     {}{}}%
    {{}%
     {McFadden}{M.}%
     {D.}{D.}%
     {}{}%
     {}{}}%
  }
  \list{publisher}{1}{%
    {North\hyp{}Holland}%
  }
  \strng{namehash}{NWKMD1}
  \strng{fullhash}{NWKMD1}
  \field{labelyear}{1994}
  \field{sortinit}{N}
  \field{booktitle}{Handbook of econometrics, {V}ol.\ {IV}}
  \field{pages}{2111\bibrangedash 2245}
  \field{series}{Handbooks in Econom.}
  \field{title}{Large sample estimation and hypothesis testing}
  \field{volume}{2}
  \list{location}{1}{%
    {Amsterdam}%
  }
  \field{year}{1994}
\endentry

\entry{ngo2010parametric}{article}{}
  \name{author}{1}{}{%
    {{}%
     {Ngo}{N.}%
     {H.-L.}{H.-L.}%
     {}{}%
     {}{}}%
  }
  \strng{namehash}{NHL1}
  \strng{fullhash}{NHL1}
  \field{labelyear}{2010}
  \field{sortinit}{N}
  \field{pages}{1443\bibrangedash 1469}
  \field{title}{Parametric estimation for discretely observed stochastic
  processes with jumps}
  \field{volume}{4}
  \field{journaltitle}{Electron. J. Stat.}
  \field{year}{2010}
\endentry

\entry{ogihara2011quasi}{article}{}
  \name{author}{2}{}{%
    {{}%
     {Ogihara}{O.}%
     {T.}{T.}%
     {}{}%
     {}{}}%
    {{}%
     {Yoshida}{Y.}%
     {N.}{N.}%
     {}{}%
     {}{}}%
  }
  \strng{namehash}{OTYN1}
  \strng{fullhash}{OTYN1}
  \field{labelyear}{2011}
  \field{sortinit}{O}
  \field{number}{3}
  \field{pages}{189\bibrangedash 229}
  \field{title}{Quasi-likelihood analysis for the stochastic differential
  equation with jumps}
  \field{volume}{14}
  \field{journaltitle}{Stat. Inference Stoch. Process.}
  \field{year}{2011}
\endentry

\entry{protter1990sia}{book}{}
  \name{author}{1}{}{%
    {{}%
     {Protter}{P.}%
     {P.}{P.}%
     {}{}%
     {}{}}%
  }
  \list{publisher}{1}{%
    {Springer\hyp{}Verlag}%
  }
  \strng{namehash}{PP1}
  \strng{fullhash}{PP1}
  \field{labelyear}{1990}
  \field{sortinit}{P}
  \field{isbn}{3-540-50996-8}
  \field{note}{A new approach}
  \field{series}{Applications of Mathematics}
  \field{title}{Stochastic integration and differential equations}
  \field{volume}{21}
  \list{location}{1}{%
    {Berlin}%
  }
  \field{year}{1990}
\endentry

\entry{rajput1989spectral}{article}{}
  \name{author}{2}{}{%
    {{}%
     {Rajput}{R.}%
     {B.~S.}{B.~S.}%
     {}{}%
     {}{}}%
    {{}%
     {Rosi{\'n}ski}{R.}%
     {J.}{J.}%
     {}{}%
     {}{}}%
  }
  \strng{namehash}{RBSRJ1}
  \strng{fullhash}{RBSRJ1}
  \field{labelyear}{1989}
  \field{sortinit}{R}
  \field{issn}{0178-8051}
  \field{number}{3}
  \field{pages}{451\bibrangedash 487}
  \field{title}{Spectral representations of infinitely divisible processes}
  \field{volume}{82}
  \field{journaltitle}{Probab. Theory Relat. Field}
  \field{year}{1989}
\endentry

\entry{rosenblatt1956central}{article}{}
  \name{author}{1}{}{%
    {{}%
     {Rosenblatt}{R.}%
     {M.}{M.}%
     {}{}%
     {}{}}%
  }
  \strng{namehash}{RM1}
  \strng{fullhash}{RM1}
  \field{labelyear}{1956}
  \field{sortinit}{R}
  \field{issn}{0027-8424}
  \field{number}{1}
  \field{pages}{43\bibrangedash 47}
  \field{title}{A central limit theorem and a strong mixing condition}
  \field{volume}{42}
  \field{journaltitle}{Proc. Nat. Acad. Sci. U. S. A.}
  \field{year}{1956}
\endentry

\entry{sato1991lpa}{book}{}
  \name{author}{1}{}{%
    {{}%
     {Sato}{S.}%
     {K.}{K.}%
     {}{}%
     {}{}}%
  }
  \list{publisher}{1}{%
    {Cambridge University Press}%
  }
  \strng{namehash}{SK1}
  \strng{fullhash}{SK1}
  \field{labelyear}{1999}
  \field{sortinit}{S}
  \field{isbn}{0-521-55302-4}
  \field{series}{Cambridge Studies in Advanced Mathematics}
  \field{title}{L\'evy processes and infinitely divisible distributions}
  \field{volume}{68}
  \list{location}{1}{%
    {Cambridge}%
  }
  \field{year}{1999}
\endentry

\entry{schlemm2011quasi}{misc}{}
  \name{author}{2}{}{%
    {{}%
     {Schlemm}{S.}%
     {E.}{E.}%
     {}{}%
     {}{}}%
    {{}%
     {Stelzer}{S.}%
     {R.}{R.}%
     {}{}%
     {}{}}%
  }
  \strng{namehash}{SESR1}
  \strng{fullhash}{SESR1}
  \field{labelyear}{2011}
  \field{sortinit}{S}
  \field{note}{Preprint: Available at \url{http://www-m4.ma.tum.de}}
  \field{title}{Quasi maximum likelihood estimation for strongly mixing linear
  state space models and multivariate {CARMA} processes}
  \field{year}{2011}
\endentry

\entry{schlemmmixing2010}{article}{}
  \name{author}{2}{}{%
    {{}%
     {Schlemm}{S.}%
     {E.}{E.}%
     {}{}%
     {}{}}%
    {{}%
     {Stelzer}{S.}%
     {R.}{R.}%
     {}{}%
     {}{}}%
  }
  \strng{namehash}{SESR1}
  \strng{fullhash}{SESR1}
  \field{labelyear}{2012}
  \field{sortinit}{S}
  \field{number}{1}
  \field{pages}{46\bibrangedash 63}
  \field{title}{Multivariate {CARMA} Processes, Continuous\hyp{}Time State
  Space Models and Complete Regularity of the Innovations of the Sampled
  Processes}
  \field{volume}{18}
  \field{journaltitle}{Bernoulli}
  \field{year}{2012}
\endentry

\entry{shimizu2006estimation}{article}{}
  \name{author}{2}{}{%
    {{}%
     {Shimizu}{S.}%
     {Y.}{Y.}%
     {}{}%
     {}{}}%
    {{}%
     {Yoshida}{Y.}%
     {N.}{N.}%
     {}{}%
     {}{}}%
  }
  \strng{namehash}{SYYN1}
  \strng{fullhash}{SYYN1}
  \field{labelyear}{2006}
  \field{sortinit}{S}
  \field{number}{3}
  \field{pages}{227\bibrangedash 277}
  \field{title}{Estimation of parameters for diffusion processes with jumps
  from discrete observations}
  \field{volume}{9}
  \field{journaltitle}{Stat. Inference Stoch. Process.}
  \field{year}{2006}
\endentry

\entry{shiryaev1996probability}{book}{}
  \name{author}{1}{}{%
    {{}%
     {Shiryaev}{S.}%
     {A.~N.}{A.~N.}%
     {}{}%
     {}{}}%
  }
  \list{publisher}{1}{%
    {Springer\hyp{}Verlag}%
  }
  \strng{namehash}{SAN1}
  \strng{fullhash}{SAN1}
  \field{labelyear}{1996}
  \field{sortinit}{S}
  \field{edition}{Second edition}
  \field{isbn}{0-387-94549-0}
  \field{note}{Translated from the first (1980) Russian edition by R. P. Boas}
  \field{series}{Graduate Texts in Mathematics}
  \field{title}{Probability}
  \field{volume}{95}
  \list{location}{1}{%
    {New York}%
  }
  \field{year}{1996}
\endentry

\entry{stone1962}{article}{}
  \name{author}{1}{}{%
    {{}%
     {Stone}{S.}%
     {B.~J.}{B.~J.}%
     {}{}%
     {}{}}%
  }
  \strng{namehash}{SBJ1}
  \strng{fullhash}{SBJ1}
  \field{labelyear}{1962}
  \field{sortinit}{S}
  \field{pages}{114\bibrangedash 116}
  \field{title}{Best possible ratios of certain matrix norms}
  \field{volume}{4}
  \field{journaltitle}{Numer. Math.}
  \field{year}{1962}
\endentry

\entry{todorov2006simulation}{article}{}
  \name{author}{2}{}{%
    {{}%
     {Todorov}{T.}%
     {V.}{V.}%
     {}{}%
     {}{}}%
    {{}%
     {Tauchen}{T.}%
     {G.}{G.}%
     {}{}%
     {}{}}%
  }
  \strng{namehash}{TVTG1}
  \strng{fullhash}{TVTG1}
  \field{labelyear}{2006}
  \field{sortinit}{T}
  \field{issn}{0735-0015}
  \field{number}{4}
  \field{pages}{455\bibrangedash 469}
  \field{title}{Simulation methods for {L}\'evy\hyp{}driven
  continuous\hyp{}time autoregressive moving average ({CARMA}) stochastic
  volatility models}
  \field{volume}{24}
  \field{journaltitle}{J. Bus. Econ. Stat.}
  \field{year}{2006}
\endentry

\entry{tucker1965necessary}{article}{}
  \name{author}{1}{}{%
    {{}%
     {Tucker}{T.}%
     {H.~G.}{H.~G.}%
     {}{}%
     {}{}}%
  }
  \strng{namehash}{THG1}
  \strng{fullhash}{THG1}
  \field{labelyear}{1965}
  \field{sortinit}{T}
  \field{issn}{0002-9947}
  \field{pages}{316\bibrangedash 330}
  \field{title}{On a necessary and sufficient condition that an infinitely
  divisible distribution be absolutely continuous}
  \field{volume}{118}
  \field{journaltitle}{Trans. Am. Math. Soc.}
  \field{year}{1965}
\endentry

\entry{vanderwaart1998}{book}{}
  \name{author}{1}{}{%
    {{}%
     {Vaart}{V.}%
     {A.~W.}{A.~W.}%
     {van~der}{v.~d.}%
     {}{}}%
  }
  \list{publisher}{1}{%
    {Cambridge University Press}%
  }
  \strng{namehash}{VAWvd1}
  \strng{fullhash}{VAWvd1}
  \field{labelyear}{1998}
  \field{sortinit}{V}
  \field{isbn}{0-521-49603-9; 0-521-78450-6}
  \field{series}{Cambridge Series in Statistical and Probabilistic Mathematics}
  \field{title}{Asymptotic statistics}
  \field{volume}{3}
  \list{location}{1}{%
    {Cambridge}%
  }
  \field{year}{1998}
\endentry

\lossort
\endlossort

  \blx@bblend
  \endgroup
  \csnumgdef{blx@labelnumber@\the\c@refsection}{0}%
  \iftoggle{blx@reencode}{\blx@reencode}{}}
\crefname{lemma}{Lemma}{Lemmas}
\crefname{equation}{Eq.}{Eqs.}
\Crefname{equation}{Equation}{Equations}
\crefname{assumption}{Assumption}{Assumptions}
\crefname{pluralequation}{Eqs.}{Eqs.}
\numberwithin{equation}{section}
\newcommand{\rank}{\operatorname{rank}}
\newcommand{\argmin}{\operatorname{argmin}}
\newcommand{\argmax}{\operatorname{argmax}}
\newcommand{\convd}{\xrightarrow{d}}
\newcommand{\convp}{\xrightarrow{p}}
\newcommand{\eqd}{\stackrel{d}{=}}
\newcommand{\re}{\operatorname{Re}}
\newcommand{\imag}{\operatorname{Im}}
\renewcommand{\leq}{\leqslant}
\renewcommand{\geq}{\geqslant}
\newcommand{\E}{\mathbb{E}}
\newcommand{\Pb}{\mathbb{P}}
\newcommand{\N}{\mathbb{N}}
\newcommand{\Z}{\mathbb{Z}}
\newcommand{\R}{\mathbb{R}}
\newcommand{\I}{{\bf 1 	}}
\newcommand{\C}{\mathbb{C}}
\newcommand{\K}{\mathbb{K}}
\newcommand{\Y}{\boldsymbol{Y}}
\newcommand{\X}{\boldsymbol{X}}
\newcommand{\Lb}{\boldsymbol{L}}
\newcommand{\W}{\boldsymbol{W}}
\newcommand{\ZZ}{\boldsymbol{Z}}
\newcommand{\bth}{\boldsymbol{\vartheta}}
\newcommand{\beps}{\boldsymbol{\varepsilon}}
\newcommand{\bu}{\boldsymbol{u}}
\newcommand{\bx}{\boldsymbol{x}}
\newcommand{\bzero}{\boldsymbol{0}}
\newcommand{\be}{\boldsymbol{e}}
\newcommand{\nuL}{\nu^{\Lb}}
\newcommand{\bgammaL}{\bgamma^{\Lb}}
\newcommand{\dd}{\mathrm{d}}
\newcommand{\ee}{\mathrm{e}}
\newcommand{\ii}{\mathrm{i}}
\newcommand{\DD}{\mathrm{D}}
\newcommand{\bgamma}{\boldsymbol{\gamma}}
\newcommand{\ppartial}{\mathrm{\partial}}
\newcommand{\A}{{\bf\operatorname{A}}}
\newcommand{\B}{{\bf\operatorname{B}}}
\newcommand{\M}{{\bf\operatorname{M}}}
\theoremstyle{plain}
\newtheorem{theorem}{Theorem}[section]
\newtheorem{proposition}[theorem]{Proposition}
\newtheorem{lemma}[theorem]{Lemma}
\newtheorem{corollary}[theorem]{Corollary}
\theoremstyle{definition}
\newtheorem{definition}[theorem]{Definition}
\theoremstyle{remark}
\newtheorem{remark}[theorem]{Remark}
\newcounter{assumption}
\newtheorem{assumption}[assumption]{Assumption}
\title[Estimation of the driving L\'evy process of MCARMA processes]{Parametric estimation of the driving L\'evy process of multivariate CARMA processes from discrete observations}
\author{Peter J. Brockwell}
\address{Departments of Statistics, Colorado State University and Columbia University,  USA}
\email{pjbrock@stat.colostate.edu}
\author{Eckhard Schlemm}
\address{Wolfson College, University of Cambridge}
\email{es555@cam.ac.uk}
\begin{document}

\keywords{generalized method of moments, high-frequency sampling, infinitely divisible distribution, multivariate CARMA process, parameter estimation}
\subjclass[2010]{Primary: 62F10, 60G51, 60F05; secondary: 60E07, 60G10}

\begin{abstract}
We consider the parametric estimation of the driving L\'evy process of a multivariate continuous-time autoregressive moving average (MCARMA) process, which is observed on the discrete time grid $(0,h,2h,\ldots)$. Beginning with a new state space representation, we develop a method to recover the driving L\'evy process exactly from a continuous record of the observed MCARMA process. We use tools from numerical analysis and the theory of infinitely divisible distributions to extend this result to allow for the approximate recovery of unit increments of the driving L\'evy process from discrete-time observations of the MCARMA process. We show that, if the sampling interval $h=h_N$ is chosen dependent on $N$, the length of the observation horizon, such that $N h_N$ converges to zero as $N$ tends to infinity, then any suitable generalized method of moments estimator based on this reconstructed sample of unit increments has the same asymptotic distribution as the one based on the true increments, and is, in 
particular, asymptotically normally distributed.
\end{abstract}


\maketitle

\tableofcontents

\section{Introduction}
Continuous-time autoregressive moving average (CARMA) processes generalize the widely employed discrete-time ARMA process to a continuous-time setting. Heuristically, a multivariate CARMA process of order $(p,q)$ can be thought of as a stationary solution $\Y$ of the linear differential equation
\begin{equation}
\label{eq-DefMCARMAheu}
\left[\DD^p+A_1\DD^{p-1}+\ldots+A_p\right]\Y(t) = \left[B_0+B_1\DD+\ldots+B_q\DD^{q}\right]\DD\Lb(t),\quad \DD=\frac{\dd}{\dd t},\quad p>q,
\end{equation}
where $\Lb$ is a L\'evy process and $A_i$, $B_j$ are coefficient matrices, see \cref{section-MCARMA} for a precise definition. They first appeared in the literature in \citep{doob1944egp}, where univariate Gaussian CARMA processes were defined. Recent years have seen a rapid development in both the theory and the applications of this class of stochastic processes (see, e.g., \citep{brockwell2004rct} and references therein). In \citep{brockwell2001levy}, the restriction of Gaussianity was relaxed and CARMA processes driven by L\'evy processes with finite moments of any order greater than zero were introduced (see also \citep{brockwell2009existence}). This extension allowed for CARMA processes to have jumps as well as a wide variety of marginal distributions, possibly exhibiting fat tails. Shortly after that, \citep{marquardt2007multivariate} defined multivariate CARMA processes and thereby made it possible to model a set of dependent time series jointly by a single continuous-time linear process. Further 
developments of the concept led to fractionally integrated CARMA (FICARMA, \citep{brockwell2005lda,marquardt2007mficarma}) and superpositions of CARMA (supCARMA, \citep{barndorff2011multivariate}) processes, both allowing for long-memory effects. In many contexts continuous-time processes are particularly suitable for stochastic modelling because they allow for irregularly-spaced observations and high-frequency sampling. We refer the reader to \citep{barndorff2011nongaussian,benth2009dynamic,todorov2006simulation} for an overview of successful applications of CARMA processes in economics and mathematical finance.

Despite the growing interest of practitioners in using CARMA processes as stochastic models for observed time series, the statistical theory for such processes has received little attention in the past. One of the basic questions with regard to parameter inference or model selection is how to determine which particular member of a class of stochastic models best describes the characteristic statistical properties of an observed time series. If one decides to model a phenomenon by a CARMA process as in \cref{eq-DefMCARMAheu}, which can often be argued to be a reasonable choice of model class, this problem reduces to the three tasks of choosing suitable integers $p$, $q$ describing the order of the process; estimating the coefficient matrices $A_i$, $B_j$; and suggesting an appropriate model for the driving L\'evy process $\Lb$.

In this paper, we address the last of these three problems and develop a method to estimate a parametric model for the driving L\'evy process of a multivariate CARMA process, building on an idea suggested in \citep{brockwell2010estimation} for the special case of a univariate CARMA process of order $(2,1)$. The strategy is to observe that the distribution of a L\'evy process $\Lb$ is uniquely determined by the distribution of the unit increments $\Delta\Lb_n=\Lb(n)-\Lb(n-1)$; if one therefore had access to the increments $(\Delta\Lb_n)_{n=1,\ldots,N}$ over a sufficiently long time-horizon, one could easily estimate a model for $\Lb$ by any of several well-established methods, including parametric as well as non-parametric approaches (\citep{figueroa2009nonparametric,gugushvili2009nonparametric} and references therein). It is thus natural to try and express the increments of the driving L\'evy process -- at least approximately -- in terms of the observed values of the CARMA process and subject this 
approximate sample from the unit-increment distribution to the same estimation method one would use with the true sample. One difficulty arising in this step is that one usually does not observe a CARMA processes continuously but that one instead only has access to its values on a discrete, yet possibly very fine, time grid; in fact, as we shall see in \cref{section-recovery}, it is this assumption of discrete-time observations that prevents us from exactly recovering the increments of the L\'evy process from the recorded CARMA process. 

In this paper, we concentrate on the parametric generalized moment estimators (see, e.g., \citep{hansen1982,newey1994largesample}) and prove that the estimate based on the reconstructed increments of $\Lb$ has the same asymptotic distribution as the estimate based on the true increments, provided that both the length $N$ of the observation period and the sampling frequency $h^{-1}$, at which the CARMA process is recorded, go to infinity at the right rate. In fact we obtain the quantitative criterion that $h=h_N$ must be chosen dependent on $N$ such that $Nh_N$ converges to zero as $N$ tends to infinity. The generalized method of moments (GMM) estimators contain as special cases the classical maximum likelihood estimators as well as non-linear least squares estimators that are based on fitting the empirical characteristic function of the observed sample to its theoretical counterpart. In view of the structure of the L\'evy--Khintchine formula, the latter method is particularly suited for the estimation of L\'
evy processes. We impose no assumptions on the driving L\'evy process except for the finiteness of certain moments that depend on the particular moment function used in the GMM approach. In our main result, \cref{theorem-GMMLevy}, we prove the consistency and asymptotic normality of a wide class of GMM estimators that satisfy a set of mild standard technical assumptions. 

For some recent results about the estimation of discretely observed diffusion processes with jumps we refer the reader to \citep{ngo2010parametric,ogihara2011quasi, shimizu2006estimation} and their references. The theory developed in these papers is not applicable to the problem of estimating the driving L\'evy process of a discretely observed MCARMA process because MCARMA processes are not, in general, diffusions.

It seems possible to relax the assumption of uniform sampling as long as the maximal distance between two recording times in the observation interval tends to zero. More important, however, is the natural question if there exist methods to estimate the driving L\'evy process of a CARMA process that do not require high frequency sampling but still have desirable asymptotic properties. Another interesting topic for further investigation is the behaviour of non-parametric estimators for the driving L\'evy process if they are used with a disturbed sample of the unit increments as described in this paper.

\subsection{Outline of the paper}
The paper is structured as follows. In \cref{section-MLP} we take a closer look at multivariate L\'evy processes and infinitely divisible distributions, the fundamental ingredients in the definition of a multivariate CARMA process. First, we briefly review their definition and some important basic properties. In \cref{sec-absolutemoments} we obtain a new quantitative bound for the absolute moments of an infinitely divisible distribution in terms of its characteristic triplet, which is essential for many of the subsequent proofs. We also derive the exact polynomial time-dependence of the absolute moments of a L\'evy process in \cref{prop-levymoments}. As a further preparation for the proofs of our main results, \cref{theorem-fubini} in \cref{sec-fubini} establishes a Fubini-type result for double integrals with respect to a L\'evy process over an unbounded domain.

The definition of multivariate CARMA processes as well as important properties, such as moments, mixing and smoothness of sample paths, are presented in \cref{section-MCARMA}. In \cref{theorem-AlternativeSSR}, we prove an alternative state space representation for multivariate CARMA processes, called the controller canonical form, which lends itself more easily to the estimation of the driving L\'evy process than the original definition.

In \cref{section-recovery} we show that, conditional on an initial value, whose influence decays exponentially, one can exactly recover the value of the driving L\'evy process from a continuous record of the multivariate CARMA process. The functional dependence is explicit and given in \cref{theorem-recoveryDeltaL}.

Since such a continuous record is usually not available, \cref{section-DTestimation} is devoted to discretizing the result found in \cref{theorem-recoveryDeltaL}. To this end, we analyse how pathwise derivatives and definite integrals of L\'evy-driven CARMA processes can be approximated from observations on a discrete time grid, and we determine the asymptotic behaviour of these approximations as the mesh size tends to zero. To our knowledge, this is the first time that numerical differentiation and integration schemes are investigated quantitatively for this class of stochastic processes. The results of this section are summarized in \cref{theorem-Levyapprox}.

In \cref{sec-GMM}, we prove consistency and asymptotic normality of the generalized method of moments estimator when the sample is not i.i.d$.$ but instead disturbed by a noise sequence, which corresponds to the discretization error from the previous section. \Cref{theorem-GMMnew} shows that if the sampling frequency $h_N^{-1}$ goes to infinity fast enough with the length $N$ of the observation interval, such that $N h_N$ converges to zero, then the effect of the discretization becomes asymptotically negligible and the limiting distribution of the estimated parameter is identical to the one obtained from an unperturbed sample. Finally, in \cref{theorem-GMMLevy}, we apply this result to give an answer to the question of how to estimate a parametric model of the driving L\'evy process of a multivariate CARMA process if high-frequency observations are available.

Finally, we present the results of a simulation study for a Gamma-driven CARMA(3,1) process in \cref{section-simulation}.

\Cref{appendix} contains auxiliary results and some technical proofs that complement the presentation of our results in the main part of the paper.
\subsection{Notation}
Throughout the paper we use the following notation. The natural, real, complex numbers and the integers are denoted by $\N$, $\R$, $\C$ and $\Z$, respectively. Vectors in $\R^m$ are printed in bold, and we use superscripts to denote the components of a vector, e.g., $\R^m\ni\bx=(x^1,\ldots,x^m)$. We write $\bzero_m$ for the zero vector in $\R^m$, and we let $\left\|\cdot\right\|$ and $\langle\cdot\rangle$ represent the Euclidean norm and inner product, respectively. The ring of polynomial expressions in $z$ over a ring $\K$ is denoted by $\K[z]$. The symbols $M_{m,n}(\K)$, or $M_m(\K)$ if $m=n$, stand for the space of $m\times n$ matrices with entries in $\K$. The transpose of a matrix $A$ is written as $A^T$, and $\I_m$ and $0_m$ denote the identity and the zero element in $M_m(\K)$, respectively. The symbol $\left\|\cdot\right\|$ is also used for the operator norm on $M_{m,n}(\R)$ induced by the Euclidean vector norm. For any topological space $X$, the symbol $\mathscr{B}(X)$ denotes the Borel $\sigma$-
algebra on $X$. We frequently use the following Landau notation: for two functions $f$ and $g$ defined on the interval $[0,1]$ we write $f(h)=O\left(g(h)\right)$ if there exists a constant $C$ such that $\left\|f(h)\right\|\leq C g(h)$ for all $h<1$. We use the notation $\left\|\cdot\right\|_{L^p}$ for the norm on the classical $L^p$ spaces. The symbol $\lambda$ stands for the Lebesgue measure, and the indicator function of a set $B$ is denoted by $I_B(\cdot)$, defined to be one if the argument lies in $B$ and zero otherwise. We write $\convp$ and $\convd$ for convergence in probability and convergence in distribution, respectively, and use the symbol $\eqd$ to denote equality in distribution of two random variables. For a positive real number $\alpha$, we write $(\alpha)_0$ for the smallest even integer greater than or equal to $\alpha$.

Throughout the paper, the symbol $h$ denotes a sampling interval or, equivalently, the inverse of the sampling frequency at which a continuous-time process is recorded. $N$ is the length of the observation horizon and thus also the number of unit increments of the the L\'evy process that can be reconstructed from observing the MCARMA process over that period. $N$ is \emph{not} the total number of observations, which is $N/h$.

\section{L\'evy processes and infinitely divisible distributions}
\label{section-MLP}
\subsection{Definition and L\'evy--It\^o decomposition}
L\'evy processes are the main ingredient in the definition of  a multivariate CARMA process and an important object of study in this paper. In this section we review their definition and some elementary properties. A detailed account can be found in \citep{applebaum2004lpa,sato1991lpa}.
\begin{definition}
\label{Def-LevyProcess}
A (one-sided) $\R^m$-valued {\em L\'evy process} $\left(\Lb(t)\right)_{t\geq 0}$ is a stochastic process, defined on the probability space $(\Omega,\mathscr{F},\Pb)$, with stationary, independent increments, continuous in probability and satisfying $\Lb(0)=\bzero_m$ almost surely.
\end{definition}
Every $\R^m$-valued L\'evy process $\left(\Lb(t)\right)_{t\geq 0}$ can without loss of generality be assumed to be c\`adl\`ag, which means that the sample paths are right-continuous and have left limits; it is completely characterized by its characteristic function in the L\'evy--Khintchine form $\E \ee^{\ii\langle\bu,\Lb(t)\rangle}=\exp\{t\psi^{\Lb}(\bu)\}$, $\bu\in\R^m$, $t\geq 0$, where $\psi^{\Lb}$ has the special form
\begin{equation}
\label{eq-characfunc}
 \psi^{\Lb}(\bu)=\ii\langle \bgammaL,\bu\rangle-\frac{1}{2}\langle \bu,\Sigma^{\mathcal{G}}\bu\rangle+\int_{\R^m}{\left[\ee^{\ii\langle\bu,\bx\rangle}-1-\ii\langle\bu,\bx\rangle I_{\{||x||\leq 1\}}\right]\nuL (\dd\bx)}.
\end{equation}
The vector $\bgammaL\in\R^m$ is called the {\it drift}, the non-negative definite, symmetric $m\times m$ matrix $\Sigma^{\mathcal{G}}$ is the {\it Gaussian covariance matrix} and $\nuL $ is a measure on $\R^m$, referred to as the {\it L\'evy measure}, satisfying
\begin{equation*}
\nuL (\{\bzero_m\})=0,\quad \int_{\R^m}\min(||\bx||^2,1)\nuL (\dd\bx)<\infty.
\end{equation*}
Put differently, for every $t\geq 0$, the distribution of $\Lb(t)$ is infinitely divisible with characteristic triplet $(t\bgamma,t\Sigma^{\mathcal{G}},t\nuL)$. By the L\'evy--It\^o decomposition the paths of $\Lb$ can be decomposed almost surely into a Brownian motion with drift, a compound Poisson process and a purely discontinuous $L^2$-martingale according to
\begin{equation*}
\label{eq-LevyItodecomposition}
\Lb(t) = \bgamma t + \Sigma^{\mathcal{G},1/2}\W_t + \int_{\left\|\bx\right\|\geq 1}\int_0^t{\bx N(\dd s,\dd\bx)}+\lim_{\varepsilon\searrow 0}\int_{\varepsilon\leq\left\|\bx\right\|\leq 1}\int_0^t{\bx \tilde N(\dd s,\dd\bx)},
\end{equation*}
where $\W$ is a standard $m$-dimensional Wiener process and $\Sigma^{\mathcal{G},1/2}$ is the unique positive semidefinite matrix square root of $\Sigma^{\mathcal{G}}$. The measure $N$ is a Poisson random measure on $\R\times\R^m\backslash\{\bzero_m\}$, independent of $\W$ with intensity measure $\lambda\otimes\nuL$ describing the jumps of $\Lb$. More precisely, for any measurable set $B\in\mathscr{B}(\R\times\R^m\backslash\{\bzero_m\})$,
\begin{equation*}
N(B) = \#\left\{s\geq 0: \left(s,\Lb(s)-\Lb(s-)\right)\in B\right\},\quad \Lb(s-)\coloneqq\lim_{t\nearrow s}\Lb(t).
\end{equation*}
Finally, $\tilde N$ is the compensated jump measure defined by $\tilde N(\dd s,\dd\bx) = N(\dd s,\dd\bx)-\dd s\nuL(\dd\bx)$. We will work with two-sided L\'evy processes $\Lb=\left(\Lb(t)\right)_{t\in\R}$. These are obtained from two independent copies $\left(\Lb_1(t)\right)_{t\geq 0}$, $\left(\Lb_2(t)\right)_{t\geq 0}$ of a one-sided L\'evy process via the construction
\begin{equation*}
\Lb(t)=\begin{cases}
        \Lb_1(t), & t\geq 0, \\
	-\Lb_2(-t-), & t<0.
       \end{cases}
\end{equation*}

In the following we present some elementary facts about stochastic integrals with respect to L\'evy processes, which we will use later. Comprehensive accounts of this wide field are given in the textbooks \citep{applebaum2004lpa,protter1990sia}. Let $f:\R\to M_{d,m}(\R)$ be a Lebesgue measurable, square-integrable function. Under the condition that $\Lb(1)$ has finite second moments, the stochastic integral
\begin{equation*}
I = \int_{\R}f(s)\dd\Lb(s)
\end{equation*}
exists in $L^2(\Omega,\Pb)$. Moreover, the distribution of the random variable $I$ is infinitely divisible with characteristic triplet $(\bgamma_f,\Sigma_f,\nu_f)$ which can be expressed explicitly in terms of the characteristic triplet of $\Lb$ via the formulas (\citep[Theorem 2.7]{rajput1989spectral})
\begin{subequations}
\label[pluralequation]{eq-transformtriplet}
\begin{align}
\label{eq-transformtripletgamma}\bgamma_f =& \int_{\R}f(s)\left[\bgammaL +\int_{\R^d}{\bx\left(I_{[0,1]}(\left\|f(s)\bx\right\|)-I_{[0,1]}(\left\|\bx\right\|)\right)\nuL(\dd\bx)}\right]\dd s,\\
\label{eq-transformtripletsigma}\Sigma_f =& \int_{\R}{f(s)\Sigma^{\mathcal{G}}f(s)^T\dd s},\\ 
\label{eq-transformtripletnu}\nu_f(B) =& \int_{\R}\int_{\R^m}{I_B(f(s)\bx)\nuL(\dd\bx)\dd s},\quad B\in \mathscr{B}(\R^d\backslash\{\bzero_d\}).
\end{align}
\end{subequations}

\subsection[Absolute moments of infinitely divisible distributions]{Bounds for the absolute moments of infinitely divisible distributions and L\'evy processes}
\label{sec-absolutemoments}
In this short section we derive some bounds for the absolute moments of multivariate infinitely divisible distributions and L\'evy processes which will turn out to be essential for the proofs of our main results later. It is well known that the $k$th absolute moment of an infinitely divisible random variable $X$ with characteristic triplet $(\bgamma, \Sigma,\nu)$ is finite if and only if the measure $\nu$, restricted to $\left\{\left\|\bx\right\|\geq 1\right\}$, has a finite $k$th absolute moment. We need the following stronger result, which establishes a quantitative bound for the absolute moments of an infinitely divisible distribution in terms of its characteristic triplet. 
\begin{lemma}
\label{lemma-momentsidrv}
Let $X$ be an infinitely divisible, $\R^m$-valued random variable with characteristic triplet $(\bgamma,\Sigma,\nu)$ and let $k$ be a positive even integer. Assume that the constants $c_i,C_i$, $i=1,2$, satisfy
\begin{subequations}
\begin{align}
\label{eq-momentsIDassum0}
\int_{\left\|\bx\right\|< 1}\left\|\bx\right\|^r\nu(\dd\bx)\leq C_0 c_0^r,\quad r=2,\ldots,k,\\
\label{eq-momentsIDassum1}\int_{\left\|\bx\right\|\geq 1}\left\|\bx\right\|^r\nu(\dd\bx)\leq C_1 c_1^r,\quad r=1,\ldots,k.
\end{align}
\end{subequations}
Then there exists a constant $C>0$, depending on $m$ and $k$, but not on $(\bgamma,\Sigma,\nu)$, such that
\begin{equation}
\label{eq-momentsID}
\E\left\|X\right\|^k\leq C\left[\left\|\bgamma\right\|^k+\left\|\Sigma\right\|^{k/2} + c_0^k + c_1^k\right].
\end{equation}
\end{lemma}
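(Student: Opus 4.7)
I would realize $X$ as $\Lb(1)$ for a L\'evy process with triplet $(\bgamma,\Sigma,\nu)$ and use the L\'evy--It\^o decomposition at $t=1$ to write
\begin{equation*}
X \eqd \bgamma + G + J_0 + J_1,
\end{equation*}
with $G=\Sigma^{\mathcal{G},1/2}\W(1)$ the Gaussian part, $J_0 = \int_0^1\!\int_{\|\bx\|<1}\bx\,\tilde N(\dd s,\dd\bx)$ the compensated small-jump $L^2$-martingale, and $J_1 = \int_0^1\!\int_{\|\bx\|\geq 1}\bx\, N(\dd s,\dd\bx)$ the compound Poisson big-jump piece, these three random summands being mutually independent. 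The power-mean inequality $\|a+b+c+d\|^k\leq 4^{k-1}(\|a\|^k+\|b\|^k+\|c\|^k+\|d\|^k)$ then reduces the proof to matching each of the four summands on the right-hand side of \eqref{eq-momentsID} with a bound for the corresponding piece: the deterministic $\|\bgamma\|^k$ term is trivial, and I will show $\E\|G\|^k\leq C\|\Sigma\|^{k/2}$, $\E\|J_0\|^k\leq Cc_0^k$, and $\E\|J_1\|^k\leq Cc_1^k$.

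\textbf{Bounds for the three random pieces.} Since $\|G\|\leq\|\Sigma\|^{1/2}\|\W(1)\|$, the Gaussian bound is immediate with $\E\|\W(1)\|^k$ a constant depending only on $m$ and $k$. For the two jump pieces I would invoke Kunita's first inequality for stochastic integrals against a compensated Poisson random measure (see, e.g., \citep[Theorem~4.4.23]{applebaum2004lpa}), which states that for even $k\geq 2$ and a deterministic integrand $f$,
\begin{equation*}
\E\left\|\int_0^1\!\!\int f(\bx)\,\tilde N(\dd s,\dd\bx)\right\|^k \leq D_{k,m}\left[\left(\int\|f(\bx)\|^2\,\nu(\dd\bx)\right)^{k/2}+\int\|f(\bx)\|^k\,\nu(\dd\bx)\right].
\end{equation*}
Taking $f(\bx)=\bx\, I_{\{\|\bx\|<1\}}$ and substituting assumption \eqref{eq-momentsIDassum0} for $r=2$ and $r=k$ yields $\E\|J_0\|^k\leq C'c_0^k$. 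To control $J_1$, which is not itself a martingale, I would split it as
\begin{equation*}
J_1 = \int_0^1\!\!\int_{\|\bx\|\geq 1}\bx\,\tilde N(\dd s,\dd\bx) + \int_{\|\bx\|\geq 1}\bx\,\nu(\dd\bx);
\end{equation*}
Kunita's inequality together with \eqref{eq-momentsIDassum1} for $r=2,k$ controls the compensated part by $c_1^k$, while the drift term has Euclidean norm at most $C_1 c_1$ by \eqref{eq-momentsIDassum1} with $r=1$, contributing $(C_1 c_1)^k$ to its $k$th power.

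\textbf{Main obstacle.} The delicate point is the treatment of $J_1$: a direct compound-Poisson analysis using $J_1\eqd\sum_{i=1}^N Y_i$, with $N\sim\mathrm{Poisson}(\nu(\{\|\bx\|\geq 1\}))$ and $Y_i$ i.i.d.\ with law $\nu(\cdot\cap\{\|\bx\|\geq 1\})/\nu(\{\|\bx\|\geq 1\})$, does \emph{not} give the desired estimate, because the moments $\E N^k$ are polynomials in $\nu(\{\|\bx\|\geq 1\})$ of degree $k$ and this total mass need not be controlled by a polynomial of the correct degree in $c_1$. Centering $J_1$ first is therefore essential: only after passing to the compensated martingale does Kunita's inequality control the stochastic part through the $L^2$-quantity $\int\|\bx\|^2\nu(\dd\bx)$, which by \eqref{eq-momentsIDassum1} scales correctly as $c_1^2$. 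Summing the four contributions and absorbing the various constants depending on $m$, $k$, $C_0$, and $C_1$ into a single $C$ then yields the bound \eqref{eq-momentsID}.
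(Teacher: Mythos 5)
Your proof is correct, but it follows a genuinely different route from the paper's. The paper works entirely on the level of characteristic functions: it splits $X\eqd\bgamma+\Sigma^{1/2}\W+X_0+X_1$ with $X_0,X_1$ infinitely divisible with triplets $(\bzero_m,0_m,\nu|_{\{\|\bx\|<1\}})$ and $(\bzero_m,0_m,\nu|_{\{\|\bx\|\geq1\}})$, differentiates $\widehat{\mu_0}$ and $\widehat{\mu_1}$ $k$ times at the origin, and bounds the resulting moment--cumulant (partition) expansion $\E(X_1^i)^k=\sum_{\pi\in\mathcal{P}_k}\prod_{B\in\pi}\int(x^i)^{|B|}\nu(\dd\bx)$ termwise, using the hypotheses for \emph{every} $r$ in the stated ranges; notably it handles the big-jump part \emph{without} centering, since a singleton block contributes $\int_{\|\bx\|\geq1}\|\bx\|\nu(\dd\bx)\leq C_1c_1$ and each partition still scales as $c_1^k$ because $\sum_{B\in\pi}|B|=k$. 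You instead use the pathwise L\'evy--It\^o decomposition and Kunita's first inequality for the two jump integrals, centering the large jumps first. Both arguments are sound; yours is shorter, needs only the hypotheses for $r\in\{1,2,k\}$ rather than the full ranges, but imports a nontrivial martingale (Burkholder-type) inequality, whereas the paper's computation is elementary and yields a fully explicit constant. One small caveat on your ``main obstacle'' remark: the direct compound-Poisson computation is not actually doomed --- the total mass satisfies $\nu(\{\|\bx\|\geq1\})\leq C_1c_1$, and if one expands via factorial moments of the Poisson count one recovers exactly the partition sum $c_1^k\sum_\pi C_1^{\#\pi}$ (which is in effect what the paper does); what fails is only the crude bound through $\E N^k$ times $\E\|Y_1\|^k$. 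So centering is a convenience for your route, not a logical necessity for the lemma.
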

\begin{proof}
Denote by $\nu_0=\nu|_{\{\left\|\bx\right\|<1\}}$ and $\nu_1=\nu|_{\{\left\|\bx\right\|\geq 1\}}$ the restrictions of the measure $\nu$ to the unit ball of $\R^m$ and its complement, respectively. It follows from the L\'evy--Khintchine formula \labelcref{eq-characfunc} that we can construct a standard normal random variable $\W$ and two infinitely divisible random variables  $X_0$, $X_1$, with characteristic triplets $(\bzero_m,0_m,\nu_0)$, $(\bzero_m,0_m,\nu_1)$, and distributions $\mu_0$, $\mu_1$, respectively, such that $X\eqd \bgamma+\Sigma^{1/2}\W+X_0+X_1$. Using the notation $n!!$ for the double factorial of the natural number $n$ as well as \citep[Eq. (4.20)]{bauer2002wahrscheinlich} for the absolute moments of a standard normal random variable, the $k$th absolute moment of the Gaussian part is readily estimated as
\begin{equation*}
\E\left\|\Sigma^{1/2}\W\right\|^k \leq \left\|\Sigma\right\|^{k/2}\E\left\|\W\right\|^k\leq \left\|\Sigma\right\|^{k/2} \E\left(\sum_{i=1}^m\left|W^i\right|\right)^k \leq \left\|\Sigma\right\|^{k/2} m^{k+1}\E\left|W^1\right|^k\leq (k-1)!!\left\|\Sigma\right\|^{k/2} m^{k+1},
\end{equation*}
which implies that
\begin{equation}
\label{eq-momentsIDdecomp}
\E\left\|X\right\|^k \leq 4^k\left[\left\|\bgamma\right\|^k + m^{k+1}(k-1)!!\left\|\Sigma\right\|^{k/2} + \E\left\|X_0\right\|^k + \E\left\|X_1\right\|^k\right].
\end{equation}
The first two terms in this sum are already of the form asserted in \cref{eq-momentsID}. We next consider the fourth term. By construction, the characteristic function of $X_1$ is given by
\begin{equation*}
\widehat{\mu_1}(\bu)\coloneqq\E \ee^{\ii\langle\bu,X_1\rangle} = \exp\left\{\int_{\left\|\bx\right\|\geq1}\left[\ee^{\ii\langle\bu,\bx\rangle}-1\right]\nu(\dd\bx)\right\},\quad\bu\in\R^m.
\end{equation*}
By assumption \labelcref{eq-momentsIDassum1} and \citep[Corollary 25.8]{sato1991lpa}, $\int\left\|\bx\right\|^k\mu_1(\dd\bx)<\infty$ and \citep[Proposition 2.5(ix)]{sato1991lpa} shows that the mixed moments of $X_1$ of order $k$ are given by
\begin{equation*}
\E\left(X_1^{i_1}\cdot\ldots\cdot X_1^{i_k}\right) = \int_{\R^m}{x^{i_1}\cdot\ldots\cdot x^{i_k}\mu_1(\dd\bx)} = \frac{1}{i^k}\left.\frac{\ppartial^k}{\ppartial u^{i_1}\cdot\ldots\cdot\ppartial u^{i_k}}\widehat{\mu_1}(\bu)\right|_{\bu=\bzero_m},\quad i_j=1,\ldots,m.
\end{equation*}
It is easy to see by induction that
\begin{equation*}
\frac{\ppartial^k}{\ppartial u^{i_1}\cdot\ldots\cdot\ppartial u^{i_k}}\widehat{\mu_1}(\bu) = \left[\widehat{\mu_1}(\bu)\right]^ki^k\sum_{\pi\in\mathcal{P}_k}\prod_{B\in\pi}\int_{\left\|\bx\right\|\geq1}{\left[\prod_{j\in B}x^{i_j}\right]\ee^{\ii\langle\bu,\bx\rangle}\nu(\dd\bx)},
\end{equation*}
where $\mathcal{P}_k$ denotes the set of partitions of $\{1,2,\ldots,k\}$, a partition being a subset of the power set of $\{1,\ldots,k\}$ with pairwise disjoint elements such that their union is equal to $\{1,\ldots,k\}$. We write $\#\pi$ for the number of sets in a partition $\pi$ and $|B|$ for the number of elements in such a set. Setting $\bu=\bzero_m$, specializing to $i_j=i$ and making use of the assumption that $k$ is even, the last display yields the explicit formula
\begin{equation*}
\E\left|X_1^i\right|^k=\E\left(X_1^i\right)^k=\sum_{\pi\in\mathcal{P}_k}\prod_{B\in\pi}\int \left(x^i\right)^{|B|}\nu(\dd\bx),\quad i=1,\ldots,m.
\end{equation*}
Using the fact that $x^i\leq\left\|\bx\right\|$ for every $\bx\in\R^m$ as well as assumption \labelcref{eq-momentsIDassum1} we thus obtain that
\begin{equation}
\label{eq-momentsIDX1}
\E\left\|X_1\right\|^k \leq m^{k/2}\sum_{i=1}^m\E\left|X_1^{i}\right|^k\leq m^{k/2+1}\sum_{\pi\in\mathcal{P}_k}\prod_{B\in\pi}\int_{\left\|\bx\right\|\geq1}\left\|x\right\|^{|B|}\nu(\dd\bx)\leq c_1^km^{k/2+1}\sum_{\pi\in\mathcal{P}_k}C_1^{\#\pi}.
\end{equation}
The third term in \cref{eq-momentsIDdecomp} can be analysed similarly: the characteristic function of $X_0$ has the form
\begin{equation*}
\widehat{\mu_0}(\bu)\coloneqq\E \ee^{\ii\langle\bu,X_0\rangle} = \exp\left\{\int_{\left\|\bx\right\|< 1}\left[\ee^{\ii\langle\bu,\bx\rangle}-1-\ii\langle\bu,\bx\rangle\right]\nu(\dd\bx)\right\},\quad\bu\in\R^m.
\end{equation*}
With $\nu_0$ having bounded support, all moments of $X_0$ are finite, which implies that $\widehat{\mu_0}$ is infinitely often differentiable and that the mixed moments of $X_0$ are given by partial derivatives of $\widehat{\mu_0}$, as before. The additional compensatory term $\ii\langle\bu,\bx\rangle$ in the integral ensures that the first derivative of $\widehat{\mu_0}$ vanishes at zero, which leads to
\begin{equation*}
\E \left|X_0^i\right|^k=\E \left(X_0^i\right)^k=\sum_{\substack{\pi\in\mathcal{P}_k\\\min\{|B|,B\in\pi\}\geq 2}}\prod_{B\in\pi}\int_{\left\|\bx\right\|< 1}\left(x^i\right)^{|B|}\nu(\dd\bx),\quad i=1,\ldots,m.
\end{equation*}
Using assumption \labelcref{eq-momentsIDassum0} we can thus estimate
\begin{equation}
\label{eq-momentsIDX0}
\E\left\|X_0\right\|^k \leq m^{k/2}\sum_{i=1}^m\E\left|X_0^{i}\right|^k\leq m^{k/2+1}\sum_{\substack{\pi\in\mathcal{P}_k\\\min\{|B|,B\in\pi\}\geq 2}}\prod_{B\in\pi}\int_{\left\|\bx\right\|<1}\left\|x\right\|^{|B|}\nu(\dd\bx)\leq c_0^km^{k/2+1}\sum_{\substack{\pi\in\mathcal{P}_k\\\min\{|B|,B\in\pi\}\geq 2}}C_0^{\#\pi}.
\end{equation}
The bounds \labelcref{eq-momentsIDdecomp,eq-momentsIDX1,eq-momentsIDX0} show that the claim \labelcref{eq-momentsID} holds with
\begin{equation*}
C\coloneqq 4^k\left[m^{k+1}(k-1)!!+m^{k/2+1}\left(\sum_{\pi\in\mathcal{P}_k}C_1^{\#\pi}+\sum_{\substack{\pi\in\mathcal{P}_k\\\min\{|B|,B\in\pi\}\geq 2}}C_0^{\#\pi}\right)\right].\qedhere
\end{equation*}
\end{proof}
Since the marginal distributions of a L\'evy process $\Lb$ are infinitely divisible, the behaviour of their moments can be analysed by the previous \cref{lemma-momentsidrv}. We prefer, however, to give an exact description of the time-dependence of $\E\left\|\Lb(t)\right\|^k$ for even exponents $k$ and derive from that the asymptotic behaviour as $t$ tends to zero.

\begin{proposition}
\label{prop-levymoments}
Let $k$ be a positive real number and $\Lb$ be a L\'evy process.
\begin{enumerate}[i)]
\item\label{prop-levymoments-even} If $k$ is an even integer and $\E\left\|\Lb(1)\right\|^k$ is finite, then there exist real numbers $m_1,\ldots,m_k$ such that
\begin{equation}
\label{eq-Levypprocessmoments}
\E\left\|\Lb(t)\right\|^k=m_1 t+\ldots + m_k t^k,\quad t\geq 0.
\end{equation}
\item\label{prop-levymoments-general} If $\E\left\|\Lb(1)\right\|^{(k)_0}$ is finite, then $\E\left\|\Lb(h)\right\|^k=O(h^{k/(k)_0})$ as $h\to 0$.
\end{enumerate}
\end{proposition}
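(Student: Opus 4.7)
The plan is to treat part (i) first because part (ii) follows from it by a Lyapunov-type inequality.

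For part (i), I would exploit that since $k$ is an even integer, $\|\Lb(t)\|^k=\bigl(\sum_{i=1}^m(L^i(t))^2\bigr)^{k/2}$ is a polynomial of total degree $k$ in the components $L^i(t)$. Therefore $\E\|\Lb(t)\|^k$ is a finite linear combination of mixed moments $\E\bigl(L^{i_1}(t)\cdots L^{i_k}(t)\bigr)$, all of which exist by assumption. So it suffices to show that each such mixed moment is a polynomial in $t$ of degree at most $k$ with no constant term. For this I would use the relation between moments and cumulants: the joint characteristic function of $\Lb(t)$ is $\exp(t\psi^{\Lb}(\bu))$, so the joint cumulant generating function is linear in $t$, and every mixed cumulant of $\Lb(t)$ of order $r\leq k$ is of the form $t\kappa_r$ for some constant $\kappa_r$ depending only on $\Lb(1)$. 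Differentiating the identity $\widehat{\mu_t}(\bu)=\exp(t\psi^{\Lb}(\bu))$ and setting $\bu=\bzero_m$ (as was done in the proof of Lemma~\ref{lemma-momentsidrv} for the purely discontinuous component), the $k$th mixed moment can be written as a sum over set partitions $\pi\in\mathcal{P}_k$ of a product of $\#\pi$ cumulants, hence as a polynomial in $t$ of degree $\#\pi\leq k$. Since $\Lb(0)=\bzero_m$ almost surely, the constant term vanishes, giving the claimed expansion $\E\|\Lb(t)\|^k=m_1 t+\cdots+m_k t^k$.

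For part (ii), let $\ell=(k)_0$, the smallest even integer with $\ell\geq k$. Finiteness of $\E\|\Lb(1)\|^\ell$ together with part (i) applied with exponent $\ell$ yields
\begin{equation*}
\E\|\Lb(h)\|^\ell = m_1 h+m_2 h^2+\cdots + m_\ell h^\ell = O(h)\quad\text{as } h\to 0,
\end{equation*}
since the right-hand side has no constant term. By the Lyapunov inequality (monotonicity of $L^p$-norms), for $h\leq 1$,
\begin{equation*}
\E\|\Lb(h)\|^k \leq \bigl(\E\|\Lb(h)\|^\ell\bigr)^{k/\ell}=O(h^{k/\ell})=O\bigl(h^{k/(k)_0}\bigr),
\end{equation*}
which is the desired conclusion.

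The main step requiring care is the cumulant-to-moment argument in part (i). The combinatorial identity expressing joint moments as sums over partitions of products of joint cumulants is classical, but I would verify it in this setting by the same induction on the order of differentiation used in the proof of Lemma~\ref{lemma-momentsidrv}, together with the observation that every factor $\partial^{|B|}\psi^{\Lb}(\bu)/\partial u^{i_{j_1}}\cdots\partial u^{i_{j_{|B|}}}\bigl|_{\bu=\bzero_m}$ is constant in $t$ while each differentiation of $\exp(t\psi^{\Lb}(\bu))$ brings down a factor of $t$ once all derivatives have been collected. Finiteness of these partial derivatives at the origin follows from $\E\|\Lb(1)\|^k<\infty$, making the cumulant argument rigorous. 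Everything else is then bookkeeping.
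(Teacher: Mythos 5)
Your proposal is correct and follows essentially the same route as the paper's proof: reduce $\E\|\Lb(t)\|^k$ via the multinomial theorem to mixed moments, express those through the moment--cumulant partition formula using that the cumulants of $\Lb(t)$ are linear in $t$ (so each partition $\pi$ contributes $t^{\#\pi}$ with $1\leq\#\pi\leq k$, whence no constant term), and then deduce part (ii) from part (i) by H\"older's/Lyapunov's inequality with exponent $(k)_0/k$. No gaps.
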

\begin{proof}
For the proof of \labelcref{prop-levymoments-even} we introduce the notation $\K\left(L^{i_1}(t),\ldots,L^{i_k}(t)\right)$, $1\leq i_1,\ldots,i_k\leq m$, for the mixed cumulants of $\Lb(t)$ of order $k$. They are defined in terms of the characteristic function of $\Lb$ as
\begin{equation*}
\K\left(L^{i_1}(t),\ldots,L^{i_k}(t)\right)= \left.\frac{\ppartial^k}{\ppartial u_{i_1}\cdots\ppartial_{u_{i_k}}}\log\E \ee^{\ii\langle\bu,\Lb(t)\rangle}\right|_{\bu=\bzero_m},
\end{equation*}
and are clearly homogeneous of degree one in $t$. There is a close combinatoric relationship between moments and cumulants, which was used implicitly in the proof of \cref{lemma-momentsidrv} and which explicitly reads (see \citep[\S 12, Theorem 6]{shiryaev1996probability}): 
\begin{equation*}
\E L^{i_1}(t)\cdot\cdots\cdot L^{i_k}(t)=\sum_{\pi\in\mathcal{P}_k}\prod_{B\in\pi}\K\left(L^{i_j}(t):j\in B\right) = \sum_{\pi\in\mathcal{P}_k}t^{\#\pi}\prod_{B\in\pi}\K\left(L^{i_j}(1):j\in B\right)=\sum_{\kappa=1}^km_{k,\kappa}^{i_1,\ldots,i_k}t^\kappa,
\end{equation*}
where
\begin{equation*}
m_{k,\kappa}^{i_1,\ldots,i_k}=\sum_{\substack{\pi\in\mathcal{P}_k\\\#\pi=\kappa}}\prod_{B\in\pi}\K\left(L^{i_j}(1):j\in B\right).
\end{equation*}
Writing $k=2l$, the Multinomial Theorem implies that
\begin{align*}
\|\Lb(t)\|^k =& \left[\left(L^1(t)\right)^2+\ldots + \left(L^m(t)\right)^2\right]^l = \sum_{\substack{0\leq l_1,\ldots,l_m\leq l\\l_1+\ldots+l_m=l}}{\frac{l!}{l_1!\cdot\cdots\cdot l_m!}\prod_{i=1}^m{\left(L^i(t)\right)^{2l_i}}}
\end{align*}
and thus it follows, by what was just shown and the linearity of expectation, that
\begin{align*}
\E\|\Lb(t)\|^k = \sum_{\kappa=1}^k{\left[\sum_{\substack{0\leq l_1,\ldots,l_m\leq l\\l_1+\ldots+l_m=l}}\frac{l!}{l_1!\cdot\cdots\cdot l_m!}m_{k,\kappa}^{\overbrace{\scriptstyle 1,\ldots,1}^{2l_1 \text{ times}},\ldots,\overbrace{\scriptstyle m,\ldots,m}^{2l_m \text{ times}}}\right]t^\kappa}.
\end{align*}
This proves \cref{eq-Levypprocessmoments}. Assertion \labelcref{prop-levymoments-general} follows for even $k$ directly from the polynomial time-dependence of $\E\left\|\Lb(t)\right\|^k$ which we have just established. For general $k$ we use H\"older's inequality which implies that
\begin{equation*}
\E\left\|\Lb(t)\right\|^k\leq \left(\E\left\|\Lb(t)\right\|^{(k)_0}\right)^{\frac{k}{(k)_0}}
\end{equation*}
and since $(k)_0$ is even by definition the claim follows again from part \labelcref{prop-levymoments-even}.
\end{proof}

\subsection[A Fubini-type theorem for stochastic integrals]{A Fubini-type theorem for stochastic integrals with respect to L\'evy processes}
\label{sec-fubini}

The next result is a Fubini-type theorem for a special class of stochastic integrals with respect to L\'evy processes over an unbounded domain.
\begin{theorem}
\label{theorem-fubini}
Let $[a,b]\subset\R$ be a bounded interval and $\Lb$ be a L\'evy process with finite second moments. Assume that $F:[a,b]\times\R\to M_{d,m}(\R)$ is a bounded function, and that the family $\{u\mapsto F(s,u)\}_{s\in[a,b]}$ is uniformly absolutely integrable and uniformly converges to zero as $|u|\to\infty$. It then holds that
\begin{equation}
\label{eq-fubini}
\int_a^b\int_\R F(s,u)\dd\Lb(u)\dd s = \int_\R\int_a^b F(s,u)\dd s\dd\Lb(u),
\end{equation}
almost surely.
\end{theorem}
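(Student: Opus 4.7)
The plan is to reduce the problem to the classical stochastic Fubini theorem on a bounded rectangle and then pass to the limit in the spatial variable. For each $T>0$, both integrands $F$ restricted to $[a,b]\times[-T,T]$ are bounded and hence satisfy the hypotheses of the standard Fubini theorem for stochastic integrals (e.g.\ the version of Kailath--Segall--Zakai \citep{kailath1978fubini} or Protter \citep[Ch.~IV]{protter1990sia}), which yields
\[
A_T := \int_a^b\!\!\int_{-T}^{T} F(s,u)\dd\Lb(u)\dd s \;=\; \int_{-T}^{T}\!\!\int_a^b F(s,u)\dd s\dd\Lb(u) =: B_T,\quad \text{a.s.}
\]
It then suffices to show that, along a sequence $T_n\to\infty$, $A_{T_n}\to \int_a^b\int_\R F\dd\Lb\dd s$ and $B_{T_n}\to \int_\R\int_a^b F\dd s\dd\Lb$ in probability (or $L^1$); uniqueness of limits delivers the a.s.\ equality.

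The key analytic ingredient is that, thanks to the finite second moments of $\Lb$, the isometry (and, after splitting off the drift $\bgammaL$, the stochastic isometry) bounds $\E\|\int_\R f\dd\Lb\|^2$ by a constant multiple of $\|f\|_{L^2}^2 + \|f\|_{L^1}^2$. Since $F$ is bounded and the family $\{F(s,\cdot)\}_{s\in[a,b]}$ is uniformly in $L^1(\R)$, it is also uniformly in $L^2(\R)$ via the interpolation $\|F(s,\cdot)\|_{L^2}^2\leq\|F\|_\infty\|F(s,\cdot)\|_{L^1}$. Consequently, for each $s$, $\int_\R F(s,u)\dd\Lb(u)$ exists in $L^2(\Omega)$, and $u\mapsto\int_a^b F(s,u)\dd s$ lies in $L^1(\R)\cap L^2(\R)$ by dominated convergence, so $\int_\R\int_a^b F\dd s\dd\Lb$ is well-defined as an $L^2$ limit.

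To control the tails I would estimate, uniformly in $s\in[a,b]$,
\[
\E\left\|\int_{|u|>T} F(s,u)\dd\Lb(u)\right\|^2 \leq C\left[\|\bgammaL\|^2\left(\int_{|u|>T}\|F(s,u)\|\dd u\right)^{\!2}+\int_{|u|>T}\|F(s,u)\|^2\dd u\right],
\]
and invoke the uniform absolute integrability to make the $L^1$-tails small, then exploit $\int_{|u|>T}\|F(s,u)\|^2 \dd u\leq\|F\|_\infty\int_{|u|>T}\|F(s,u)\|\dd u$ to handle the $L^2$-tails. Integrating over $s\in[a,b]$ and applying Cauchy--Schwarz gives $\E\|A_T-\int_a^b\int_\R F\dd\Lb\dd s\|\to 0$, so $A_T$ converges in $L^1(\Omega)$. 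The convergence $B_T\to\int_\R\int_a^b F\dd s\dd\Lb$ follows directly from the $L^2$-isometry and a dominated-convergence argument applied to $\int_a^b F(s,\cdot)\dd s\in L^2(\R)$.

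The main obstacle is not any single computation but the bookkeeping needed to ensure that the interchange of Lebesgue integration with the $L^2$-limit of stochastic integrals on the left-hand side is justified; this is precisely where the uniform absolute integrability hypothesis (rather than mere integrability for each $s$) is used, since it allows the tail estimates to be uniform in $s$ and hence survive the outer $\dd s$ integration. Once that uniformity is in hand, the remaining steps are standard applications of the $L^2$-isometry for Lévy integrals and the classical bounded-domain stochastic Fubini theorem.
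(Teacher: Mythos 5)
Your proposal is correct and follows the same architecture as the paper's proof: truncate the spatial domain, invoke the Kailath--Segall--Zakai bounded-domain stochastic Fubini theorem to get $A_T=B_T$ almost surely, show that both tails vanish in $L^2$ (uniformly in $s$ for the left-hand side, so that the bound survives the outer $\dd s$ integration via Jensen), and conclude by uniqueness of limits in probability. The one place where you genuinely diverge is the tail moment estimate: the paper computes the characteristic triplet $(\bgamma_{F,N}^s,\Sigma_{F,N}^s,\nu_{F,N}^s)$ of $\int_{|u|>N}F(s,u)\dd\Lb(u)$ from the transformation formulas \labelcref{eq-transformtriplet} and then applies \cref{lemma-momentsidrv} with $k=2$, whereas you use the $L^2$-isometry for L\'evy integrals after splitting off the deterministic part, together with the interpolation $\left\|F(s,\cdot)\right\|_{L^2}^2\leq\left\|F\right\|_\infty\left\|F(s,\cdot)\right\|_{L^1}$. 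Your route is more elementary and shorter for second moments; the paper's route reuses machinery it needs anyway for higher moments elsewhere. One small imprecision: the coefficient multiplying the $L^1$-tail in your isometry bound should be the mean $\left\|\E\Lb(1)\right\|^2=\left\|\bgammaL+\int_{\left\|\bx\right\|\geq1}\bx\,\nuL(\dd\bx)\right\|^2$ rather than $\left\|\bgammaL\right\|^2$ alone, since the truncated drift of the L\'evy--Khintchine representation is not the expectation; this does not affect the argument.
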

\begin{proof}
We first note that since $\Lb$ has finite second moments and $F$ is square-integrable, both integrals in \cref{eq-fubini} are well-defined as $L^2$-limits of approximating Riemann-Stieltjes sums. We start the proof by introducing the notations
\begin{align*}
I=\int_a^b\int_\R F(s,u)\dd\Lb(u)\dd s,\quad& I_N=\int_a^b\int_{-N}^NF(s,u)\dd\Lb(u)\dd s,\\
\overleftrightarrow{I}=\int_\R\int_a^bF(s,u)\dd s\dd\Lb(u),\quad& \overleftrightarrow{I_N}=\int_{-N}^N\int_a^bF(s,u)\dd s\dd\Lb(u).
\end{align*}
It follows from \citep[Theorem 1]{kailath1978fubini} (see also \citep[Theorem 64]{protter1990sia}) that, for each $N$, $I_N=\overleftrightarrow{I_N}$ almost surely. We also write
\begin{equation*}
\Delta_N \coloneqq I - I_N,\quad \overleftrightarrow{\Delta_N} \coloneqq \overleftrightarrow{I} - \overleftrightarrow{I_N},\quad N>0.
\end{equation*}
The strategy of the proof is to show that both $\Delta_N$ and $\overleftrightarrow{\Delta_N}$ converge to zero as $N$ tends to infinity, and then to use the uniqueness of limits to conclude that $I$ must equal $\overleftrightarrow{I}$. We first investigate $\E\left\|\Delta_N\right\|^2$. Clearly,
\begin{equation}
\label{eq-DeltaN}
\Delta_N = \int_a^b\int_{|u|>N} F(s,u)\dd\Lb(u)\dd s.
\end{equation}
Consequently, in order to analyse the absolute moments of $\Delta_N$ it suffices to consider the absolute moments of the infinite divisible random variables $\int_{|u|>N} F(s,u)\dd\Lb(u)$, $s\in[a,b]$. By \cref{eq-transformtriplet}, their characteristic triplets $(\bgamma_{F,N}^s,\Sigma_{F,N}^s,\nu_{F,N}^s)$ satisfy
\begin{align}
\label{eq-gammaFNs}
\left\|\bgamma_{F,N}^s\right\| \leq& \int_{|u|>N}\left\|F(s,u)\right\|\dd u\left\|\bgammaL\right\| +\int_{|u|>N}\left\|F(s,u)\right\|\int_{\left\|\bx\right\|<1}{\left\|\bx\right\| I_{[1,\infty)}(\left\|F(s,u)\bx\right\|)\nuL(\dd\bx)}\dd u\notag\\
  &+\int_{|u|>N}\left\|F(s,u)\right\|\int_{\left\|\bx\right\|\geq 1}{\left\|\bx\right\| I_{[0,1]}(\left\|F(s,u)\bx\right\|)\nuL(\dd\bx)}\dd u\notag\\
 \leq & \int_{|u|>N}\left\|F(s,u)\right\|\dd u\left[\left\|\bgammaL\right\|+\int_{\left\|\bx\right\|\geq 1}{\left\|\bx\right\|\nuL(\dd\bx)}\right]
\end{align}
for all $N$ exceeding some $N_0$ which satisfies $\left\|F(s,u)\right\|<1$ for all $|u|>N_0$, $s\in[a,b]$; Such an $N_0$ exists by assumption. Similarly, one obtains that
\begin{equation}
\label{eq-SigmaFNs}
\left\|\Sigma_{F,N}^s\right\|\leq\left\|\Sigma^{\mathcal{G}}\right\|\int_{|u|>N}\left\|F(s,u)\right\|^2\dd u\leq\left\|\Sigma^{\mathcal{G}}\right\|\int_{|u|>N}\left\|F(s,u)\right\|\dd u,\quad\forall N>N_0.
\end{equation}
and
\begin{align*}
\int_{\left\|\bx\right\|<1}{\left\|\bx\right\|^2\nu_{F,N}^s(\dd\bx)} =&\int_{|u|>N}\int_{\R^d}{I_{[0,1]}(\left\|F(s,u)\bx\right\|)\left\|F(s,u)\bx\right\|^2\nuL(\dd\bx)\dd u}\\
 \leq&\int_{|u|>N}\left\|F(s,u)\right\|\dd u\int_{\R^d}{\left\|\bx\right\|^2\nuL(\dd\bx)},\quad\forall N>N_0,\\
\int_{\left\|\bx\right\|\geq1}{\left\|\bx\right\|^r\nu_{F,N}^s(\dd\bx)} =&\int_{|u|>N}\int_{\R^d}{I_{[1,\infty)}(\left\|F(s,u)\bx\right\|)\left\|F(s,(u)\bx\right\|^r\nuL(\dd\bx)\dd u}\\
 \leq & \int_{|u|>N}\int_{\R^d}I_{[1,\infty]}\left(\left\|F\right\|_{L^\infty([a,b]\times\R)}\left\|\bx\right\|\right)\left\|F(s,u)\right\|^r\left\|\bx\right\|^r\nuL(\dd\bx)\dd u\\
  \leq&\int_{|u|>N}\left\|F(s,u)\right\|\dd u\int_{\left\|\bx\right\|\geq \max\{1,\left\|F\right\|_{L^\infty([a,b]\times \R)}^{-1}\}}\left\|\bx\right\|^2\nuL(\dd\bx),\quad r=1,2.
\end{align*}
Applying \cref{lemma-momentsidrv} with $k=2$ and using the assumed uniform absolute integrability of the family $\{u\mapsto F(s,u)\}_{s\in[a,b]}$ we can deduce that
\begin{equation*}
\sup_{s\in[a,b]}\E\left\|\int_{|u|>N} F(s,u)\dd\Lb(u)\right\|^2 \to 0,\quad \text{ as }N\to\infty.
\end{equation*}
Together with \cref{eq-DeltaN} and Jensen's inequality this implies that
\begin{align}
\label{eq-EDeltaN2}
\E\left\|\Delta_N\right\|^2 \leq& \E\left(\int_a^b\left\|\int_{|u|>N}F(s,u)\dd\Lb(u)\right\|\dd s\right)^2\notag\\
  \leq& \E\int_a^b\left\|\int_{|u|>N}F(s,u)\dd\Lb(u)\right\|^2\dd s\leq (b-a)\sup_{s\in[a,b]}\E\left\|\int_{|u|>N}F(s,u)\dd\Lb(u)\right\|^2\to 0,
\end{align}
as $N\to\infty$, showing that $\Delta_N$ converges to zero in $L^2$. In order to prove the same convergence also for
\begin{equation*}
\overleftrightarrow{\Delta_N} = \overleftrightarrow{I}-\overleftrightarrow{I_N} = \int_{|u|>N}\int_a^b{F(s,u)\dd s}\dd\Lb(du),
\end{equation*}
we first define the function $\widetilde F:\R\to M_{d,m}(\R)$ by $\widetilde F(u)=\int_a^bF(s,u)\dd s$. Since for all $u\in\R$, $\left\|\widetilde F(u)\right\|$ is smaller than $(b-a)\left\|F\right\|_{L^\infty([a,b]\times \R)}$, the function $\widetilde F$ is bounded. It is also integrable because the normal variant of Fubini's theorem and the assumed uniform integrability of $\{F(s,\cdot)\}_{s\in[a,b]}$ imply that
\begin{equation*}
\int_{|u|>N}\left\|\widetilde F(u)\right\|\dd u\leq \int_a^b\int_{|u|>N}\left\|F(s,u)\right\|\dd u\dd s\leq(b-a)\sup_{s\in[a,b]}\int_{|u|>N}\left\|F(s,u)\right\|\dd u \to 0,\quad N\to\infty.
\end{equation*}
Similar arguments to the ones given above then show that $\overleftrightarrow{\Delta_N}$ converges to zero in $L^2$ as well. It thus follows by the triangle inequality that, for every $N$ and every $\epsilon$,
\begin{align*}
\Pb\left(\left\|I-\overleftrightarrow{I}\right\|\geq \epsilon\right)\leq & \Pb\left(\left\{\left\|I-I_N\right\|\geq \frac{\epsilon}{2}\right\} \cup \left\{\left\|\overleftrightarrow{I}-I_N\right\|\geq \frac{\epsilon}{2}\right\}\right)\\
  \leq & \Pb\left(\left\{\left\|I-I_N\right\|\geq \frac{\epsilon}{2}\right\}\right) + \Pb\left(\left\{\left\|\overleftrightarrow{I}-\overleftrightarrow{I}_N\right\|\geq \frac{\epsilon}{2}\right\}\right),
\end{align*}
where we have used the subadditivity of $\Pb$ as well as the fact that $I_N$ is equal to $\overleftrightarrow{I}_N$ almost surely. Since $L^2$-convergence implies convergence in probability (\citep[Theorems 17.2]{jacod2003probability}), it follows that the right hand side of the last display is less than any positive $\delta$ if only $N$ is large enough and thus that the probability of the absolute difference between $I$ and $\overleftrightarrow{I}$ exceeding $\epsilon$ is equal to zero for every positive $\epsilon$. This means that $I$ equals $\overleftrightarrow{I}$ almost surely and completes the proof.
\end{proof}

\section{Controller canonical parametrization of multivariate CARMA processes}
\label{section-MCARMA}
Multivariate, continuous-time autoregressive moving average (abbreviated MCARMA) processes are the continuous-time analogue of the well known vector ARMA processes. They also generalize the much-studied univariate CARMA processes to a multidimensional setting. A $d$-dimensional MCARMA process $\Y$, specified by an autoregressive polynomial
\begin{equation}
\label{eq-ARpolynomial}
\tilde P(z)=z^{\tilde p}+\tilde A_1 z^{\tilde p-1}+\ldots+\tilde A_{\tilde p}\in M_d(\R[z]),
\end{equation}
a moving average polynomial
\begin{equation}
\label{eq-MApolynomial}
 \tilde Q(z)=\tilde B_0+\tilde B_1z+\ldots+\tilde B_{\tilde q}z^{\tilde q}\in M_{d,m}(\R[z]),
\end{equation}
and driven by an $m$-dimensional L\'evy process $\Lb$ is defined as a solution of the formal differential equation
\begin{equation}
\label{eq-MCARMA-ODE}
\tilde P(\DD)\Y(t)=\tilde Q(\DD)\DD\Lb(t),\quad \DD=\frac{\dd}{\dd t},\quad t\in\R,
\end{equation}
the continuous-time version of the well-known ARMA equations. \Cref{eq-MCARMA-ODE} is only formal because, in general, the paths of a L\'evy process are not differentiable. It has been shown in \citep[]{marquardt2007multivariate} that an MCARMA process $\Y$ can equivalently be defined by the continuous-time state space model
\begin{equation}
\label{eq-MCARMAssm}
\dd\X(t)=\tilde\A\X(t)\dd t+\beta \dd\Lb(t),\quad \Y(t)= C\X(t),\quad t\in\R,
\end{equation}
where the matrices $\tilde\A,\beta$ and $C$ are given by
\begin{subequations}
\label{eq-MCARMAcoeffABC}
\begin{align}
\label{eq-MCARMAcoeffA} \tilde\A =& \left(\begin{array}{ccccc}
       0 & \I_d & 0 & \ldots & 0 \\
	0 & 0 & \I_d & \ddots & \vdots \\
	\vdots && \ddots & \ddots & 0\\
	0 & \ldots & \ldots & 0 & \I_d\\
	-\tilde A_{\tilde p} & -\tilde A_{\tilde p-1} & \ldots & \ldots & -\tilde A_1
      \end{array}\right)\in M_{\tilde pd}(\R),\\
\label{eq-MCARMAcoeffB}\beta=&\left(\begin{array}{ccc}\beta_1^T & \cdots & \beta_{\tilde p}^T\end{array}\right)^T\in M_{\tilde pd,m}(\R),\quad\beta_{\tilde p-j} = -I_{\{0,\ldots,\tilde q\}}(j)\left[\sum_{i=1}^{\tilde p-j-1}{\tilde A_i\beta_{\tilde p-j-i}-\tilde B_j}\right]\quad\text{and}\\
\label{eq-MCARMAcoeffC}C=&\left(\I_d,0,\ldots,0\right)\in M_{d,\tilde pd}(\R).
\end{align}
\end{subequations}
This is but one of several possible parametrizations of the general continuous-time state space model and is in the discrete-time literature often referred to as the {\it observer canonical form} (\citep{kailath1980linear}). For the purpose of estimating the driving L\'evy process $\Lb$ it is more convenient to work with a different parametrization, which, in analogy to a canonical state space representation used in discrete-time control theory, might be called the {\it controller canonical form}. It is the multivariate generalization of the parametrization used for univariate CARMA processes in \citep{brockwell2010estimation}. We first state an auxiliary lemma which we could not find in the literature.
\begin{lemma}
\label{lemma-MatrixInverse}
Let $r,s$ be positive integers. Assume that $R(z)=z^r+M_1z^{r-1}+\ldots+M_r\in M_s(\R[z])$ is a matrix polynomial and denote by 
\begin{equation}
\M=\left[\begin{array}{ccccc}
        0 & \I_s & 0 & \cdots & 0\\
	0 & 0  & \I_s& \cdots & 0\\
	\vdots&\vdots&\vdots&\ddots&\vdots\\
	0 & 0 & 0 & \cdots & \I_s\\
	-M_r&-M_{r-1} & -M_{r-2}& \cdots & -M_1
       \end{array}\right]\in M_{rs}(\R)
\end{equation}
the associated multi-companion matrix. The rational matrix function
\begin{equation}
S(z)=[S_{ij}(z)]_{1\leq i,j\leq r} = \left(z\I_{rs}-\M\right)^{-1}\in M_{rs}(\R\{z\}),\quad S_{ij}(z)\in M_s(\R\{z\}),
\end{equation}
is then given by the following formula for the block $S_{ij}(z)$:
\begin{equation}
S_{ij}(z)=R(z)^{-1}\begin{cases}
           z^{r-1+i-j}\I_s+\sum_{k=1}^{r-j}M_kz^{r-1-k+i-j}, & j\geq i,\\
	   -\sum_{k=r-j+1}^r{M_kz^{r-1-k+i-j}},		& j<i.
          \end{cases}
\end{equation}
\end{lemma}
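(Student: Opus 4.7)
My plan is to verify the asserted formula directly by determining each block column of $(z\I_{rs} - \M)^{-1}$ via the linear system $(z\I_{rs} - \M)\bx = \be_j$, where for fixed $j$ I would write $\bx = (x_1^T, \ldots, x_r^T)^T$ with blocks $x_k \in M_s(\R\{z\})$, and $\be_j$ denotes the block vector with $\I_s$ in position $j$ and zeros elsewhere. The claim then reduces to showing that $x_i = R(z)^{-1} T_{ij}(z)$, where $T_{ij}(z)$ is the piecewise-defined polynomial matrix appearing after $R(z)^{-1}$ in the statement.

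The sparse block-companion structure of $z\I_{rs} - \M$ renders this system essentially triangular. The first $r - 1$ block rows give the recurrence $z x_k - x_{k+1} = \delta_{kj} \I_s$ for $k = 1, \ldots, r - 1$, which I would solve iteratively to obtain $x_k = z^{k-1} x_1 - z^{k-1-j} I_{\{k > j\}}\I_s$, thereby reducing the whole problem to finding $x_1$. Substituting this expression into the last block row $\sum_{l=1}^{r} M_{r-l+1} x_l + z x_r = \delta_{rj}\I_s$, the terms containing $x_1$ collapse to $(z^r + M_1 z^{r-1} + \ldots + M_r)\, x_1 = R(z)\, x_1$, while the remaining inhomogeneous terms, after the index shift $m = r - l + 1$, combine to exactly $T_{1j}(z)$. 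Since $\det R(z) = \det(z\I_{rs} - \M)$ is a nonzero polynomial, $R(z)$ is invertible over $\R\{z\}$, and I obtain $x_1 = R(z)^{-1} T_{1j}(z)$, establishing the case $i = 1$.

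The remaining blocks would follow by substitution back into the recurrence. For $i \leq j$ the indicator vanishes, so $x_i = z^{i-1} x_1$ and the exponents simply shift to reproduce the $j \geq i$ branch of the claim. For $i > j$ the additional term $-z^{i-1-j}\I_s$ must be rewritten as $-R(z)^{-1} R(z) z^{i-1-j}$ and combined with $z^{i-1} T_{1j}(z)$; upon expanding $R(z) z^{i-1-j}$ and cancelling the monomials for $l = 1, \ldots, r - j$ against the corresponding terms in $z^{i-1} T_{1j}(z)$, only $-\sum_{l=r-j+1}^{r} M_l z^{r-1-l+i-j}$ survives, matching the $j < i$ branch. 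The only real obstacle is bookkeeping — tracking index ranges and signs through the polynomial manipulations — rather than anything conceptually deep; no tools beyond the companion structure of $\M$ are required.
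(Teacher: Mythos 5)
Your proposal is correct. Every step checks out: the first $r-1$ block rows of $(z\I_{rs}-\M)\bx=\be_j$ do give $zx_k-x_{k+1}=\delta_{kj}\I_s$, whose forward solution is $x_k=z^{k-1}x_1-z^{k-1-j}I_{\{k>j\}}\I_s$; substituting into the last block row collapses the $x_1$-terms to $R(z)x_1$ and the inhomogeneous terms (after the reindexing $m=r-l+1$) to the claimed $T_{1j}(z)$, including the boundary case $j=r$ where the right-hand side $\I_s$ matches the empty sum in the formula; and the back-substitution for $i>j$ cancels exactly as you describe, leaving $-\sum_{k=r-j+1}^{r}M_kz^{r-1-k+i-j}$. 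The only point worth making explicit is that you need $\det R(z)\not\equiv 0$ to invert over $\R(z)$, which follows simply from $\det R(z)$ being monic of degree $rs$ (the identity $\det R(z)=\det(z\I_{rs}-\M)$ you invoke is true but stronger than necessary), and that a one-sided inverse of a square matrix over the commutative field $\R(z)$ is automatically two-sided. Your route differs from the paper's: the paper takes the asserted $S(z)$ as given and verifies $S(z)(z\I_{rs}-\M)=\I_{rs}$ by computing each $(i,j)$ block of the product separately (off-diagonal blocks telescope to zero, diagonal blocks to $\I_s$), whereas you solve the linear system column by column, reducing everything to the single unknown $x_1$ and then propagating. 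The paper's verification is shorter once the formula is known; your derivation actually produces the formula and so explains where it comes from, at the cost of slightly more index bookkeeping in the back-substitution step. Both rest on nothing but the companion structure.
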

\begin{proof}
We compute the $(i,j)$th block of $S(z)\left(z\I_{rs}-\M\right)$. Assuming $i < j$, this block is given by
\begin{align*}
\left[S(z)\left(z\I_{rs}-\M\right)\right]_{ij} =& \sum_{k=1}^r{S_{ik}(z)\left(z\I_{rs}-\M\right)_{kj}}\\
  						   =& zS_{ij}(z)-S_{i,j-1}(z)+S_{ir}(z)M_{r-j+1}\\
						   =& R(z)^{-1}\left[z^{r+i-j}\I_s+\sum_{k=1}^{r-j}M_kz^{r-k+i-j}-z^{r+i-j}\I_s-\sum_{k=1}^{r-j+1}M_kz^{r-k+i-j}+z^{i-1}M_{r-j+1}\right]=0.
\end{align*}
A similar calculation shows that for $i>j$, $\left[S(z)\left(z\I_{rs}-\M\right)\right]_{ij}=0$. For the blocks on the diagonal we obtain for $i\geq 2$,
\begin{align*}
\left[S(z)\left(z\I_{rs}-\M\right)\right]_{ii} =& \sum_{k=1}^r{S_{ik}(z)\left(z\I_{rs}-\M\right)_{ki}}\\
						   =& zS_{ii}(z)-S_{i,i-1}(z)+S_{ir}(z)M_{r-i+1}\\
						   =&R(z)^{-1}\left[z^r\I_s+\sum_{k=1}^{r-i}M_kz^{r-k} +\sum_{k=r-i+2}^r{M_kz^{r-k}} +z^{i-1}M_{r-i+1}\right]=\I_s,
\end{align*}
and finally
\begin{align*}
\left[S(z)\left(z\I_{rs}-\M\right)\right]_{11} = \sum_{k=1}^r{S_{1k}(z)\left(z\I_{rs}-\M\right)_{k1}} =& zS_{11}(z)+S_{1r}(z)M_r\\
 =& R(z)^{-1}\left[z^r\I_s+\sum_{k=1}^{r-1}M_kz^{r-k}+M_r\right]=\I_s.
\end{align*}
This shows that $S(z)$ is the inverse of $z\I_{rs}-\M$ and completes the proof.
\end{proof}
\begin{theorem}[Controller canonical state space representation]
\label{theorem-AlternativeSSR}
Assume that $\Lb$ is an $m$-dimensional L\'evy process and that $\Y$ is a $d$-dimensional $\Lb$-driven MCARMA process with autoregressive polynomial $\tilde P\in M_d(\R[z])$ and moving average polynomial $\tilde Q\in M_{d,m}(\R[z])$. Then there exist integers $p>q>0$ and matrix polynomials 
\begin{subequations}
\label{eq-armapolymod}
\begin{align}
z\mapsto P(z) =& z^p+A_1 z^{p-1}+\ldots+A_p\in M_m(\R[z]),\\
z\mapsto Q(z) =& B_0+B_1z+\ldots+B_qz^q\in M_{d,m}(\R[z])
\end{align}
\end{subequations}
satisfying $\tilde P(z)^{-1}\tilde Q(z)=Q(z)P(z)^{-1}$ for all $z\in\C$ and $\det P(z)=0$ if and only if $\det \tilde P(z)=0$. Moreover, the process $\Y$ has the state space representation
\begin{subequations}
\label[pluralequation]{eq-statespacerepmod}
\begin{align}
\label{eq-statespacerepmodstate}\dd\X(t)=&\A\X(t)\dd t+E_p\dd\Lb(t),\\
\label{eq-statespacerepmodobservation}\Y(t)=&\underline B\X(t),
\end{align}
\end{subequations}
where
\begin{subequations}
\label[pluralequation]{eq-SSRmatrices}
\begin{align}
\A =& \left[\begin{array}{ccccc}
        0 & \I_m & 0 & \cdots & 0\\
	0 & 0  & \I_m& \cdots & 0\\
	\vdots&\vdots&\vdots&\ddots&\vdots\\
	0 & 0 & 0 & \cdots & \I_m\\
	-A_p&-A_{p-1} & -A_{p-2}& \cdots & -A_1
       \end{array}\right]\in M_{p m}(\R),\quad E_p = \left[\begin{array}{c}0\\0\\\vdots\\0\\\I_m\end{array}\right]\in M_{p m,m}(\R),\\
\underline B =& \left[\begin{array}{cccc}B_0&B_1&\cdots& B_{p-1}\end{array}\right]\in M_{d,p m}(\R),\quad B_j=0_{d,m},\quad q+1\leq j\leq p-1.
\end{align}
\end{subequations}
\end{theorem}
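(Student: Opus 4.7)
The claim splits naturally into two parts: (i) the existence of matrix polynomials $P,Q$ with the stated algebraic properties, and (ii) the verification that the controller canonical state-space model built from $P,Q$ generates $\Y$. The argument rests crucially on \cref{lemma-MatrixInverse} in Step (ii), with Step (i) reducing to classical linear algebra.

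For Step (i), I would use the straightforward \emph{adjugate realization}: set
\[P(z)=\det\bigl(\tilde P(z)\bigr)\I_m,\qquad Q(z)=\operatorname{adj}\bigl(\tilde P(z)\bigr)\tilde Q(z),\]
so that $\tilde P(z)^{-1}\tilde Q(z)=Q(z)P(z)^{-1}$ by Cramer's rule. Since $\tilde P$ is monic of degree $\tilde p$, the scalar polynomial $\det\tilde P(z)$ has degree $p\coloneqq\tilde pd$ and leading coefficient $1$, so $P(z)$ has the prescribed form $z^p\I_m+A_1z^{p-1}+\ldots+A_p$ (with coefficients that happen to be scalar multiples of $\I_m$). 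The entries of $\operatorname{adj}(\tilde P(z))$ have degree at most $(\tilde p-1)d$, hence $\deg Q\leq(\tilde p-1)d+\tilde q<\tilde pd=p$; setting $q$ equal to this degree, padding by a zero coefficient if needed to ensure $q\geq 1$, yields $p>q>0$. Finally, $\det P(z)=[\det\tilde P(z)]^m$ gives the equivalence $\det P(z)=0\iff\det\tilde P(z)=0$.

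For Step (ii), let $\A,E_p,\underline B$ be as in \cref{eq-SSRmatrices}. The identity $\det(z\I_{pm}-\A)=\det P(z)$, combined with the zero-set equivalence from Step (i), shows that $\A$ is Hurwitz whenever $\tilde\A$ is, which is the standing stationarity hypothesis on $\Y$. Thus \cref{eq-statespacerepmodstate} admits the unique strictly stationary solution $\X(t)=\int_{-\infty}^t\ee^{\A(t-s)}E_p\dd\Lb(s)$, and it remains to show $\underline B\X(t)=\Y(t)$ almost surely. By \cref{eq-MCARMAssm} we also have $\Y(t)=\int_{-\infty}^t C\ee^{\tilde\A(t-s)}\beta\dd\Lb(s)$, so since both stochastic integrals are driven by the \emph{same} L\'evy process it suffices to verify the pointwise identity of the deterministic kernels $C\ee^{\tilde\A u}\beta=\underline B\ee^{\A u}E_p$, $u\geq 0$. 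I would do this by comparing Laplace transforms. On one side, $C(z\I_{\tilde pd}-\tilde\A)^{-1}\beta=\tilde P(z)^{-1}\tilde Q(z)$ is a well-known property of the observer canonical form \labelcref{eq-MCARMAcoeffABC}. On the other, applying \cref{lemma-MatrixInverse} with $r=p,\,s=m,\,M_k=A_k$ shows that the last block column of $(z\I_{pm}-\A)^{-1}$ has $i$-th block $z^{i-1}P(z)^{-1}$, so that
\[\underline B(z\I_{pm}-\A)^{-1}E_p=\sum_{j=0}^{p-1}B_jz^jP(z)^{-1}=Q(z)P(z)^{-1}=\tilde P(z)^{-1}\tilde Q(z).\]
Uniqueness of the Laplace transform on continuous, exponentially decaying kernels then forces the two kernels to agree pointwise, completing the proof.

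The heart of the argument is algebraic and, once \cref{lemma-MatrixInverse} is available, reduces to matching two rational transfer functions. The main obstacle I anticipate is bookkeeping: harmonizing the degree convention $p>q>0$ with the padded definition $B_j=0_{d,m}$ for $q+1\leq j\leq p-1$ in the expression for $\underline B$, and checking that the adjugate construction in Step (i) indeed produces coefficients whose $\det P(z)$ has exactly the same root set as $\det\tilde P(z)$. Beyond this, the Hurwitz property for $\A$ inherited from $\tilde\A$ makes the stochastic integrals well defined and the kernel identification does the rest.
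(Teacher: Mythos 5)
Your proof is correct in substance, and its core computation --- using \cref{lemma-MatrixInverse} to read off the last block-column of $(z\I_{pm}-\A)^{-1}$ as $\left[1,z,\ldots,z^{p-1}\right]^T\otimes P(z)^{-1}$ and conclude $\underline B(z\I_{pm}-\A)^{-1}E_p=Q(z)P(z)^{-1}$ --- is exactly the paper's. You diverge in two places, in each case replacing a citation by an explicit argument. For the existence of $P,Q$ the paper simply invokes Kailath's left-to-right matrix-fraction conversion (\citep[Lemma 6.3-8]{kailath1980linear}); you instead use the adjugate realization $P=\det(\tilde P)\I_m$, $Q=\operatorname{adj}(\tilde P)\tilde Q$, which is elementary and makes $\det P=(\det\tilde P)^m$, and hence the root-set equivalence, transparent, at the price of a typically non-minimal $p=\tilde pd$ (the theorem does not require minimality, so this is harmless). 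For the identification of the output process the paper cites \citep[Theorem 1]{schlemmmixing2010}, which states precisely that two state space models realizing the same rational matrix produce the same output process; you re-prove this special case by matching the moving-average kernels $C\ee^{\tilde\A u}\beta$ and $\underline B\ee^{\A u}E_p$ through their Laplace transforms, a legitimate self-contained substitute that, as you note, leans on the standing stationarity hypothesis to write both processes as moving averages. One slip to fix in Step (i): the entries of $\operatorname{adj}(\tilde P(z))$ are $(d-1)\times(d-1)$ minors of $\tilde P(z)$ and therefore have degree at most $(d-1)\tilde p$, not $(\tilde p-1)d$; with the bound as you wrote it the inequality $\deg Q<p$ can fail (take $d=1$, $\tilde p=3$, $\tilde q=2$), whereas the correct bound gives $\deg Q\leq(d-1)\tilde p+\tilde q<(d-1)\tilde p+\tilde p=\tilde pd=p$ precisely because $\tilde q<\tilde p$.
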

\begin{proof}
The existence of matrix polynomials $P\in M_m(\R[z])$ and $Q\in M_{m,d}(\R[z])$ with the asserted properties has been shown in \citep[Lemma 6.3-8]{kailath1980linear}. In order to prove \cref{eq-statespacerepmod} it suffices, by \cite[Theorem 1]{schlemmmixing2010}, to prove that the triple $(\A,E_p,\underline B)$, defined in \cref{eq-SSRmatrices}, is a realization of the right matrix fraction $QP^{-1}$, that is
\begin{equation*}
{\underline B}\left[z\I_{ p m}-{\A}\right]^{-1}E_{ p} =  Q(z) P(z)^{-1},\quad\forall z\in \C.
\end{equation*}
Using \cref{lemma-MatrixInverse} and the fact that right multiplication by $E_{ p}$ selects the last block-column one sees that
\begin{equation*}
\left[z\I_{ p m}-{\A}\right]^{-1}E_{ p} = \left[\begin{array}{cccc}1&z&\cdots&z^{  p-1}\end{array}\right]^T\otimes  P(z)^{-1},
\end{equation*}
where $\otimes$ denotes the Kronecker product of two matrices. By definition it holds that
\begin{equation*}
{\underline B}\left[\begin{array}{cccc}1&z&\cdots&z^{  p-1}\end{array}\right]^T =  B_0 +  B_1z +  \ldots +  B_{ q}z^{ q} =  Q(z),
\end{equation*}
and so the claim follows.
\end{proof}
In view of \cref{theorem-AlternativeSSR} one can assume without loss of generality that an MCARMA process $\Y$ is given by a state space representation \labelcref{eq-statespacerepmod} with coefficient matrices of the form \labelcref{eq-SSRmatrices}. We make the following assumptions about the zeros of the polynomials $P$, $Q$ in equations \labelcref{eq-armapolymod}. The first one is a stability assumption guaranteeing the existence of a stationary solution of the state equation \labelcref{eq-statespacerepmodstate}.
\begin{assumption}
\label{assum-eigA}
The zeros of the polynomial $\det P(z)\in \R[z]$ have strictly negative real parts.
\end{assumption}
The second assumption corresponds to the minimum-phase assumption in classical time series analysis. For a matrix $M\in M_{d,m}(\R)$, any matrix $M^{\sim 1}$ satisfying $M^{\sim1}M=\I_m$ is called a {\it left inverse} of $M$. It is easy to check that the existence of a left inverse of $M$ is equivalent to the conditions $m\leq d$, $\rank M=m$, and that in this case $M^{\sim1}$ can be computed as $M^{\sim1}=(M^TM)^{-1}M^T$.
\begin{assumption}
\label{assum-eigB}
The dimension $m$ of the driving L\'evy process $\Lb$ is smaller than or equal to the dimension of the multivariate CARMA process $\Y$, and both $B_q$ and $B_q^TB_0$ have full rank $m$. The zeros of the polynomial $\det B_q^{\sim1}Q(z)\in\R[z]$ have strictly negative real parts.
\end{assumption}
It is well known that every solution of \cref{eq-statespacerepmodstate} satisfies
\begin{equation*}
\X(t) = \ee^{\A(t-s)}\X(s) + \int_s^t \ee^{\A(t-u)}E_p\dd\Lb(u),\quad s,t\in\R,\quad s<t.
\end{equation*}
Under \cref{assum-eigA}, the state equation \labelcref{eq-statespacerepmodstate} has a unique strictly stationary, causal solution given by
\begin{equation}
\label{eq-Xmovingaverage}
\X(t) = \int_{-\infty}^t \ee^{\A(t-u)}E_p\dd\Lb(u),\quad t\in\R.
\end{equation}
and consequently, the multivariate CARMA process $\Y$ has the moving-average representation
\begin{equation}
\label{eq-Ymovingaverage}
\Y(t) = \int_{-\infty}^\infty g(t-u)\dd\Lb(u),\quad t\in\R;\quad g(t) = \underline{B}\ee^{\A t}E_pI_{[0,\infty]}(t).
\end{equation}

We recall that we denote by $X^i(t)$ the $i$th component of the vector $\X(t)$ and define, for $j=1,\ldots,p$, the $j$th {\it $m$-block} of $\X$ by the formula
\begin{equation}
\label{eq-Xmblock}
\X^{(j)}(t)=\left[\begin{array}{ccc}{X^{(j-1)m+1}(t)}^T & \cdots & {X^{jm}(t)}^T \end{array}\right]^T,\quad t\in\R.
\end{equation}
A  very useful property, which the sequence of approximation errors $\left(\Delta\Lb_n-\widehat{\Delta\Lb}(n)\right)_{n\in\N}$ might enjoy, is asymptotic independence; heuristically this means that $\Delta\Lb_n-\widehat{\Delta\Lb}(n)$  and $\Delta\Lb_m-\widehat{\Delta\Lb}(m)$ are almost independent if $|n-m|\gg 1$. One possibility of making this concept precise is to introduce the notion of strong (or $\alpha$-) mixing, which has first been defined in \citep{rosenblatt1956central}. Since then it has turned out to be a very powerful tool for establishing asymptotic results in the theory of inference for stochastic processes. For a stationary stochastic process $X=(X_t)_{t\in I}$, where $I$ is either $\R$ or $\Z$, we first introduce the $\sigma$-algebras $\mathscr{F}_n^m=\sigma(X_j:j\in I,n<j<m)$, where $-\infty\leq n<m\leq \infty$. For $m\in I$, the strong mixing coefficient $\alpha(m)$ is defined as
\begin{equation}
\label{eq-Defalpha}
\alpha(m) = \sup_{A\in\mathscr{F}_{-\infty}^0,B\in\mathscr{F}_m^\infty}\left|\Pb(A\cap B)-\Pb(A)\Pb(B)\right|.
\end{equation}
The process $X$ is called {\it strongly mixing} if $\lim_{m\to\infty}\alpha(m)=0$; if $\alpha(m)=O(\lambda^m)$ for some $0<\lambda<1$ it is called {\it exponentially strongly mixing}.


\section{Recovery of the driving L\'evy process from continuous-time observations}
\label{section-recovery}

In this section we address the problem of recovering the driving L\'evy process of a multivariate CARMA process given by a state space representation \labelcref{eq-statespacerepmod}, if continuous-time observations are available. We assume that the order $(p,q)$ as well as the coefficient matrices $\A$ and $\underline{B}$ are known. If they are not they can first be estimated by, e.g. maximization of the Gaussian likelihood \citep{schlemm2011quasi}, although the precise statistical properties of this two-step estimator are beyond the scope of the present paper. More precisely, we show that, conditional on the value $\X(0)$ of the state vector at time zero, one can write the value of $\Lb(t)$, for any $t\in[0,T]$, as a function of the continuous-time record $\left(\Y(t):0\leq t\leq T\right)$. In particular one can obtain an i.i.d$.$ sample from the distribution of the unit increments $\Lb(n)-\Lb(n-1)$, $1\leq n\leq T$, which, when subjected to one of several well-established estimation procedures, can be used 
to estimate a parametric model for $\Lb$. It can be argued that most of the time a continuous record of observations is not available. The results of this section will, however, serve as the starting point for the recovery of an approximate sample from the unit increment distribution based on discrete-time observation of $\Y$, which is presented in \cref{section-DTestimation}.

The strategy is to first express the state vector $\X$ in terms of the observations $\Y$ and then to invert the state equation \labelcref{eq-statespacerepmodstate} to obtain the driving L\'evy process as a function of the state vector. We first define the {\it upper $q$-block-truncation} of $\X$, denoted by $\X_q$, by
\begin{equation*}
\X_q(t)=\left[\begin{array}{ccc}{\X^{(1)}(t)}^T&\cdots&{\X^{(q)}(t)}^T\end{array}\right]^T,\quad t\in \R,
\end{equation*}
where the $m$-blocks $\X^{(j)}$ have been defined in \cref{eq-Xmblock}. 
\begin{lemma}
\label{lemma-OUXq}
Assume that $\Lb$ is a L\'evy process and that $\Y$ is a multivariate CARMA process given as a solution of the state space equations \labelcref{eq-statespacerepmod}. If \cref{assum-eigB} holds, the truncated state vector $\X_q$ satisfies the stochastic differential equation
\begin{equation}
\label{eq-OUXq}
\dd\X_q(t)=\B\X_q(t)\dd t+E_q\Y(t)\dd t,
\end{equation}
where
\begin{equation}
\label{eq-OUXqcoeff}
\B = \left[\begin{array}{ccccc}
                0 & \I_m & 0 & \cdots & 0\\
		0 & 0  & \I_m&  & 0\\
		\vdots&\vdots&&\ddots&\vdots\\
		0 & 0 & 0 &  & \I_m\\
		-B_q^{\sim 1}B_0&-B_q^{\sim 1}B_1 & -B_q^{\sim 1}B_2& \cdots & -B_q^{\sim 1}B_{q-1} 
               \end{array}\right]\in M_{mq}(\R),\quad E_q = \left[\begin{array}{c}0\\0\\\vdots\\0\\ B_q^{\sim 1}\end{array}\right]\in M_{mq,d}(\R),
\end{equation}
and $B_q^{\sim1}$ denotes the left inverse of $B_q$. Moreover, the eigenvalues of the matrix $\B$ have strictly negative real parts.
\end{lemma}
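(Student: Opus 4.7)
The plan is to derive the SDE for $\X_q$ directly from the block structure of the state equation \labelcref{eq-statespacerepmodstate}, treating the observation equation \labelcref{eq-statespacerepmodobservation} as an algebraic constraint that allows us to express the $(q+1)$-th $m$-block of $\X$ in terms of $\Y$ and $\X_q$. Because the driving noise $\dd\Lb(t)$ enters only in the last block of $\dd\X(t)$ (via $E_p$), the first $p-1$ block-equations are deterministic ODEs; in particular, the first $q-1$ of them read $\dd\X^{(j)}(t)=\X^{(j+1)}(t)\dd t$ for $j=1,\ldots,q-1$, which already accounts for the upper block-rows of $\B$ and the vanishing blocks of $E_q$. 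So the only nontrivial row to verify is the $q$-th.

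For the $q$-th block, the state equation gives $\dd\X^{(q)}(t)=\X^{(q+1)}(t)\dd t$. To rewrite this in the desired form I would use the observation equation together with the fact that $B_j=0$ for $j>q$: this yields
\begin{equation*}
\Y(t)=B_0\X^{(1)}(t)+B_1\X^{(2)}(t)+\ldots+B_q\X^{(q+1)}(t),
\end{equation*}
and solving for $B_q\X^{(q+1)}(t)$ and left-multiplying by $B_q^{\sim1}$ (which exists by \cref{assum-eigB}) produces $\X^{(q+1)}(t)=B_q^{\sim1}\Y(t)-\sum_{j=0}^{q-1}B_q^{\sim1}B_j\X^{(j+1)}(t)$. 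Substituting this expression into $\dd\X^{(q)}(t)=\X^{(q+1)}(t)\dd t$ gives precisely the last block-row of $\B\X_q(t)\dd t+E_q\Y(t)\dd t$, establishing \cref{eq-OUXq}.

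For the eigenvalue assertion, I would observe that $\B$ is a multi-companion matrix of the form appearing in \cref{lemma-MatrixInverse} with $R(z)=z^q\I_m+B_q^{\sim1}B_{q-1}z^{q-1}+\ldots+B_q^{\sim1}B_0=B_q^{\sim1}Q(z)$. A standard computation (for instance by expanding $\det(z\I_{mq}-\B)$ along block-rows, or by invoking the formula for $(z\I_{mq}-\B)^{-1}$ in \cref{lemma-MatrixInverse}) shows that the characteristic polynomial of $\B$ equals $\det R(z)=\det(B_q^{\sim1}Q(z))$ up to a nonzero scalar. Hence the eigenvalues of $\B$ coincide with the zeros of $\det(B_q^{\sim1}Q(z))$, and by \cref{assum-eigB} these lie in the open left half-plane.

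I do not expect any step to be a real obstacle: the first part is essentially bookkeeping on the block structure, and the only place where an assumption is invoked nontrivially is the existence of $B_q^{\sim1}$ (from the full-rank part of \cref{assum-eigB}) and the location of the zeros of $\det(B_q^{\sim1}Q(z))$ (from the minimum-phase part). The mildly delicate point worth making explicit is that the substitution of $\X^{(q+1)}$ in terms of $\Y$ is a pathwise, algebraic identity valid for every $t\in\R$ because $\Y$ and $\X$ are related deterministically by the observation equation, so no stochastic-calculus care is needed when rewriting the $q$-th block equation.
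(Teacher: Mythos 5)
Your proof is correct and follows essentially the same route as the paper: the SDE is obtained by combining the first $q$ block-rows of the state equation with the observation equation (using the left inverse $B_q^{\sim1}$ to solve for $\X^{(q+1)}$), and the eigenvalue claim follows from the standard correspondence between the eigenvalues of the multi-companion matrix $\B$ and the zeros of $\det\bigl(\I_mz^q+B_q^{\sim1}B_{q-1}z^{q-1}+\ldots+B_q^{\sim1}B_0\bigr)=\det\bigl(B_q^{\sim1}Q(z)\bigr)$, which \cref{assum-eigB} places in the open left half-plane. The paper simply cites \citep[Lemma 3.8]{marquardt2007multivariate} for that correspondence rather than re-deriving it, but the content is identical.
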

\begin{proof}
\Cref{eq-OUXq} follows easily from combining the first $q$ block-rows of the state transition equation \labelcref{eq-statespacerepmodstate} with the observation equation \labelcref{eq-statespacerepmodobservation}. The assertion about the eigenvalues of $\B$ is a consequence of the well-known correspondence between the eigenvalues of a multi-companion matrix and the zeros of the associated polynomial, see, e.g., \citep[Lemma 3.8]{marquardt2007multivariate}. By this correspondence, the eigenvalues of $\B$ are exactly the zeros of the polynomial $\det\left(\I_mz^q+B_q^{\sim1}B_{q-1}z^{q-1}+\ldots+B_q^{\sim1}B_0\right)$, whose zeros have strictly negative real parts by \cref{assum-eigB}.
\end{proof}
As before we see that \cref{eq-OUXq} is readily integrated to
\begin{equation}
\label{eq-OUXqintegrated}
\X_q(t)=\ee^{\B (t-s)}\X_q(s)+\int_s^t{\ee^{\B(t-u)}E_q\Y(u)\dd u},\quad s,t\in\R,\quad s<t.
\end{equation}
The remaining blocks $\X^{(i)}$, $q<i\leq p$, are obtained from $\X_q$ and $\Y$ by differentiation. The existence of the occurring derivatives of the state process $\X$ and the MCARMA process $\Y$ is guaranteed by \cref{lemma-propMCARMA} in the appendix.
\begin{lemma}
For $1\leq n\leq p-q$, the block $\X^{(q+n)}$ is given by
\begin{equation}
\label{eq-Xqblock}
\X^{(q+n)}(t)=E_q^T\left[\B^n\X_q(t)+\sum_{\nu=0}^{n-1}{\B^{n-1-\nu}E_q\DD^{\nu}\Y(t)}\right],\quad t\in\R.
\end{equation}
\end{lemma}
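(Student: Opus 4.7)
The plan is a short induction on $n$ resting on two observations. The first is that the multi-companion structure of $\A$ in \cref{eq-SSRmatrices} forces the $j$th block-row of the state equation \labelcref{eq-statespacerepmodstate} to read $\dd\X^{(j)}(t)=\X^{(j+1)}(t)\dd t$ for $j=1,\ldots,p-1$; pathwise this yields $\X^{(j+1)}(t)=\DD\X^{(j)}(t)$, and iterating gives $\X^{(q+n)}(t)=\DD^n\X^{(q)}(t)$ for every $1\leq n\leq p-q$. The pathwise derivatives required (including those of $\Y$ that appear on the right-hand side of \cref{eq-Xqblock}) exist by virtue of \cref{lemma-propMCARMA} in the appendix, since $q+n\leq p$.

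The second observation is that, by \cref{lemma-OUXq}, the truncated state vector $\X_q$ satisfies the linear driven ODE $\DD\X_q(t)=\B\X_q(t)+E_q\Y(t)$. I would differentiate this identity $n-1$ times, substituting $\DD\X_q$ from the ODE after each differentiation and relabelling the summation index, to obtain by induction on $n$ that
\begin{equation*}
\DD^n\X_q(t)=\B^n\X_q(t)+\sum_{\nu=0}^{n-1}\B^{n-1-\nu}E_q\DD^{\nu}\Y(t),\qquad 1\leq n\leq p-q.
\end{equation*}
The inductive step is a one-line computation: applying $\DD$ to the right-hand side and using the ODE for $\DD\X_q$ produces $\B^{n+1}\X_q+\B^n E_q\Y+\sum_{\nu=0}^{n-1}\B^{n-1-\nu}E_q\DD^{\nu+1}\Y$, which is precisely the formula with $n$ replaced by $n+1$ after a shift of the summation index.

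To close the argument I would extract the last $m$-block on both sides of the displayed identity. On the left, the last $m$-block of $\X_q$ is $\X^{(q)}$, and since differentiation commutes with the constant projection onto this block, the last block of $\DD^n\X_q(t)$ equals $\DD^n\X^{(q)}(t)=\X^{(q+n)}(t)$ by the first paragraph. Denoting the block-extraction operator by $E_q^T$ as in the statement of the lemma, applying it to the right-hand side delivers exactly \cref{eq-Xqblock}. I do not anticipate any substantive obstacle: the entire argument is algebraic once the ODE for $\X_q$ and the derivative relations between the $m$-blocks of $\X$ are in hand. The only mildly delicate point is the justification of pathwise differentiation up to order $p-q$ of the L\'evy-driven objects, and this is encapsulated in \cref{lemma-propMCARMA}.
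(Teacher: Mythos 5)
Your proof is correct and follows essentially the same route as the paper's: an induction driven by the identities $\X^{(j+1)}=\DD\X^{(j)}$ and $\DD\X_q=\B\X_q+E_q\Y$ from \cref{lemma-OUXq}, with the same one-line inductive step. The only cosmetic difference is that you run the induction on the full vector $\DD^n\X_q$ and project onto the last $m$-block at the end, whereas the paper inducts directly on the projected quantity $\X^{(q+n)}$.
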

\begin{proof}
We first observe that \cref{eq-statespacerepmod,eq-OUXq} imply that
\begin{equation*}
\label{eq-DXq}
\X^{(q+n)}(t)=\DD\X^{(q+n-1)}(t),\qquad \DD\X_q(t)=\B\X_q(t)+E_q\Y(t).
\end{equation*}
Therefore the claim is true for $n=1$. Assuming it is true for some $1<n<p-q$ it follows that
\begin{align*}
\X^{(q+n+1)}(t)=&\DD\X^{(q+n)}(t)\\
	       =&DE_q^T\left[\B^n\X_q(t)+\sum_{\nu=0}^{n-1}{\B^{n-1-\nu}E_q\DD^{\nu}\Y(t)}\right]\\
	       =&E_q^T\left[\B^{n+1}\X_q(t)+\B^nE_q\Y(t)+\sum_{\nu=0}^{n-1}{\B^{n-1-\nu}E_q\DD^{\nu+1}\Y(t)}\right]\\
	       =&E_q^T\left[\B^{n+1}\X_q(t)+\sum_{\nu=0}^n{\B^{n-\nu}E_q\DD^{\nu}\Y(t)}\right].\qedhere
\end{align*}
\end{proof}
\Cref{eq-OUXqintegrated,eq-Xqblock} allow to compute the value of $\X(t)$ based on the knowledge of the initial value $\X(0)$ and $\{\Y(s):0\leq s\leq t\}$. In order to obtain the value of $\Lb(t)$ we integrate the last block-row of the state transition equation \labelcref{eq-statespacerepmodstate} to obtain
\begin{equation}
\label{eq-recoveryL}
\Lb(t)=\X^{(p)}(t)-\X^{(p)}(0)+\underline{A}\int_0^t\X(s)\dd s,
\end{equation}
where $\underline{A}=\left[\begin{array}{ccc}A_p& \ldots & A_1\end{array}\right]$. We also write $\underline{A}_q=\left[\begin{array}{ccc}A_p& \ldots & A_{p-q+1}\end{array}\right]$.
\begin{theorem}
\label{theorem-recoveryDeltaL}
Let $\Y$ be the multivariate CARMA process defined by the state space representation \labelcref{eq-statespacerepmod} and assume that \cref{assum-eigB} holds. The increment $\Delta\Lb_n=\Lb(n)-\Lb(n-1)$ is then given by
\begin{align}
\label{eq-recoveryDeltaL}
\Delta\Lb_n =&\sum_{\nu=0}^{p-q-1}{\left[E_q^T\B^{p-q-1-\nu}E_q + \sum_{k=\nu}^{p-q-2}{ A_{p-q-k-1}E_q^T\B^{k-\nu}E_q}\right]\left[\DD^{\nu}\Y(n)-\DD^{\nu}\Y(n-1)\right]}\notag\\
	      &+\left[\underline{A}_q\B^{-1}+\sum_{k=1}^{p-q}{ A_{p-q-k+1}E_q^T\B^{k-1}}+E_q^T\B^{p-q}\right]\left[\X_q(n)-\X_q(n-1)\right]\notag\\
  &+A_p\left[B_q^{\sim 1}B_0\right]^{-1}B_q^{\sim 1}\int_{n-1}^n{\Y(s)\dd s}
\end{align}
and
\begin{equation}
\label{eq-Xqnrecursion}
\X_q(n)=\ee^{\B}\X_q(n-1)+\int_{n-1}^n{\ee^{\B(n-u)}E_q\Y(u)\dd u},\quad n\geq 1.
\end{equation}
\end{theorem}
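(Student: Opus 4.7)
The second assertion \labelcref{eq-Xqnrecursion} is an immediate specialization of \cref{eq-OUXqintegrated} to $s=n-1$, $t=n$, so I focus on establishing \cref{eq-recoveryDeltaL}. The starting point is \cref{eq-recoveryL}, which upon subtracting the identities at $t=n$ and $t=n-1$ yields
\begin{equation*}
\Delta\Lb_n = \X^{(p)}(n)-\X^{(p)}(n-1) + \underline{A}\int_{n-1}^n\X(s)\,\dd s.
\end{equation*}
The plan is to rewrite both summands in terms of the three quantities that appear on the right-hand side of \cref{eq-recoveryDeltaL}: differences $\DD^\nu\Y(n)-\DD^\nu\Y(n-1)$, the increment $\X_q(n)-\X_q(n-1)$, and the integral $\int_{n-1}^n\Y(s)\,\dd s$.

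For the first summand I apply \cref{eq-Xqblock} with the index $n$ of that lemma replaced by $p-q$: this expresses $\X^{(p)}(t)$ as a linear combination of $\X_q(t)$ and $\DD^\nu\Y(t)$ for $\nu=0,\ldots,p-q-1$, so that taking a difference produces exactly the $E_q^T\B^{p-q-1-\nu}E_q$-contribution to the first sum in \cref{eq-recoveryDeltaL} and the $E_q^T\B^{p-q}$-contribution to the second bracket. For the second summand I decompose
\begin{equation*}
\underline{A}\,\X(t) = \underline{A}_q\X_q(t) + \sum_{j=1}^{p-q} A_{p-q-j+1}\,\X^{(q+j)}(t),
\end{equation*}
use \cref{eq-Xqblock} once more to replace each $\X^{(q+j)}$ by a linear combination of $\X_q$ and $\DD^\nu\Y$ ($\nu=0,\ldots,j-1$), and integrate over $[n-1,n]$. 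Integrals $\int_{n-1}^n\DD^\nu\Y(s)\,\dd s$ with $\nu\geq 1$ collapse to telescoping differences $\DD^{\nu-1}\Y(n)-\DD^{\nu-1}\Y(n-1)$ by the fundamental theorem of calculus (the necessary differentiability of $\Y$ is guaranteed by the auxiliary lemma in the appendix cited in \cref{section-recovery}), while the $\nu=0$ terms produce the remaining $\int_{n-1}^n\Y(s)\,\dd s$ contributions. Interchanging the order of summation in the resulting double sum and relabeling $k=j-1$ matches, after a short bookkeeping step, the range $k=\nu,\ldots,p-q-2$ and the coefficient $A_{p-q-k-1}$ appearing in the theorem.

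Finally, the integrals $\int_{n-1}^n\X_q(s)\,\dd s$ must be eliminated. Integrating the state equation \labelcref{eq-OUXq} over $[n-1,n]$ and using the invertibility of $\B$ (guaranteed by \cref{lemma-OUXq} since all eigenvalues of $\B$ have strictly negative real part) gives
\begin{equation*}
\int_{n-1}^n\X_q(s)\,\dd s = \B^{-1}\bigl[\X_q(n)-\X_q(n-1)\bigr] - \B^{-1}E_q\int_{n-1}^n\Y(s)\,\dd s.
\end{equation*}
Substituting this back produces the $\underline{A}_q\B^{-1}[\X_q(n)-\X_q(n-1)]$ term in the second bracket, while the remaining $-\underline{A}_q\B^{-1}E_q\int_{n-1}^n\Y\,\dd s$ contribution has to be identified with $A_p[B_q^{\sim 1}B_0]^{-1}B_q^{\sim 1}\int_{n-1}^n\Y\,\dd s$. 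This identification is the one non-routine computation: it follows from \cref{lemma-MatrixInverse} applied with $R(z)=z^qI_m+B_q^{\sim 1}B_{q-1}z^{q-1}+\ldots+B_q^{\sim 1}B_0$, evaluated at $z=0$, which shows that $\B^{-1}E_q$ has $-[B_q^{\sim 1}B_0]^{-1}B_q^{\sim 1}$ in its first $m$-block and zeros elsewhere; multiplying on the left by $\underline{A}_q=[A_p,A_{p-1},\ldots,A_{p-q+1}]$ then picks out precisely the first block and yields $\underline{A}_q\B^{-1}E_q=-A_p[B_q^{\sim 1}B_0]^{-1}B_q^{\sim 1}$.

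The main obstacle is not conceptual but organizational: keeping the double-sum reindexing straight and matching every coefficient produced by the two applications of \cref{eq-Xqblock} (one for $\X^{(p)}$, one inside $\underline{A}\int\X$) with the corresponding coefficient in the statement. The algebraic identification of $\underline{A}_q\B^{-1}E_q$ via \cref{lemma-MatrixInverse} is the only step that genuinely uses the structural information encoded in \cref{eq-SSRmatrices}; everything else is linear algebra and the fundamental theorem of calculus.
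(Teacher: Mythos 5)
Your plan follows essentially the same route as the paper's proof: substitute \cref{eq-Xqblock} into \cref{eq-recoveryL}, eliminate $\int_{n-1}^n\X_q(s)\,\dd s$ by integrating \cref{eq-OUXq} and invoking the invertibility of $\B$, and identify $\underline{A}_q\B^{-1}E_q$ with $-A_p[B_q^{\sim 1}B_0]^{-1}B_q^{\sim 1}$ via the block structure of $\B^{-1}$. Your sign for that last identity is in fact the correct one (the paper's proof states it without the minus sign, which is a typo, since the final formula needs $-\underline{A}_q\B^{-1}E_q\int\Y$), and your explicit verification through \cref{lemma-MatrixInverse} at $z=0$ is a welcome addition to what the paper merely asserts.
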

\begin{proof}
Substituting \cref{eq-Xqblock} into \cref{eq-recoveryL} leads to
\begin{align*}
\Delta\Lb_n =& \sum_{\nu=0}^{p-q-1}{\left[E_q^T\B^{p-q-1-\nu} + \sum_{k=\nu}^{p-q-2}{ A_{p-q-k-1}E_q^T\B^{k-\nu}}\right]E_q\left[\DD^{\nu}\Y(n)-\DD^{\nu}\Y(n-1)\right]}\\
  &+E_q^T\B^{p-q}\left[\X_q(n)-\X_q(n-1)\right]  +   \left[\underline{A}_q+\sum_{k=1}^{p-q}{ A_{p-q-k+1}E_q^T\B^k}\right]\int_{n-1}^n{\X_q(s)\dd s}\\
  &+\sum_{k=1}^{p-q}{ A_{p-q-k+1}E_q^T\B^{k-1}E_q\int_{n-1}^n{\Y(s)\dd s}}.
\end{align*}
\Cref{assum-eigB} implies that $B_q^{\sim 1}B_0$ is invertible and, by \cref{lemma-MatrixInverse}, the matrix $\B$ is invertible as well. Thus, integration of \cref{eq-OUXq} shows that
\begin{equation*}
\int_{n-1}^n{\X_q(s)\dd s} = \B^{-1}\left[\X_q(n)-\X_q(n-1)-E_q\int_{n-1}^n{\Y(s)\dd s}\right].
\end{equation*}
Plugging this into the last expression for $\Delta\Lb_n$ and using the equality $\underline A_q\B^{-1}E_q=A_p\left[B_q^{\sim 1}B_0\right]^{-1}B_q^{\sim 1}$ proves \cref{eq-recoveryDeltaL}. \Cref{eq-Xqnrecursion} follows from setting $t=n$, $s=n-1$ in \cref{eq-OUXqintegrated}.
\end{proof}
In order to keep the notation simple we restrict our attention to unit increments $\Delta\Lb$. In all our arguments and results, $\Delta\Lb_n$ can be replaced by $\Delta_\delta\Lb_n\coloneqq\Lb(n\delta)-\Lb((n-1)\delta)$ for some $\delta>0$.
\section{Approximate recovery of the driving L\'evy process from discrete-time observations}
\label{section-DTestimation}
In this section we consider the question of how to obtain estimates of the increments $\Delta\Lb_n$ of the driving L\'evy process based on a discrete-time record of the multivariate CARMA process $\Y$. The starting point is \cref{eq-recoveryDeltaL} which expresses the increment $\Delta\Lb_n$ in terms of derivatives and integrals of $\Y$. In order to approximate $\Delta\Lb_n$ by a function $\widehat{\Delta\Lb}_n^{(h)}$ of the discrete-time record, it is therefore necessary to approximate these derivatives and integrals. For this purpose we will employ forward differences (\cref{eq-forwarddifference}) and the trapezoidal rule of numerical integration (\cref{eq-DefTrapez}). We always assume that values of $\Y$ are available at the discrete times $(0,h,2h,\ldots,T)$ only. For notational convenience we also assume that $h^{-1}\in\N$; our results continue to hold if this restriction is dropped.

Our main result in this section is \cref{theorem-Levyapprox}. It states that the moments of the approximation error $\widehat{\Delta\Lb}_n^{(h)}-\Delta\Lb_n$ are of order $h^{1/2}$, and thus converge to zero as the sampling frequency $h^{-1}$ tends to infinity. Before we can prove this result we need to give a quantitative account of the approximation theory of derivatives and integrals of MCARMA processes; this is achieved in \cref{subsection-derivatives,subsection-integrals}, respectively.
\subsection{Approximation of derivatives}
\label{subsection-derivatives}
Throughout we will approximate derivatives by so-called forward differences which can be interpreted as iterated difference quotients. For a general introduction to finite difference approximations, see \citep[Chapter 1]{leveque2007}. For any function $f$ and any positive integer $\nu$ we define
\begin{equation}
\label{eq-forwarddifference}
\Delta_h^\nu[f](t)\coloneqq\frac{1}{h^\nu}\sum_{i=0}^\nu{(-1)^{\nu-i}\binom{\nu}{i}f(t+ih)}.
\end{equation}
It is apparent from this formula that knowledge of $f$ on the discrete time grid $(0,h,\ldots,T)$ is sufficient to compute $\Delta_h^\nu[f](t)$ for any $t\in[0,T-\nu h]\cap h\Z$. We will consider the differentiation of integrals of functions, for which we introduce the notations
\begin{equation}
\label{eq-intf}
I_{f}(t) \coloneqq \int_0^t{f(s)\dd s},\quad\text{ as well as }\quad \be_{I_f,n}^{(h)}\coloneqq \Delta_h^1\left[I_f\right](n)-f(n)
\end{equation}
for the corresponding approximation error. In the next lemma we analyse this approximation for the case when $f$ is a L\'evy process.
\begin{lemma}
\label{lemma-eLstochcont}
The sequence of approximation errors $\be_{I_{\Lb}}^{(h)}$ is i.i.d$.$ Moreover, for every $\omega\in\Omega$ and for every integer $n$ the approximation error $\be_{I_{\Lb},n}^{(h)}$ converges to zero as $h\to 0$. If, for some positive integer $k$, the absolute moment $\E\left\|\Lb(1)\right\|^{(k)_0}$ is finite, then $\E\left\|\be_{I_{\Lb},n}^{(h)}\right\|^k=O(h^{k/(k)_0})$, as $h\to 0$, where the constant implicit in the $O(\cdot)$ notation does not depend on $n$.
\end{lemma}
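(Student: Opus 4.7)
The plan is to rewrite the error in a form that exposes the Lévy increments directly, and then read off the three assertions.

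First, observe that since $I_{\Lb}(t)=\int_0^t\Lb(s)\dd s$, we have $\Delta_h^1[I_{\Lb}](n)=h^{-1}\int_n^{n+h}\Lb(s)\dd s$, and therefore
\begin{equation*}
\be_{I_{\Lb},n}^{(h)}=\frac{1}{h}\int_n^{n+h}\bigl[\Lb(s)-\Lb(n)\bigr]\dd s.
\end{equation*}
The change of variables $u=s-n$ together with the stationarity of the increments of $\Lb$ yields the equality in distribution $\be_{I_{\Lb},n}^{(h)}\eqd h^{-1}\int_0^h\Lb(u)\dd u$, independently of $n$.

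For the \textbf{i.i.d.\ property}, note that since $h^{-1}\in\N$ by assumption, we have $h\leq 1$, so the intervals $[n,n+h]$ for $n\in\N$ are pairwise disjoint. Since $\be_{I_{\Lb},n}^{(h)}$ is measurable with respect to $\sigma(\Lb(s)-\Lb(n):s\in[n,n+h])$, the independent increments property of $\Lb$ implies independence of the family; the stationary increments property, combined with the representation above, yields identical distributions.

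For the \textbf{pointwise convergence}, fix $\omega\in\Omega$ and $n\in\N$. Since the sample path $s\mapsto\Lb(s,\omega)$ is c\`adl\`ag, it is right-continuous at $n$: for every $\varepsilon>0$ there exists $\delta>0$ such that $\left\|\Lb(s,\omega)-\Lb(n,\omega)\right\|<\varepsilon$ whenever $s\in[n,n+\delta]$. Consequently, for every $h<\delta$,
\begin{equation*}
\left\|\be_{I_{\Lb},n}^{(h)}(\omega)\right\|\leq\frac{1}{h}\int_n^{n+h}\left\|\Lb(s,\omega)-\Lb(n,\omega)\right\|\dd s\leq\varepsilon,
\end{equation*}
proving $\be_{I_{\Lb},n}^{(h)}(\omega)\to 0$ as $h\to 0$.

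For the \textbf{moment bound}, I would apply Jensen's inequality to the convex function $\bx\mapsto\left\|\bx\right\|^k$ and the probability measure $h^{-1}\dd s$ on $[n,n+h]$ to obtain
\begin{equation*}
\left\|\be_{I_{\Lb},n}^{(h)}\right\|^k\leq\frac{1}{h}\int_n^{n+h}\left\|\Lb(s)-\Lb(n)\right\|^k\dd s,
\end{equation*}
and then invoke stationarity of the increments followed by Fubini's theorem to conclude
\begin{equation*}
\E\left\|\be_{I_{\Lb},n}^{(h)}\right\|^k\leq\frac{1}{h}\int_0^h\E\left\|\Lb(u)\right\|^k\dd u.
\end{equation*}
Since $\E\left\|\Lb(1)\right\|^{(k)_0}<\infty$, \Cref{prop-levymoments}\labelcref{prop-levymoments-general} gives $\E\left\|\Lb(u)\right\|^k=O(u^{k/(k)_0})$ as $u\to 0$, so the integral is $O(h^{k/(k)_0+1})$ and the bound is $O(h^{k/(k)_0})$, with a constant independent of $n$.

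No step presents a real obstacle: everything reduces to the distributional identity $\be_{I_{\Lb},n}^{(h)}\eqd h^{-1}\int_0^h\Lb(u)\dd u$, combined with c\`adl\`ag regularity and the previously established small-time moment asymptotics of \Cref{prop-levymoments}. The only point where some care is needed is verifying that the intervals of dependence do not overlap, which is why the assumption $h^{-1}\in\N$ (and hence $h\leq 1$) is used.
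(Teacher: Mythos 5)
Your proof is correct and follows essentially the same route as the paper: the same identity $\be_{I_{\Lb},n}^{(h)}=h^{-1}\int_n^{n+h}[\Lb(s)-\Lb(n)]\dd s$, right-continuity for the pointwise claim, independence and stationarity of increments for the i.i.d.\ claim, and a reduction of the $k$th moment to $h^{-1}\int_0^h\E\left\|\Lb(s)\right\|^k\dd s$ followed by \cref{prop-levymoments}. The only cosmetic difference is that you use Jensen's inequality where the paper uses the triangle inequality plus H\"older; both yield the identical intermediate bound.
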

\begin{proof}
We first observe that
\begin{align*}
\left\|I_{\Lb}(n+h)-I_{\Lb}(n)-h\Lb^{\mathscr{n}}(n)\right\|=&\left\|\int_n^{n+h}\left[\Lb(s)-\Lb(n)\right]\dd s\right\|\leq \int_n^{n+h}\left\|\Lb(s)-\Lb(n)\right\|\dd s.
\end{align*}
The right continuity of $t\mapsto \Lb(t)$ implies that for every integer $n$ and each $\epsilon>0$ there exists a $\delta_{\epsilon,n}$ such that $\left\|\Lb(n+t)-\Lb(n)\right\|\leq \epsilon$, for all $0\leq t\leq \delta_{\epsilon,n}$. This means that $\left\|I_{\Lb}(n+h)-I_{\Lb}(n)-h\Lb(t)\right\|\leq h\epsilon$, provided $h\leq \delta_{\epsilon,n}$. Dividing by $h$ thus proves $\be_{I_{\Lb},n}^{(h)}\to 0$. The proof also shows that $\be_{I_{\Lb},n}^{(h)}$ is a deterministic function of the increments $\left\{\Lb(s)-\Lb(n),n\leq s\leq n+h\right\}$. Since the increments of a L\'evy process are stationary and independent, this implies that $\be_{I_{\Lb}}^{(h)}$ is an i.i.d$.$ sequence.

For the second claim about the size of the absolute moments of $\be_{I_{\Lb},n}^{(h)}$ for small $h$ it is no restriction to assume that $n=0$. Successive application of the triangle inequality and H\"older's inequality with the dual exponent $k' $ determined by $1/k+1/k'=1$ shows that
\begin{align*}
\E\left\|\be_{I_{\Lb},0}^{(h)}\right\|^k = \frac{1}{h^k}\E\left\|\int_0^h{\Lb(s)\dd s}\right\|^k \leq&\frac{1}{h^k}\E\left(\int_0^h{\left\|\Lb(s)\right\|\dd s}\right)^k \leq\frac{1}{h^k}\E\left(\left(\int_0^h{\left\|\Lb(s)\right\|^k\dd s}\right)^{1/k}\left(\int_0^h{1\dd s}\right)^{1/k'}\right)^k.
\end{align*}
Using $k/k'=k-1$ it follows that
\begin{equation*}
\E\left\|\be_{I_{\Lb},0}^{(h)}\right\|^k\leq\frac{1}{h}\E\int_0^h{\left\|\Lb(s)\right\|^k\dd s}. 
\end{equation*}
Since $\left\|\Lb(s)\right\|^k$ is positive we can interchange the expectation and integral. By \cref{prop-levymoments}, $\E\left\|\Lb(s)\right\|^k$ is of order $O(s^{k/(k)_0})$ which implies that $\left\|\be_{I_{\Lb},0}^{(h)}\right\|^k=O(h^{k/(k)_0})$.
\end{proof}
\Cref{lemma-eLstochcont} was dedicated to the analysis of the error of approximating the first derivative of the integral of a L\'evy process. We will also need analogous results for higher order derivatives of iterated integrals of L\'evy processes. The proofs are similar in spirit and only technically more complicated. For a positive integer $\nu$ we generalize the notations \labelcref{eq-intf} to
\begin{equation}
\label{eq-intfnu}
I^\nu_f(t)=\int_0^t{I^{\nu-1}_f(s)\dd s},\quad I^1_f(t)=\int_0^t{f(s)\dd s},\quad\text{and}\quad\be_{I_f^\nu,n}^{\nu,(h)}\coloneqq \Delta_h^\nu\left[I^\nu_f\right](n)-f(n).
\end{equation}
Clearly, if the function $f$ has only countably many jump discontinuities then $\DD^\nu I^\nu[f](t)=f(t)$ almost everywhere. 
\begin{lemma}
\label{lemma-integrallevyapprox}
For every positive integer $\nu\geq 1$ and every integer $n$, the error $\be_{I_{\Lb}^\nu,n}^{\nu,(h)}$ converges to zero as $h\to 0$. If, moreover, $\E\left\|\Lb(1)\right\|^{(k)_0}$ is finite for some $k>0$, then $\E\left\|\be_{I_{\Lb}^\nu,n}^{\nu,(h)}\right\|^k=O(h^{k/(k)_0)})$ as $h\to 0$.
\end{lemma}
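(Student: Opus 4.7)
My plan is to first establish a useful closed-form expression for $\Delta_h^\nu[I^\nu_f](t)$ that exhibits it as an average of $f$-values in a small neighborhood of $t$, and then to exploit this representation together with right-continuity of $\Lb$ and the moment control furnished by \cref{prop-levymoments} to obtain both assertions simultaneously. The key identity I would prove first, by induction on $\nu$, is
\begin{equation*}
\Delta_h^\nu[I^\nu_f](t) \;=\; \frac{1}{h^\nu}\int_0^h\!\!\cdots\!\!\int_0^h f\bigl(t+s_1+\cdots+s_\nu\bigr)\,\dd s_1\cdots\dd s_\nu.
\end{equation*}
The base case $\nu=1$ is the trivial rewriting $h^{-1}[I^1_f(t+h)-I^1_f(t)]=h^{-1}\int_0^h f(t+s)\,\dd s$. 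For the inductive step, since the forward-difference operator factors as $\Delta_h^\nu=\Delta_h^1\circ\Delta_h^{\nu-1}$, I apply the induction hypothesis to $\Delta_h^{\nu-1}[I^\nu_f](t)=h^{-(\nu-1)}\int_{[0,h]^{\nu-1}}I^1_f(t+s_1+\cdots+s_{\nu-1})\,\dd s_1\cdots\dd s_{\nu-1}$ and use $I^1_f(a+h)-I^1_f(a)=\int_0^h f(a+s_\nu)\,\dd s_\nu$ inside the remaining $\Delta_h^1$, giving the claim.

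With this identity in hand, for $f=\Lb$ the error becomes
\begin{equation*}
\be_{I^\nu_{\Lb},n}^{\nu,(h)} \;=\; \frac{1}{h^\nu}\int_0^h\!\!\cdots\!\!\int_0^h\bigl[\Lb(n+s_1+\cdots+s_\nu)-\Lb(n)\bigr]\,\dd s_1\cdots\dd s_\nu.
\end{equation*}
For the almost-sure convergence, I fix $\omega$ and use right-continuity of $t\mapsto\Lb(t)$ at $n$: given $\epsilon>0$, there is $\delta>0$ such that $\|\Lb(n+s)-\Lb(n)\|<\epsilon$ for all $0\le s\le\delta$; if $h\le\delta/\nu$ the integrand is uniformly bounded in norm by $\epsilon$, so the average has norm at most $\epsilon$, yielding $\be_{I^\nu_{\Lb},n}^{\nu,(h)}\to 0$.

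For the moment bound, I apply the triangle inequality inside the average followed by Jensen's inequality with the probability measure $h^{-\nu}\dd s_1\cdots\dd s_\nu$ on $[0,h]^\nu$, obtaining
\begin{equation*}
\E\bigl\|\be_{I^\nu_{\Lb},n}^{\nu,(h)}\bigr\|^k \;\leq\; \frac{1}{h^\nu}\int_0^h\!\!\cdots\!\!\int_0^h\E\bigl\|\Lb(s_1+\cdots+s_\nu)\bigr\|^k\,\dd s_1\cdots\dd s_\nu,
\end{equation*}
using stationarity of the increments to drop the starting point $n$. On the domain of integration we have $s_1+\cdots+s_\nu\le\nu h$, so by \cref{prop-levymoments}\labelcref{prop-levymoments-general} (which applies since $\E\|\Lb(1)\|^{(k)_0}<\infty$) each integrand is at most $C(\nu h)^{k/(k)_0}$ for $h$ small, giving $\E\|\be_{I^\nu_{\Lb},n}^{\nu,(h)}\|^k=O(h^{k/(k)_0})$ with a constant independent of $n$.

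The only nontrivial step is establishing the closed-form identity for $\Delta_h^\nu[I^\nu_f]$; everything else is a straightforward application of Jensen and the moment asymptotics already proven in \cref{prop-levymoments}. The $\nu=1$ case recovers \cref{lemma-eLstochcont}, as it should.
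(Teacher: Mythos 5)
Your proof is correct, and it takes a genuinely different route from the paper's. The paper first proves a separate locality lemma (\cref{lemma-diffintiid}) that expresses $\be_{I_{\Lb}^\nu,n}^{\nu,(h)}$ as a one-dimensional integral of $f(n+t)-f(n)$ against an explicit polynomial kernel; obtaining that kernel requires a computation over a $(\nu-2)$-dimensional simplex together with the observation that $(\nu-1)$-fold forward differences annihilate polynomials of degree $\nu-2$, and the case $\nu=1$ has to be handled separately in \cref{lemma-eLstochcont}. Your identity
\begin{equation*}
\Delta_h^\nu[I^\nu_f](t)=\frac{1}{h^\nu}\int_0^h\!\!\cdots\!\!\int_0^h f(t+s_1+\cdots+s_\nu)\,\dd s_1\cdots\dd s_\nu
\end{equation*}
encodes the same essential information --- the error depends only on the increments of $f$ over $[n,n+\nu h]$ --- but as a $\nu$-fold average, proved by a two-line induction that works uniformly for all $\nu\geq 1$ and recovers \cref{lemma-eLstochcont} as the base case; the two representations are of course the same linear functional of $f$, yours being the form before the inner variables are integrated out. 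From there, replacing the paper's H\"older step by Jensen's inequality with respect to the uniform probability measure on $[0,h]^\nu$, and using stationarity of increments to make the constant independent of $n$, gives the stated rate via \cref{prop-levymoments} exactly as in the paper. What your approach buys is a shorter and more transparent derivation with no special case at $\nu=1$ and no combinatorial cancellation to verify; what it does not deliver is the paper's explicit single-integral kernel, which the paper never actually needs beyond the locality and i.i.d.\ statements that your representation yields just as readily. One shared caveat: both your Jensen step and the paper's H\"older step require $k\geq 1$, so the regime $0<k<1$ in the statement would need a small additional argument in either version; this is not a gap of your proof relative to the paper's.
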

\begin{proof}
Deferred to the appendix.
\end{proof}

With these auxiliary results finished, we turn to approximating derivatives of the multivariate CARMA process $\Y$. This is the first big step towards discretizing \cref{eq-recoveryDeltaL}.
\begin{proposition}
\label{prop-derivMCARMA}
Let $\Y$ be an $\Lb$-driven multivariate CARMA process satisfying \cref{assum-eigA}, let $n\geq 0$ be an integer and denote by $\be_{\Y,n}^{\nu,(h)} = \Delta_h^\nu[\Y](n) - \DD^\nu\Y(n)$ the error of approximating the $\nu$th derivative of $\Y$ by the forward differences defined in \cref{eq-forwarddifference}. Assume that, for some $k>0$, $\E\left\|\Lb(1)\right\|^{(k)_0}<\infty$. It then holds that:
\begin{enumerate}[i)]
 \item\label{prop-derivMCARMA-moments} If $1\leq\nu\leq p-q-2$, then $\E\left\|\be_{\Y,n}^{\nu,(h)}\right\|^k=O(h^k)$. If $\nu=p-q-1$, then $\E\left\|\be_{\Y,n}^{\nu,(h)}\right\|^k=O(h^{k/(k)_0})$.
 \item\label{prop-derivMCARMA-mixing} The sequence $\be_{\Y}^{\nu,(h)}$ is strictly stationary and strongly mixing with exponentially decaying mixing coefficients.
\end{enumerate}
\end{proposition}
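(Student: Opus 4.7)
The plan is to use the controller-canonical state-space form to write $\Delta_h^\nu[\Y](n) = \underline B\,\Delta_h^\nu[\X](n)$ and then, via the semigroup identity
\[
\X(n+ih) = \ee^{\A ih}\X(n) + \int_n^{n+ih}\ee^{\A(n+ih-s)}E_p\,\dd\Lb(s),
\]
to split the approximation error into a purely deterministic piece depending only on $\X(n)$ and a stochastic piece involving L\'evy increments on $[n,n+\nu h]$. Since $\DD^\nu\Y(n) = \underline B\A^\nu\X(n)$ for $\nu\leq p-q-1$ (which follows by repeatedly differentiating \labelcref{eq-statespacerepmodstate} and using the identity $\underline B\A^jE_p = B_{p-1-j}$, which vanishes for $j<p-q-1$), the deterministic contribution equals $\underline B\bigl([\Delta_h^\nu\ee^{\A\cdot}](0) - \A^\nu\bigr)\X(n)$. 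Expanding the matrix exponential and using $\sum_{i=0}^\nu(-1)^{\nu-i}\binom{\nu}{i}i^j = \nu!\,\delta_{j,\nu}$ for $j\leq\nu$ shows that the operator in parentheses has operator norm $O(h)$; since $\X(n)$ is infinitely divisible with finite $k$-th moment by \cref{lemma-momentsidrv} applied to its moving-average representation \labelcref{eq-Xmovingaverage}, this term contributes $O(h^k)$ to $\E\|\be_{\Y,n}^{\nu,(h)}\|^k$ uniformly in the admissible $\nu$.

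For the stochastic piece, I would expand $\ee^{\A(n+ih-s)} = \sum_{j\geq 0}\tfrac{\A^j(n+ih-s)^j}{j!}$ and once more exploit the vanishing of $\underline B\A^jE_p$ for $j<p-q-1$. This leaves, as leading term, the combination
\[
\frac{B_q}{(p-q-1)!}\cdot\frac{1}{h^\nu}\sum_{i=0}^\nu(-1)^{\nu-i}\binom{\nu}{i}\int_n^{n+ih}(n+ih-s)^{p-q-1}\,\dd\Lb(s),
\]
plus strictly higher-order analogues corresponding to $j>p-q-1$. For each such L\'evy integral I would compute its characteristic triplet via \cref{eq-transformtriplet}, obtaining drift, Gaussian, and small/large-jump components whose sizes are controlled by $\int |f|$, $\int|f|^2$, and $\int|f|^{(k)_0}$ respectively; substituting these into \cref{lemma-momentsidrv} with $k$ replaced by the next even integer $(k)_0$ and keeping only the leading exponent yields an $L^{(k)_0}$-bound of order $h^{(p-q-1)+1/(k)_0}$. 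Passing to $L^k$ via H\"older, dividing by $h^\nu$ and raising to the $k$-th power produces a total bound of order $h^{k(p-q-1-\nu)+k/(k)_0}$. This equals $O(h^{k/(k)_0})$ exactly when $\nu = p-q-1$, and for $1\leq\nu\leq p-q-2$ the exponent is at least $k$, whence the stated bound $O(h^k)$.

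Part (ii) is considerably shorter. Stationarity of $(\be_{\Y,n}^{\nu,(h)})_{n\in\Z}$ is immediate from the strict stationarity of $\Y$ (\labelcref{eq-Ymovingaverage}) together with the observation that $\be_{\Y,n}^{\nu,(h)}$ is a fixed measurable function of the vector $(\Y(n),\Y(n+h),\ldots,\Y(n+\nu h))$. For the mixing assertion I would invoke \citep{schlemmmixing2010}, where the authors establish that the sampled MCARMA process $(\Y(kh))_{k\in\Z}$ is exponentially strongly mixing; since measurable transformations of finitely many consecutive coordinates preserve the exponential mixing rate, the claim follows at once. The main obstacle I anticipate lies in part~(i), namely the careful bookkeeping required to control the Taylor remainder terms in the expansion of the matrix exponential and to verify that all four quantities $\|\bgamma\|$, $\|\Sigma\|^{1/2}$, $c_0$, $c_1$ appearing in \cref{lemma-momentsidrv} can be estimated uniformly in $n$ so as to give precisely the exponents claimed.
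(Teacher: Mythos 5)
Your part~i) is correct but follows a genuinely different route from the paper. The paper splits into two cases: for $1\leq\nu\leq p-q-2$ it uses the pathwise Taylor bound of \cref{lemma-propforwarddifference} to get $\left\|\be_{\Y,n}^{\nu,(h)}\right\|\leq h\sup_{n\leq s\leq n+\nu h}\left\|\DD^{\nu+1}\Y(s)\right\|$ and then controls the $k$th moment of that supremum via \cref{lemma-momentssup}; for $\nu=p-q-1$ it writes $\X^{(q+1)}$ through iterated integrals of $\X$ and $\Lb$ and isolates the non-smooth contribution in the quantity $\be_{I_{\Lb}^{p-q-1},n}^{p-q-1,(h)}$ analysed in \cref{lemma-integrallevyapprox}. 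You instead use the variation-of-constants formula to split $\Delta_h^\nu[\Y](n)-\DD^\nu\Y(n)$ into a deterministic operator of norm $O(h)$ acting on $\X(n)$ plus stochastic integrals of the kernel $g(u)=\underline B\ee^{\A u}E_p=B_q u^{p-q-1}/(p-q-1)!+O(u^{p-q})$ over intervals of length $O(h)$, whose moments you bound by computing characteristic triplets via \cref{eq-transformtriplet} and feeding them into \cref{lemma-momentsidrv}. I checked your exponent bookkeeping: the dominant contribution to $\E\|J\|^{(k)_0}$ is indeed the large-jump term of order $h^{(p-q-1)(k)_0+1}$, giving the $L^{(k)_0}$-rate $h^{(p-q-1)+1/(k)_0}$ and hence $h^{k(p-q-1-\nu)+k/(k)_0}$ after H\"older, division by $h^\nu$ and raising to the $k$th power; this reproduces both stated rates. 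Your approach is more unified (it treats all $\nu$ at once and avoids \cref{lemma-momentssup,lemma-integrallevyapprox}), at the price of the triplet computations; the paper's approach is more modular and reuses its auxiliary lemmas.

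In part~ii) there is a genuine, though repairable, gap: $\be_{\Y,n}^{\nu,(h)}$ is \emph{not} a measurable function of the finite vector $\left(\Y(n),\Y(n+h),\ldots,\Y(n+\nu h)\right)$, because the term $\DD^{\nu}\Y(n)$ is a derivative and hence depends on the values of $\Y$ in an entire neighbourhood of $n$, not on finitely many sampled coordinates. For the same reason, invoking the exponential strong mixing of the \emph{sampled} sequence $(\Y(kh))_{k\in\Z}$ from \citep{schlemmmixing2010} and appealing to ``transformations of finitely many consecutive coordinates'' does not cover the derivative term. The correct statement, and the one the paper proves, is that $\be_{\Y,n}^{\nu,(h)}$ is measurable with respect to $\sigma\left(\Y(t):n\leq t\leq n+\nu h\right)$, since $\DD^\nu\Y(n)$ is the $\omega$-wise limit of the difference quotients $\Delta_s^\nu[\Y](n)$ as $s\to 0^+$ and pointwise limits of measurable maps are measurable. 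Mixing then follows from the exponential strong mixing of the \emph{continuous-time} process $\Y$ (\cref{lemma-propMCARMA}) together with \cref{lemma-mixingfunctional}, and strict stationarity follows by passing to the limit $s\to 0^+$ in the finite-dimensional distributions of $\Delta_h^\nu[\Y]-\Delta_s^\nu[\Y]$ and using uniqueness of weak limits. Your conclusion is correct, but the justification must be routed through the path-segment $\sigma$-algebras rather than through finitely many discrete observations.
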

\begin{proof}
We first prove the assertions \labelcref{prop-derivMCARMA-moments} about the behaviour of the absolute moments of $\be_{\Y,n}^{\nu,(h)}$ for small values of $h$. If $1\leq\nu\leq p-q-2$ it follows from \cref{lemma-propMCARMA} that the paths of $\Y$ are at least $\nu+1$ times differentiable; therefore, \cref{lemma-propforwarddifference} implies that $\left\|\be_{\Y,n}^{\nu,(h)}\right\|\leq h\sup_{n\leq s\leq n+\nu h}\left\|\DD^{\nu+1}\Y(s)\right\|$. To prove the claim it is thus sufficient to show that $\E \sup_{n\leq s\leq n+\nu h}\left\|\DD^{\nu+1}\Y(s)\right\|^k<\infty$. By the defining observation equation \labelcref{eq-statespacerepmodobservation}, $\Y$ is  a linear combination of the first $q+1$ $m$-blocks of the state process $\X$; the state equation \labelcref{eq-statespacerepmodstate} implies $\DD\X^{i}=\X^{i+1}$, $i=1,\ldots,p-1$, and since $\nu$ is assumed to be no bigger than $p-q-2$ it follows that $\DD^{\nu+1}\Y$ is a linear combination of the first $p-1$ $m$-blocks of $\X$, say $\DD^{\nu+1}\Y=\
Lambda\X$, for some matrix $\Lambda\in M_{d,pm}(\R)$. We can then apply \cref{lemma-momentssup} to estimate
\begin{align*}
\E \sup_{n\leq s\leq n+\nu h}\left\|\DD^{\nu+1}\Y(s)\right\|^k \leq \left\|\Lambda\right\|^k\E \sup_{n\leq s\leq n+\nu h}\left\|\X(s)\right\|^k<\infty,
\end{align*}
which proves the first claim. If $\nu=p-q-1$ we start again from the observation that $\Y$ is a linear combination of the first $q+1$ $m$-blocks of $\X$, namely,
\begin{equation*}
\Y(t) = \underline{B}_q\X_q(t)+B_q\X^{(q+1)}(t),\quad t\in\R,\quad \underline B_q = \left[\begin{array}{ccc}B_0 & \cdots & B_{q-1}\end{array}\right].
\end{equation*}
By solving the last $p-q+1$ block-rows of the state equation \labelcref{eq-statespacerepmodstate} one can express $\X^{(q+1)}$ as
\begin{equation*}
\X^{(q+1)}(t) = \frac{t^{p-q-1}}{(p-q-1)!}\X^{(p)}(0)-\underline{A}I^{p-q}_{\X}(t)+I^{p-q-1}_{\Lb}(t),
\end{equation*}
where the notation $I_f^\nu$ for the $\nu$-fold iterated integral of a function $f$ has been introduced in \cref{eq-intfnu}. By linearity and the fact that $\Delta_h^\nu[\mathfrak{p}]-\DD^\nu\mathfrak{p}=0$ for polynomials $\mathfrak{p}$ of degree $\nu$ (\cref{lemma-propforwarddifference},\labelcref{lemma-propforwarddifference-smooth}), it follows that
\begin{align*}
\be_{\Y,n}^{p-q-1,(h)} =& \Delta_h^{p-q-1}[\Y](n) - \DD^{p-q-1}\Y(n) \\\
= & \underline{B}_q\left[\Delta_h^{p-q-1}[\X_q](n)-\DD^{p-q-1}\X_q(n)\right]\\
	&- B_q\underline{A}\left[\Delta_h^{p-q-1}\left[I^{p-q}_{\X}\right](n)-\DD^{p-q-1}I^{p-q}_{\X}(n)\right]\\
	&+ B_q\left[ \Delta_h^{p-q-1}\left[I^{p-q-1}_{\Lb}\right](n)-\DD^{p-q-1}I^{p-q-1}_{\Lb}(n)\right].
\end{align*}
Both $\X_q$ (by \cref{lemma-propMCARMA}) and $I^{p-q}_{\X}$ are $p-q$ times differentiable so we can apply \cref{lemma-propforwarddifference},\labelcref{lemma-propforwarddifference-nonsmooth} to bound the differences in the first two lines of the last display by $h$ times the supremum of the $(p-q)$th derivative of $\X_q$ and $I^{p-q}_{\X}$, respectively. The contribution from the last line is the approximation error for the $(p-q-1)$th derivative of the $(p-q-1)$-fold iterated integral of the L\'evy process $\Lb$ which has been investigated in \cref{lemma-integrallevyapprox}. We thus obtain that
\begin{equation*}
\left\|\be_{\Y,n}^{p-q-1,(h)}\right\|\leq h\left[\left\|\underline{B}_q\right\|\sup_{n\leq t\leq n+(p-q-1)h}\left\|\DD^{p-q}\X_q(t)\right\|+\left\|B_q\right\|\left\|\underline{A}\right\|\sup_{n\leq t\leq n+(p-q-1)h}\left\|\X(t)\right\|\right]+\left\|B_q\right\|\left\|\be_{I_{\Lb}^{p-q-1},n}^{p-q-1,(h)}\right\|.
\end{equation*}
As before, one shows that the first term has finite $k$th moments which is of order $O(h^k)$. The second term has been shown in \cref{lemma-integrallevyapprox} to have finite $k$th moment of order $O(h^{k/(k)_0})$ which dominates the first term for $h<1$; this completes the proof of \labelcref{prop-derivMCARMA-moments}.

In order to prove that the sequence $\be_{\Y}^{\nu,(h)}$ is strongly mixing, it is enough, by virtue of \cref{lemma-propMCARMA},\labelcref{lemma-propMCARMA-mixing} and \cref{lemma-mixingfunctional}, to show that the approximation error $\be_{\Y,n}^{\nu,(h)}$ is measurable with respect to $\mathscr{Y}_n^{n+\nu h}$, the $\sigma$-algebra generated by $\left\{\Y(t):n\leq t\leq \nu h\right\}$. Clearly, $\Delta_h^\nu[\Y](t)$ is measurable with respect to the $\sigma$-algebra generated by $\{\Y_t,\Y_{t+h},\ldots, \Y_{t+\nu h}\}$. By the definition of derivatives as the limit of different quotients and the assumed differentiability of $t\mapsto \Y(t)$, the derivative $D_t^\nu \Y_t$ is the $\omega$-wise limit, as $s$ goes to zero, of the functions $\omega\mapsto \Delta_s^\nu[\Y(\omega)](t)$. Each of these functions is measurable with respect to $\sigma(\Y_{t},\Y_{t+s},\ldots,\Y_{t+\nu s})$, and therefore in particular with respect to the larger $\sigma$-algebra $\mathscr{Y}_n^{n+\nu h}$. Since pointwise limits of 
measurable functions are measurable (\citep[Theorem 1.92]{klenke2008probability}), the claim follows.

The claim that the sequence $\be_{\Y}^{\nu,(h)}$ is strictly stationary is a consequence of the fact that the multivariate CARMA process $\Y$ is strictly stationary (\cref{lemma-propMCARMA},\labelcref{lemma-propMCARMA-stationary}). By the definition of stationarity it is enough to show that for every natural number $K$, all indices $n_1,\ldots,n_K\in\Z$ and every integer $k$, the two arrays $(\be_{\Y,n_1}^{\nu,(h)},\ldots,\be_{\Y,n_K}^{\nu,(h)})$ and $(\be_{\Y,n_1+k}^{\nu,(h)},\ldots,\be_{\Y,n_K+k}^{\nu,(h)})$ have the same distribution. We first observe that for each $n\in\Z$ and each $\omega\in\Omega$, $\be_{\Y,n}^{\nu,(h)}=\lim_{s\to 0^+}\be_{\Y,n}^{\nu,(h,s)}$, where $\be_{\Y,n}^{\nu,(h,s)}\coloneqq \Delta_h^\nu[\Y](n) - \Delta_s^\nu[\Y](n)$. In particular, since $\omega$-wise convergence implies convergence in distribution, it holds that
\begin{align*}
(\be_{\Y,n_1}^{\nu,(h,s)},\ldots,\be_{\Y,n_K}^{\nu,(h,s)})\convd& (\be_{\Y,n_1}^{\nu,(h)},\ldots,\be_{\Y,n_K}^{\nu,(s)}),\\
(\be_{\Y,n_1+k}^{\nu,(h,s)},\ldots,\be_{\Y,n_K+k}^{\nu,(h,s)})\convd& (\be_{\Y,n_1+k}^{\nu,(h)},\ldots,\be_{\Y,n_K+k}^{\nu,(s)}),
\end{align*}
as $s$ tends to zero. For every finite $s$, the strict stationarity of $\Y$ implies that $(\be_{\Y,n_1}^{\nu,(h,s)},\ldots,\be_{\Y,n_K}^{\nu,(h,s)})$ is equal in distribution to $(\be_{\Y,n_1+k}^{\nu,(h,s)},\ldots,\be_{\Y,n_K+k}^{\nu,(h,s)})$. The assertion then follows from the fact that in Polish spaces weak limits are uniquely determined (\citep[Remark 13.13]{klenke2008probability}).
\end{proof}

\subsection{Approximation of integrals}
\label{subsection-integrals}
This section is devoted to the approximations of the integrals appearing in \cref{eq-recoveryDeltaL}, namely $\int_{n-1}^n{\Y(s)\dd s}$ and $\int_{n-1}^n{\ee^{\B(n-s)}\Y(s)\dd s}$. One of the simplest approximations for definite integrals is the trapezoidal rule, see, e.g., \citep[Chapter 9]{Deuflhard2008} for an introduction to the topic of numerical integration. For any function $f:\R\to M$ with values in a metric space $M$ it is defined as
\begin{equation}
\label{eq-DefTrapez}
T_{[a,b]}^Kf \coloneqq \frac{b-a}{K}\left[\frac{f(a)+f(b)}{2}+\sum_{k=1}^{N-1}{f(n-1+k\frac{b-a}{K})}\right],\quad K\in\N.
\end{equation}
and meant to approximate the definite integral $\int_a^b{f(s)\dd s}$. We will usually set $[a,b]=[n-1,n]$, $n\in\N$ and $=h^{-1}$. It is clear that $T_{[n-1,n]}^{h^{-1}}f$ can be computed from knowledge of the values of $f$ on the discrete time grid $(0,h,2h,\ldots)$. We shall now derive properties of the approximation error of convolutions of vector-valued functions with matrix-valued kernels. For any compatible functions $f:[0,\infty]\to\R^d$ and $g:[0,1]\to M_d(\R)$ we use the notation 
\begin{align}
\label{eq-trapezoidalerror}
\beps_{g\circ f,n}^{(h)} =&T_{[n-1,n]}^{h^{-1}}g(n-\cdot)f(\cdot) - \int_{n-1}^n{g(n-s)f(s)\dd s}
\end{align}
for the difference between the exact value of the convolution integral and the one obtained from the trapezoidal approximation with sampling interval $h$. 
In the next proposition we analyse this approximation error if $f$ is a multivariate CARMA process; this is the second big step towards discretizing \cref{eq-recoveryDeltaL}. 
\begin{proposition}
\label{prop-approxintegral}
Assume that $\Lb$ is a L\'evy process. Let $\Y$ be a $d$-dimensional $\Lb$-driven MCARMA process satisfying \cref{assum-eigA}, let $F:[0,1]\to M_d(\R)$ a twice continuously differentiable function and denote by $\beps_{F\circ\Y,n}^{(h)}$ the approximation error of the trapezoidal rule, defined in \cref{eq-trapezoidalerror}. If $\E\left\|\Lb(1)\right\|^k$ is finite then $\E\left\|\beps_{F\circ\Y,n}^{(h)}\right\|^k=O(h^{2k})$, as $h\to 0$. Moreover, the sequence $\beps_{F\circ\Y}^{(h)}$ is strictly stationary and strongly mixing.
\end{proposition}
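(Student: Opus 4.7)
The plan is to convert the trapezoidal error into a single stochastic integral against $\dd\Lb$ and then to bound its deterministic kernel. Starting from the moving--average representation \eqref{eq-Ymovingaverage} $\Y(s)=\int_{-\infty}^s g(s-u)\,\dd\Lb(u)$ with $g(t)=\underline{B}\ee^{\A t}E_pI_{[0,\infty)}(t)$, I would apply the Fubini theorem for L\'evy integrals (\cref{theorem-fubini}) separately to the exact integral $\int_{n-1}^n F(n-s)\Y(s)\,\dd s$ and to the trapezoidal sum $T_{[n-1,n]}^{h^{-1}}F(n-\cdot)\Y(\cdot)$. This rewrites each as a stochastic integral $\int_{-\infty}^n K(n,u)\,\dd\Lb(u)$ respectively $\int_{-\infty}^n K^{(h)}(n,u)\,\dd\Lb(u)$ with deterministic kernels, so that
\[
\beps^{(h)}_{F\circ\Y,n}=\int_{-\infty}^n\bigl[K^{(h)}(n,u)-K(n,u)\bigr]\,\dd\Lb(u),
\]
where the integrand is precisely the composite trapezoidal quadrature error on $[n-1,n]$ applied to the deterministic function $s\mapsto F(n-s)g(s-u)$.

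For $u\leq n-1$ this function is $C^\infty$ on $[n-1,n]$, with derivatives bounded by $C\ee^{-\alpha(n-1-u)}$ for some $\alpha>0$ by the stability of $\A$ (\cref{assum-eigA}) and the twice continuous differentiability of $F$; the classical Euler--Maclaurin bound then yields $\|K^{(h)}(n,u)-K(n,u)\|\leq Ch^2\ee^{-\alpha(n-1-u)}$. For $u\in[n-1,n]$ the factor $g(s-u)$ has a jump at $s=u$, so I would split the integral at the grid node $n-1+\lceil(u-(n-1))/h\rceil h$ and match the trapezoidal sum against the standard rule on the reduced smooth subinterval, recovering a uniform bound $\|K^{(h)}(n,u)-K(n,u)\|\leq Ch^2$ after carefully accounting for the boundary weight.

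The kernel estimate then permits an application of \cref{lemma-momentsidrv} to the infinitely divisible law of $\beps^{(h)}_{F\circ\Y,n}$, whose characteristic triplet is computed from $K^{(h)}-K$ via the Rajput--Rosi\'nski formulas \eqref{eq-transformtriplet}. The assumption $\E\|\Lb(1)\|^k<\infty$ supplies the required moment bounds on $\nuL$, and the exponential decay of $K^{(h)}-K$ in $u$ makes all the integrals in \eqref{eq-transformtriplet} finite; combined with the uniform $O(h^2)$-smallness of the kernel, this yields $\E\|\beps^{(h)}_{F\circ\Y,n}\|^k=O(h^{2k})$. Strict stationarity and strong mixing of $(\beps^{(h)}_{F\circ\Y,n})_{n\in\Z}$ then follow exactly as for the sequence $\be_{\Y}^{\nu,(h)}$ in the proof of \cref{prop-derivMCARMA}, by noting that $\beps^{(h)}_{F\circ\Y,n}$ is measurable with respect to $\sigma\{\Y(t):t\in[n-1,n]\}$ and invoking the stationarity and exponential strong mixing of $\Y$. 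The most delicate step is the kernel analysis for $u\in[n-1,n]$: the jump of $g$ at $s=u$ would naively cause only an $O(h)$ loss of accuracy, and exploiting the full $C^2$-regularity of $F$ together with the relative alignment of the grid is essential to recover the claimed second-order rate.
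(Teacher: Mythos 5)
Your proposal follows the same route as the paper's proof: pass to the moving-average representation \labelcref{eq-Ymovingaverage}, use \cref{theorem-fubini} to turn the quadrature error into a stochastic integral against $\dd\Lb$ with a deterministic kernel, split at $u=n-1$, bound the kernel in the two regimes, and feed the resulting characteristic triplet into \cref{lemma-momentsidrv}; the stationarity and mixing arguments are also the paper's. The only organisational difference is that the paper factors the contribution from $u\leq n-1$ through the state vector, writing it as $\Gamma^{(h)}\X(n-1)$ with $\|\Gamma^{(h)}\|\leq Ch^2$ by \cref{prop-trapez}, whereas you keep an exponentially decaying kernel and invoke \cref{lemma-momentsidrv} for the whole integral over $(-\infty,n]$; these are equivalent.

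There is, however, a genuine gap at precisely the step you single out as delicate, and it cannot be closed in the form you describe. For $u\in(n-1,n)$ the kernel difference is the trapezoidal error for $s\mapsto F(n-s)g(s-u)$, a function that vanishes for $s<u$ and is smooth for $s>u$ with $u$ generically not a grid point. No accounting for the boundary weight yields a pointwise $O(h^2)$ bound: already for $F\equiv1$ and $g\equiv I_{[0,\infty)}$ the error equals $h\left(\tfrac12-\theta\right)$, where $\theta h$ is the distance from $u$ to the next grid point, so the uniform bound is exactly of order $h$. (The paper's proof asserts $\|G^{(h)}(t)\|\leq Ch^2$ for the kernel \labelcref{eq-DefGh} by appeal to \cref{prop-trapez}, which requires a twice differentiable integrand and does not apply here either; you have reproduced, rather than repaired, the weak point of the published argument. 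For $p-q=1$ the process $\Y$ itself has jumps, and a single jump at a non-grid location already forces an error of order $h$.) With the correct uniform bound $\|G^{(h)}(t)\|\leq Ch$ one obtains $\int_0^1\|G^{(h)}(t)\|^r\dd t=O(h^r)$, and \cref{lemma-momentsidrv} then gives $\E\|\beps_{F\circ\Y,n}^{(h)}\|^k=O(h^k)$ rather than $O(h^{2k})$. This weaker rate is still a positive power of $h$ and suffices for \cref{theorem-Levyapprox} and all downstream results, which only use an $O(h^{1/2})$ bound, but your argument as written does not deliver the second-order rate you (and the proposition) claim.
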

\begin{proof}
By the definition of $\beps_{F\circ \Y}^{(h)}$ (\cref{eq-DefTrapez,eq-trapezoidalerror}) we can write
\begin{equation*}
\beps_{F\circ \Y,n}^{(h)} = h\sum_{i=0}^{h^{-1}}{\alpha_i^{(h)}\Y(n-1+ih)} - \int_{n-1}^n{F(n-s)\Y(s)\dd s},
\end{equation*}
where
\begin{equation*}
\alpha_0^{(h)} = \frac{F(1)}{2},\quad \alpha_{h^{-1}} = \frac{F(0)}{2},\quad \alpha_i^{(h)} = F(1-ih),\quad i=1,\ldots h^{-1}-1.
\end{equation*}
Using Dirac's $\delta$-distribution, which is defined by the property that $\int f(x)\delta_{x_0}(x)\dd x=f(x_0)$ for all compactly supported smooth functions $f$, as well as the moving average representation \labelcref{eq-Ymovingaverage} of $\Y$ we obtain that
\begin{align*}
 \beps_{F\circ \Y}^{(h)} =& \int_{n-1}^n\left[\sum_i\alpha_i^{(h)}\delta_{n-1+ih}(s) - F(n-s)\right]\Y(s)\dd s\\
  =& \int_{n-1}^n\left[h\sum_i\alpha_i^{(h)}\delta_{n-1+ih}(s) - F(n-s)\right]\int_{-\infty}^s{\underline{B} \ee^{\A(s-u)}E_p \dd\Lb(u)}\dd s.
\end{align*}
\Cref{theorem-fubini} allows us to interchange the order of integration so that we obtain
\begin{align*}
 \beps_{F\circ \Y,n}^{(h)} =& \int_{-\infty}^n\int_{\max\{u,n-1\}}^n\left[h\sum_i\alpha_i^{(h)}\delta_{n-1+ih}(s) - F(n-s)\right]\underline{B} \ee^{\A(s-u)}E_p \dd s\dd\Lb(u)\\
  =& \int_{-\infty}^{n-1}\int_{n-1}^n\left[-h\sum_i\alpha_i^{(h)}\delta_{n-1+ih}(s) - F(n-s)\right]\underline{B} \ee^{\A(s-u)}E_p \dd s\dd\Lb(u)\\
  &\qquad + \int_{n-1}^n\int_u^n\left[h\sum_i\alpha_i^{(h)}\delta_{n-1+ih}(s) - F(n-s)\right]\underline{B} \ee^{\A(s-u)}E_p \dd s\dd\Lb(u).
\end{align*}
With the notations
\begin{align}
\label{eq-DefGammah} \Gamma^{(h)} \coloneqq& \int_0^1\left[-h\sum_i\alpha_i^{(h)}\delta_{ih}(s) - F(1-s)\right]\underline{B} \ee^{\A s} \dd s\quad\text{and}\\
\label{eq-DefGh} G^{(h)}:&\begin{cases}
    [0,1]&\to M_{d,m}(\R),\\
    t&\mapsto \int_0^{t}\left[h\sum_i\alpha_i^{(h)}\delta_{t-1+ih}(s) - F(t-s)\right]\underline{B} \ee^{\A s} \dd s\, E_p,
   \end{cases}
\end{align}
we can rewrite the previous display as
\begin{equation}
\label{eq-bepsFcircY}
\beps_{F\circ \Y,n}^{(h)} = \Gamma^{(h)} \X(n-1) + \int_{n-1}^n G^{(h)}(n-u)d\Lb(u),
\end{equation}
where we have used the moving average representation \labelcref{eq-Xmovingaverage} of the state vector process $\X$. This equation and the strict stationarity of $\X$ asserted in \cref{lemma-propMCARMA},\labelcref{lemma-propMCARMA-stationary} immediately imply that the sequence $\beps_{F\circ \Y}^{(h)}$ is strictly stationary and strongly mixing. By \cref{prop-trapez} there exists a constant $C$ such that $\left\|\Gamma^{(h)}\right\|\leq C h^2$ and $\left\|G^{(h)}(t)\right\|\leq C h^2$ for all $t\in[0,1]$, which implies that
\begin{equation*}
\E\left\|\beps_{F\circ \Y}^{(h)}\right\|^k\leq h^{2k}C^k  2^k\E\left\|\X(n-1)\right\|^k+2^k\E\left\|\int_{n-1}^n G^{(h)}(n-u)\dd\Lb(u)\right\|^k.
\end{equation*}
The $k$th moment of $\X(n-1)$ is finite by \cref{lemma-propMCARMA},\labelcref{lemma-propMCARMA-moments}, so it suffices to prove that the second term is of order $O(h^{2k})$. To this end we use the fact that $\int_{n-1}^n G^{(h)}(n-u)\dd\Lb(u)$ is an infinitely divisible random variable whose characteristic triplet $(\bgamma_G^{(h)},\Sigma_G^{(h)},\nu_G^{(h)})$ can be expressed explicitly in terms of the characteristic triplet $(\bgammaL,\Sigma_{\Lb},\nuL)$ of the L\'evy process $\Lb$. Using the explicit transformation rules \labelcref{eq-transformtriplet} one sees that the condition $\left\|G^{(h)}(s)\right\|_{L^\infty([0,1],\lambda)}=O(h^2)$ implies that 
\begin{align*}
\left\|\bgamma^{(h)}_G\right\|=O(h^2),\quad& \left\|\Sigma^{(h)}_G\right\|=O(h^4),\\
\int_{\left\|\bx\right\|< 1}\left\|\bx\right\|^r\nu^{(h)}_G(\dd\bx) =&O(h^{2r}),\quad r=2,3\ldots,\\
\int_{\left\|\bx\right\|\geq 1}\left\|\bx\right\|^r\nu^{(h)}_G(\dd\bx) =&O(h^{2r}),\quad r=2,\ldots,k,
\end{align*}
so that we can apply \cref{lemma-momentsidrv} to conclude that $\E\left\|\int_{n-1}^nG(n-u)\dd\Lb(u)\right\|^k=O(h^{2k})$.
\end{proof}
It remains to estimate $\X_q(n)$. In view of the AR(1) structure given in \cref{eq-Xqnrecursion} we compute estimates
\begin{equation}
\label{eq-DefhatXhq}
\hat\X^{(h)}_q(n)=\ee^{\B}\hat\X^{(h)}_q(n-1)+\hat I^{(h)}_n,\quad\hat\X^{(h)}_q(0)=\hat\X^{(h)}_{q,0},\quad n\geq 1,
\end{equation}
where $\hat I^{(h)}_n=T_{[n-1,n]}^{h^{-1}}\ee^{\B(n-\cdot)}E_q\Y(\cdot)$ is the trapezoidal rule approximation to $\int_{n-1}^n{\ee^{\B(n-s)}E_q\Y(s)\dd s}$ and $\hat\X^{(h)}_{q,0}$ is a deterministic or random initial value. We introduce the notation
\begin{equation}
\label{eq-DefbeXn}
\be_{\X,n}^{(h)}=\hat\X_q(n)-\X_q(n).
\end{equation}
It is easy to see that the sequence $\be_{\X}^{(h)}$ satisfies $\be_{\X,n}^{(h)}=\ee^{\B}\be_{\X,n-1}^{(h)}+\beps_{F\circ\Y,n}^{(h)}$, $n\in\N$, where $F:t\mapsto \ee^{\B t}E_q$ and $\beps_{F\circ\Y,n}^{(h)}$ is of the form analysed in \cref{prop-approxintegral}. For the following result we recall the notion of an absolutely continuous measure. By Lebesgue's decomposition theorem (\citep[Theorem 7.33]{klenke2008probability}), every measure $\mu$ on $\R^m$ can be uniquely decomposed as $\mu=\mu_c+\mu_s$, where $\mu_c$ and $\mu_s$ are absolutely continuous and singular, respectively, with respect to $m$-dimensional Lebesgue measure. If $\mu_c$ is not the zero measure we say that $\mu$ has a non-trivial absolutely continuous component.
\begin{proposition}
\label{prop-eXn}
Assume that $\Lb$ is a L\'evy process. Let $\Y$ be a $d$-dimensional $\Lb$-driven MCARMA process satisfying \cref{assum-eigA,assum-eigB}. The sequence $\be_{\X}^{(h)}$ defined by \cref{eq-DefbeXn,eq-DefhatXhq} converges almost surely to a stationary and ergodic sequence which is independent of $\hat\X^{(h)}_{q,0}$. If, for some integer $k$, $\E\left\|\Lb(1)\right\|^k$ is finite, then the absolute moment $\E\left\|\be_{\X,n}^{(h)}\right\|^k$ is of order $O(h^{2k})$ as $h\to 0$. If, moreover, the distribution of the random variable
\begin{equation}
\label{eq-DefinttildeG}
\int_0^1\widetilde G(1-s)\dd\Lb(s),\quad  \widetilde G(s)=\left(\begin{array}{cc}G(s)^T &(\exp(\A s)E_p)^T\end{array}\right)^T,\quad \text{$G(s)$ defined in \cref{eq-DefGh}},
\end{equation}
has a non-trivial absolutely continuous component, then the process $\be_{\X}^{(h)}$ is exponentially strongly mixing.
\end{proposition}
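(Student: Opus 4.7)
The recursion $\be_{\X,n}^{(h)} = \ee^{\B}\be_{\X,n-1}^{(h)} + \beps_{F\circ\Y,n}^{(h)}$ (with $F(t) = \ee^{\B t}E_q$), the decomposition \labelcref{eq-bepsFcircY} for $\beps_{F\circ\Y,n}^{(h)}$, and the integrated state equation $\X(n) = \ee^{\A}\X(n-1) + \int_{n-1}^n \ee^{\A(n-u)}E_p\,\dd\Lb(u)$ show that the joint process $Z_n = \bigl((\be_{\X,n}^{(h)})^T, \X(n)^T\bigr)^T$ is an affine Markov chain
\begin{equation*}
Z_n = \tilde M Z_{n-1} + \eta_n, \qquad \tilde M = \begin{pmatrix} \ee^{\B} & \Gamma^{(h)} \\ 0 & \ee^{\A} \end{pmatrix}, \qquad \eta_n = \int_{n-1}^n \tilde G(n-u)\,\dd\Lb(u).
\end{equation*}
The innovations $\eta_n$ are i.i.d.\ by stationarity and independence of increments of $\Lb$, and each has the same law as $\int_0^1 \tilde G(1-s)\,\dd\Lb(s)$. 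Because both $\A$ and $\B$ have eigenvalues with strictly negative real parts (\cref{assum-eigA} and \cref{lemma-OUXq}), the block upper triangular matrix $\tilde M$ has spectral radius strictly less than one.

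Iterating the recursion for $\be_{\X,n}^{(h)}$ yields $\be_{\X,n}^{(h)} = \ee^{n\B}\be_{\X,0}^{(h)} + \sum_{j=0}^{n-1} \ee^{j\B}\beps_{F\circ\Y,n-j}^{(h)}$, and the natural candidate limit is the MA$(\infty)$ process $\tilde\be_{\X,n}^{(h)} = \sum_{j=0}^{\infty} \ee^{j\B}\beps_{F\circ\Y,n-j}^{(h)}$. Since $\|\ee^{j\B}\| \leq C\lambda^j$ for some $\lambda \in (0,1)$ and since $\|\beps_{F\circ\Y,n-j}^{(h)}\|_{L^k}$ is uniformly bounded by \cref{prop-approxintegral}, this series converges almost surely and in $L^k$. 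The difference $\be_{\X,n}^{(h)} - \tilde\be_{\X,n}^{(h)} = \ee^{n\B}\bigl(\be_{\X,0}^{(h)} - \tilde\be_{\X,0}^{(h)}\bigr)$ decays geometrically, so convergence to $\tilde\be_{\X}^{(h)}$ is almost sure and the limit does not depend on $\hat\X_{q,0}^{(h)}$. Strict stationarity of $\tilde\be_{\X}^{(h)}$ is inherited from that of $\beps_{F\circ\Y}^{(h)}$, and ergodicity follows since strong mixing (established in \cref{prop-approxintegral}) implies ergodicity and ergodicity is preserved under shift-equivariant measurable maps. Applying the triangle inequality in $L^k$ to the MA$(\infty)$ representation together with \cref{prop-approxintegral} gives $\|\tilde\be_{\X,n}^{(h)}\|_{L^k} \leq C h^2 \sum_{j=0}^\infty \lambda^j = O(h^2)$, whence $\E\|\be_{\X,n}^{(h)}\|^k = O(h^{2k})$.

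The main obstacle is the exponential strong mixing claim, for which the plan is to work at the level of the joint Markov chain $Z_n$. I would invoke a Meyn--Tweedie/Doeblin-type geometric ergodicity theorem for affine Markov chains of the form $Z_n = \tilde M Z_{n-1} + \eta_n$: stability of $\tilde M$ together with the finite-moment assumption on $\Lb$ delivers a geometric drift (Lyapunov) condition, while the non-trivial absolutely continuous component of the law of $\eta_n$ provides the small-set minorization needed to conclude geometric ergodicity, hence exponentially decaying $\beta$-mixing coefficients for the stationary version of $Z_n$; compare \citep{masuda2004onmultidimensionalOU,mokkadem1988mixing,schlemmmixing2010} for analogous arguments in the continuous-time OU and CARMA settings. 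Since $\be_{\X,n}^{(h)}$ is a measurable (in fact linear) projection of $Z_n$, its $\alpha$-mixing coefficients are bounded by those of $Z_n$, giving the desired exponential mixing. The technically delicate point is verifying the small-set minorization purely from the absolute continuity hypothesis on the innovation law, which amounts to checking that the absolutely continuous part of $\eta_n$ can be transported through the joint dynamics to yield a uniformly positive density on some compact set—here the upper triangular structure of $\tilde M$, combined with the block structure of $\tilde G$, makes this essentially reduce to the corresponding claim for the $\X$-block, already treated in the references above for multivariate CARMA models.
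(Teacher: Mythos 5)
Your proposal is correct and follows essentially the same route as the paper: the same MA$(\infty)$ candidate limit $\tilde\be_{\X}^{(h)}$, the same stacked AR(1) system $\bigl((\be_{\X,n}^{(h)})^T,\X(n)^T\bigr)^T$ with i.i.d.\ innovations $\int_{n-1}^n\widetilde G(n-u)\,\dd\Lb(u)$, and the same appeal to Markov-chain geometric ergodicity (the paper cites the Mokkadem/Schlemm--Stelzer arguments, which are exactly the Meyn--Tweedie drift-plus-minorization scheme you outline) for the exponential mixing under the absolute-continuity hypothesis. The only cosmetic difference is that you obtain the $O(h^{2k})$ moment bound by Minkowski's inequality in $L^k$ where the paper uses a generalized H\"older inequality; both are valid.
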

\begin{proof}
We first observe that
\begin{equation*}
\be_{\X,n}^{(h)} = \ee^{(n-1)\B}\be_{\X,1}^{(h)} + \sum_{\nu=0}^{n-2}\ee^{\nu\B}\beps_{F\circ\Y,n-\nu}^{(h)},\quad n\geq 1,
\end{equation*}
and define the sequence $\tilde{\be}_{\X}^{(h)}$ by
\begin{equation*}
\tilde{\be}_{\X,n}^{(h)} =\sum_{\nu=0}^{\infty}\ee^{\nu\B}\beps_{F\circ\Y,n-\nu}^{(h)},\quad n\in\Z.
\end{equation*}
By this definition, $\tilde{\be}_{\X}^{(h)}$ is obviously independent of $\hat\X^{(h)}_{q,0}$. Since $\beps_{F\circ\Y}^{(h)}$ is strongly mixing by \cref{prop-approxintegral}, it is in particular ergodic (\citep[Exercise 20.5.1]{klenke2008probability}). The sequence $\tilde{\be}_{\X}^{(h)}$ is the unique stationary solution of the AR(1) equations
\begin{equation*}
\tilde{\be}_{\X,n}^{(h)}=\ee^{\B}\tilde{\be}_{\X,n-1}^{(h)}+\beps_{F\circ\Y,n}^{(h)},\quad n\in\Z,
\end{equation*}
and  an application of \citep[Theorem 4.3]{krengel1985ergodic} to the infinite-order moving average representation of $\tilde{\be}_{\X}^{(h)}$ shows that this last sequence is ergodic as well. It remains to prove that $\be_{\X,n}^{(h)}$ converges to $\tilde{\be}_{\X,n}^{(h)}$ almost surely as $n\to\infty$. This follows from
\begin{align*}
\left\|\be_{\X,n}^{(h)}-\tilde{\be}_{\X,n}^{(h)}\right\|\leq \left\|\ee^{(n-1)\B}\right\|\left\|\be_{\X,1}^{(h)}\right\|+\left\|\sum_{\nu=n-1}^\infty{\ee^{\nu\B}\beps_{F\circ\Y,n-\nu}^{(h)}}\right\|,
\end{align*}
the fact that by \cref{lemma-OUXq} the eigenvalues of the matrix $\B$ have strictly negative real parts, and the almost sure convergence of the last sum (\citep[Proposition 3.1.1]{brockwell1991tst}). For the proof that the $k$th moments of $\be_{\X,n}^{(h)}$ are of order $O(h^{2k})$ we use the following generalization of H\"older's inequality, which can be proved by induction: for any $k$ random variables $Z_1,\ldots,Z_k$ and positive numbers $p_1,\ldots, p_k$ such that $\sum{1/p_i}=1$ it holds that 
\begin{equation}
\label{eq-genHoelder}
\E \left(Z_1\cdot\ldots\cdot Z_k\right)\leq\prod_{i=1}^k \left(\E Z_i^{p_i}\right)^{1/p_i}.
\end{equation}
Choosing $p_i=1/k$, $i=1,\ldots,k$, and using that, by \cref{prop-approxintegral} there exists a constant $C$, independent of $n$, such that $\E\left\|\beps_{F\circ\Y,n}^{(h)}\right\|^k\leq C h^{2k}$ it follows that
\begin{align*}
\E\left\|\tilde\be_{\X,n}^{(h)}\right\|^k\leq&\sum_{\nu_1=0}^{\infty}\cdot\ldots\cdot\sum_{\nu_k=0}^{\infty}\left\|\ee^{\nu_1\B}\right\|\cdot\ldots\cdot\left\|\ee^{\nu_k\B}\right\|\E\left(\left\|\beps_{F\circ\Y,n-\nu_1}^{(h)}\right\|\cdot\ldots\cdot\left\|\beps_{F\circ\Y,n-\nu_k}^{(h)}\right\|\right)\\
  \leq &  C h^{2k}\left(\sum_{\nu=0}^\infty\left\|\ee^{\nu\B}\right\|\right)^k,
\end{align*}
which is of order $O(h^{2k})$ because the sum is finite due to the eigenvalues of $\B$ having strictly negative real parts. In order to show that the sequence $\be_{\X}^{(h)}$ is strongly mixing, we note that the stacked process $\left(\begin{array}{cc}{\be_{\X}^{(h)}}^T & \X^T\end{array}\right)^T$ satisfies the AR(1) equation
\begin{equation*}
\left(\begin{array}{c}
       \be_{\X,n}^{(h)}\\\X(n)
      \end{array}\right) = \left(\begin{array}{cc}
                                 \ee^{\B} & \Gamma \\ 0 & \ee^A
				 \end{array}\right)\left(\begin{array}{c}
       \be_{\X,n-1}^{(h)}\\\X(n-1)\end{array}\right)+\ZZ_n,\quad \ZZ_n=\int_{n-1}^n\left(\begin{array}{c}
       G(n-s)\\\ee^{A(n-s)}E_p \end{array}\right) \dd\Lb(s)     
\end{equation*}
where $(\ZZ_n)_{n\in\Z}$ is an i.i.d$.$ noise sequence. An extension of the arguments leading to \citep[Theorem 1]{mokkadem1988mixing}, which is detailed in the proof of \citep[Theorem 4.3]{schlemmmixing2010}, shows that ARMA, and in particular, AR(1) processes are strongly mixing with exponentially decaying mixing coefficients if the driving noise sequence has a non-trivial absolutely continuous component, which is precisely what is assumed in the proposition.
\end{proof}
\begin{remark}
Sufficient conditions for the assumption made in the previous proposition to hold can be obtained from the observation that the random variable $\int_0^1\widetilde G(1-s)\dd\Lb(s)$ is infinitely divisible and that its characteristic triplet can be obtained as in \cref{eq-transformtriplet}. Sufficient conditions for an infinitely divisible random variable to be absolutely continuous, in terms of its characteristic triplet, can be found in \citep{tucker1965necessary} and \citep[Section 27]{sato1991lpa}. Since mixing is not our primary concern in this paper, and our results hold without it, we do not pursue this issue further here.
\end{remark}

\subsection[{Approximation of the increments of the L\'evy process}]{Approximation of the increments $\Delta L_n$}
If we combine what we have so far it follows that we can obtain estimates $\widehat{\Delta\Lb}_n$ of the increments of the L\'evy process $\Lb$ by discretizing \cref{eq-recoveryDeltaL}, that is
\begin{align}
\label{eq-DefDeltaLhn}
\widehat{\Delta\Lb}^{(h)}_n =&\sum_{\nu=0}^{p-q-1}{\left[E_q^T\B^{p-q-1-\nu}E_q + \sum_{k=\nu}^{p-q-2}{ A_{p-q-k-1}E_q^T\B^{k-\nu}E_q}\right]\left[\Delta_h^\nu[\Y](n)-\Delta_h^\nu[\Y](n-1)\right]}\notag\\
	      &+\left[\underline{A}_q\B^{-1}+\sum_{k=1}^{p-q}{ A_{p-q-k+1}E_q^T\B^{k-1}}+E_q^T\B^{p-q}\right]\left[\hat\X^{(h)}_q(n)-\hat\X^{(h)}_q(n-1)\right]\notag\\
  &+A_p\left[B_q^{\sim 1}B_0\right]^{-1}B_q^{\sim 1}T_{[n-1,n]}^{h^{-1}}\Y,
\end{align}
where the forward differences $\Delta_h^\nu[\Y](n)$ are defined in \cref{eq-forwarddifference}, the estimates $\hat\X^{(h)}_q$ are computed recursively by \cref{eq-DefhatXhq} and the formula for the trapezoidal approximation $T_{[n-1,n]}^{h^{-1}}\Y$ is given in \cref{eq-DefTrapez}.
 Writing
\begin{equation}
\label{eq-hatDeltaL}
\widehat{\Delta \Lb}_n^{(h)} = \Delta \Lb_n + \beps_n^{(h)},
\end{equation}
the approximation error $\beps_n^{(h)}$ is given by
\begin{align*}
\beps_n^{(h)} =& \sum_{\nu=0}^{p-q-1}{\left[E_q^T\B^{p-q-1-\nu}E_q + \sum_{k=\nu}^{p-q-2}{ A_{p-q-k-1}E_q^T\B^{k-\nu}}E_q\right]\left[\be_{\Y,n}^{\nu,(h)}-\be_{\Y,n-1}^{\nu,(h)}\right]}\\
	      &+\left[\underline{A}_q\B^{-1}+\sum_{k=1}^{p-q}{ A_{p-q-k+1}E_q^T\B^{k-1}}+E_q^T\B^{p-q}\right]\left[\be_{\X,n}^{(h)}-\be_{\X,n-1}^{(h)}\right]+A_pB_0^{-1}B_{q-1}\beps_{\Y,n}^{(h)}.
\end{align*}
The following theorem summarizes the results of the previous two subsections about the probabilistic properties of the sequence of approximation errors $\beps^{(h)}$.
\begin{theorem}
\label{theorem-Levyapprox}
Assume that $\Lb$ is a L\'evy process and $\Y$ is an $\Lb$-driven multivariate CARMA process given by the state space representation \labelcref{eq-statespacerepmod} and satisfying \cref{assum-eigA,assum-eigB}. Denote by $\Delta\Lb_n=\Lb(n)-\Lb(n-1)$ the unit increments of $\Lb$ and by $\widehat{\Delta\Lb}_n^{(h)}$ the estimates of the unit increments of $\Lb$ obtained from \cref{eq-hatDeltaL}. The stochastic process $\beps^{(h)}=\widehat{\Delta\Lb}^{(h)}-\Delta\Lb$ has the following properties:
\begin{enumerate}[i)]
 \item\label{theorem-Levyapprox-stationary} There exists a stationary, ergodic stochastic process $\tilde\beps^{(h)}$ such that $\left\|\beps_n^{(h)}-\tilde\beps_n^{(h)}\right\|\to 0$ almost surely as $n\to\infty$. If the random variable defined in \cref{eq-DefinttildeG} has a non-trivial absolutely continuous component with respect to the Lebesgue measure, then $\beps^{(h)}$ is exponentially strongly mixing.
 \item\label{theorem-Levyapprox-moments} If $\E\left\|\Lb(1)\right\|^{(k)_0}<\infty$, for some positive integer $k$, then there exists a constant $C>0$ such that
\begin{equation}
\sup_{n\in \N}\E\left\|\beps_n^{(h)}\right\|^\kappa\leq C h^{1/2},\quad \kappa=1,\ldots,k.
\end{equation}
\end{enumerate}
\end{theorem}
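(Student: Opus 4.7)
The strategy is to read off both claims from the explicit decomposition of $\beps_n^{(h)}$ displayed immediately before the theorem, in which $\beps_n^{(h)}$ appears as a fixed matrix-linear combination of the three types of discretisation errors already analysed in \cref{prop-derivMCARMA,prop-approxintegral,prop-eXn}. I would define the candidate stationary approximation $\tilde\beps^{(h)}$ by leaving the forward-difference terms $\be_{\Y,n}^{\nu,(h)}$ and the trapezoidal error $\beps_{\Y,n}^{(h)}$---which are already strictly stationary by \cref{prop-derivMCARMA,prop-approxintegral} (the latter applied with $F$ the $d$-dimensional identity matrix)---unchanged, and replacing only the non-stationary ingredient $\be_{\X,n}^{(h)}$ by its stationary limit $\tilde\be_{\X,n}^{(h)}$ constructed in \cref{prop-eXn}. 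The asserted almost sure convergence $\|\beps_n^{(h)}-\tilde\beps_n^{(h)}\|\to 0$ then reduces, by boundedness of the deterministic coefficient matrices, to the corresponding almost sure convergence $\be_{\X,n}^{(h)}-\tilde\be_{\X,n}^{(h)}\to 0$ provided by \cref{prop-eXn}.

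For the ergodicity and exponential mixing in \labelcref{theorem-Levyapprox-stationary}, I would exhibit each component sequence as a measurable functional of a finite window of the driving L\'evy process via the moving-average representations \labelcref{eq-Ymovingaverage,eq-bepsFcircY} and the infinite moving-average form of $\tilde\be_{\X}^{(h)}$ used in the proof of \cref{prop-eXn}. Ergodicity of $\tilde\beps^{(h)}$ then follows from the ergodicity of the i.i.d.\ sequence of unit increments $(\Lb(n)-\Lb(n-1))_{n\in\Z}$ together with standard measurability arguments. For exponential mixing under the absolutely-continuous-component hypothesis, I would observe that the stacked AR(1) system obtained by augmenting $\tilde\be_{\X,n}^{(h)}$ with $\X(n)$, as at the end of the proof of \cref{prop-eXn}, is exponentially strongly mixing precisely under this hypothesis, and that $\tilde\beps_n^{(h)}$ is a bounded-memory measurable functional of this system together with a finite window of $\Y$; mixing is then inherited through the standard functional-mixing result recorded in \cref{lemma-mixingfunctional}.

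For the moment bound \labelcref{theorem-Levyapprox-moments}, Minkowski's inequality applied to the decomposition reduces the problem to estimating the $L^\kappa$-norms of the three building blocks. The hypothesis $\E\|\Lb(1)\|^{(k)_0}<\infty$ implies $\E\|\Lb(1)\|^\kappa<\infty$ for every $\kappa\in\{1,\ldots,k\}$ since $(\kappa)_0\leq(k)_0$, so \cref{prop-derivMCARMA,prop-approxintegral,prop-eXn} supply the uniform bounds $\E\|\be_{\Y,n}^{\nu,(h)}\|^\kappa=O(h^\kappa)$ for $\nu\leq p-q-2$, $\E\|\be_{\Y,n}^{p-q-1,(h)}\|^\kappa=O(h^{\kappa/(\kappa)_0})$, and $\E\|\be_{\X,n}^{(h)}\|^\kappa,\ \E\|\beps_{\Y,n}^{(h)}\|^\kappa=O(h^{2\kappa})$. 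Because $\kappa/(\kappa)_0\in[1/2,1]$ with the lower endpoint attained only at $\kappa=1$, and because $h<1$, all three contributions are dominated by a constant multiple of $h^{1/2}$, uniformly in $n$, which yields the claim.

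The main obstacle I anticipate lies in the mixing assertion: the three component processes are all driven by the same underlying L\'evy noise, so their separately-established mixing properties do not automatically combine into a joint mixing statement for $\tilde\beps^{(h)}$. The cleanest remedy is to lift the problem to the single Markovian process $(\tilde\be_{\X}^{(h)},\X)$ from \cref{prop-eXn}, whose exponential mixing under the absolutely-continuous-component hypothesis has already been established, and then to invoke \cref{lemma-mixingfunctional}; once this lift is in place, verifying that $\tilde\beps_n^{(h)}$ depends only on a uniformly bounded window of the lifted process is routine.
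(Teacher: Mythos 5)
Your proposal is correct and follows exactly the route the paper takes: the paper's own proof is the single sentence that both claims follow from \cref{prop-derivMCARMA,prop-approxintegral,prop-eXn}, applied to the explicit decomposition of $\beps_n^{(h)}$ displayed before the theorem. Your elaboration --- replacing only $\be_{\X,n}^{(h)}$ by its stationary version, checking that $\kappa/(\kappa)_0\geq 1/2$ makes the $\be_{\Y,n}^{p-q-1,(h)}$ term dominant, and lifting the joint mixing question to the stacked process from \cref{prop-eXn} --- fills in details the paper leaves implicit, without deviating from its argument.
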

\begin{proof}
Both claims follow directly from \cref{prop-derivMCARMA,prop-approxintegral,prop-eXn}.
\end{proof}
For the purpose of estimating a parametric model of the L\'evy process $\Lb$ based on the noisy observations $\widehat{\Delta\Lb}^{(h)}$ it is important not only to have a sound quantitative understanding of the extent to which the true increments $\Delta\Lb$ differ from the estimated increments $\widehat{\Delta\Lb}$, but also to know how strongly this difference is affected when a function is applied to the increments. This issue is investigated in the next lemma.

\begin{lemma}
\label{lemma-convpTaylorexp}
Let $f:\R^m\to\R^q$ be a function with bounded $k$th derivative and let $l$ be some fixed positive integer. 
Assume that $\E\left\|\Lb(1)\right\|^{(kl)_0}<\infty$, and further that, for any integer $1\leq r\leq k-1$ and any integers $1\leq i_1,\ldots,i_r\leq m$, the moments of the partial derivatives of $f$ satisfy
\begin{equation}
\label{eq-assumpartialfk}
\E\left\|\ppartial_{i_1}\cdots\ppartial_{i_r}f\left(\Lb(1)\right)\right\|^{kl}<\infty.
\end{equation}
It then holds that
\begin{equation}
\label{eq-taylor1}
\sup_{n\in\N}\E\left\|f\left(\widehat{\Delta\Lb}^{(h)}_n\right)-f\left(\Delta\Lb_n\right)\right\|^l= O(h^{1/2}).
\end{equation}

\end{lemma}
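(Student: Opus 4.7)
The plan is to Taylor expand $f$ around $\Delta\Lb_n$ and reduce the claim to moment estimates on $\beps_n^{(h)}\coloneqq\widehat{\Delta\Lb}_n^{(h)}-\Delta\Lb_n$ that are supplied by \cref{theorem-Levyapprox}. Since $D^k f$ is bounded, Taylor's theorem with integral remainder yields
\begin{equation*}
f(\widehat{\Delta\Lb}_n^{(h)})-f(\Delta\Lb_n)=\sum_{r=1}^{k-1}\frac{1}{r!}\sum_{i_1,\ldots,i_r=1}^m\ppartial_{i_1}\cdots\ppartial_{i_r}f(\Delta\Lb_n)\prod_{j=1}^r(\beps_n^{(h)})^{i_j}+R_k,
\end{equation*}
with $\|R_k\|\leq C_k\|\beps_n^{(h)}\|^k$ holding pathwise. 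Raising to the $l$-th power, using the convexity bound $(\sum_{i=1}^{k}a_i)^l\leq k^{l-1}\sum_{i=1}^k a_i^l$, and then taking expectations, it suffices to bound
\begin{equation*}
\E\bigl[\|D^rf(\Delta\Lb_n)\|^l\,\|\beps_n^{(h)}\|^{rl}\bigr]\ \ (1\leq r\leq k-1)\quad\text{and}\quad \E\|\beps_n^{(h)}\|^{kl}
\end{equation*}
by $O(h^{1/2})$ uniformly in $n$.

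The remainder contribution is immediate: applying \cref{theorem-Levyapprox},\labelcref{theorem-Levyapprox-moments} with $k$ replaced by $kl$, whose hypothesis $\E\|\Lb(1)\|^{(kl)_0}<\infty$ is exactly the one assumed in the lemma, gives $\sup_n\E\|\beps_n^{(h)}\|^{kl}\leq Ch^{1/2}$. Each mixed term I treat by H\"older's inequality with conjugate exponents $p,p'$ chosen so that $p\leq k$ (so that assumption \cref{eq-assumpartialfk}, the stationarity $\Delta\Lb_n\eqd\Lb(1)$, and Jensen's inequality uniformly control $\E\|D^rf(\Delta\Lb_n)\|^{lp}$) and $rlp'\leq kl$ (so that \cref{theorem-Levyapprox} controls $\E\|\beps_n^{(h)}\|^{rlp'}$). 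The symmetric choice $p=p'=2$ is admissible whenever $r\leq k/2$, and the complementary choice $p=k/(k-r)$, $p'=k/r$ covers the range $k/2<r\leq k-1$; both respect the two constraints.

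The main obstacle is sharpness: the rate $Ch^{1/2}$ stated in \cref{theorem-Levyapprox} is only the worst-case bound, and pushing the first-order term ($r=1$, $p=p'=2$) through H\"older requires the sharper estimate $\E\|\beps_n^{(h)}\|^{2l}=O(h)$, not merely $O(h^{1/2})$. This refinement is implicit in the proof of \cref{theorem-Levyapprox}: the contributions to $\beps_n^{(h)}$ coming from $\be_{\X,n}^{(h)}$ and $\beps_{F\circ\Y,n}^{(h)}$ are already of order $O(h^{2\kappa})$ by \cref{prop-approxintegral,prop-eXn}, while the dominant piece $\be_{\Y,n}^{p-q-1,(h)}$ contributes $O(h^{\kappa/(\kappa)_0})$ by \cref{prop-derivMCARMA},\labelcref{prop-derivMCARMA-moments}, which simplifies to $O(h)$ whenever $\kappa$ is even. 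Substituting this sharpened rate into each H\"older split above (and using the analogous observation for $r>k/2$, where $(\E\|\beps_n^{(h)}\|^{kl})^{r/k}$ is enhanced to $O(h^{r/k})$ for $kl$ even) yields contributions at most $O(h^{1/2})$ uniformly in $n$ for every $r$, establishing \cref{eq-taylor1}.
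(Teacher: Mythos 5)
Your proof is correct and takes essentially the same route as the paper's: Taylor expansion around $\Delta\Lb_n$, a H\"older splitting of the mixed terms, and the refined moment bounds $\E\left\|\beps_n^{(h)}\right\|^{\kappa}=O(h^{\kappa/(\kappa)_0})$; the only difference is bookkeeping, since the paper expands $\bigl(\sum_{r=1}^{k-1}\cdots\bigr)^l$ multinomially and applies one generalized H\"older inequality with exponents $kl,\ldots,kl,k/(k-1)$ to the resulting $(r_1,\ldots,r_l)$-indexed terms, whereas you first use $(\sum a_i)^l\leq k^{l-1}\sum a_i^l$ and then a two-factor H\"older with $r$-dependent exponents. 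Your observation that the $O(h^{1/2})$ stated in \cref{theorem-Levyapprox},\labelcref{theorem-Levyapprox-moments} is too coarse for the first-order term and that the sharper rate must be pulled from \cref{prop-derivMCARMA,prop-approxintegral,prop-eXn} is exactly right --- the paper's own proof relies on this same refinement, citing it as a consequence of \cref{theorem-Levyapprox} without comment.
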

\begin{proof}
By Taylor's theorem (\citep[Theorem 12.14]{apostol1974mathematical}) we have that
\begin{equation*}
f\left(\widehat{\Delta\Lb}^{(h)}_n\right)-f\left(\Delta\Lb_n\right) = f\left(\Delta\Lb_n+\beps_n^{(h)}\right)-f\left(\Delta\Lb_n\right) = \sum_{r=1}^{k-1}{\frac{1}{r!}d^{(r)}f\left(\Delta\Lb_n\right)\left(\beps_n^{(h)}\right)^r}+R\left(\Delta\Lb_n;\beps_n^{(h)}\right),
\end{equation*}
where
\begin{equation*}
d^{(r)}f\left(\Delta\Lb_n\right)\left(\beps_n^{(h)}\right)^r = \sum_{i_1=1}^m\cdots\sum_{i_r=1}^m{\ppartial_{i_1}\cdots\ppartial_{i_r}f\left(\Delta\Lb_n\right)\varepsilon_{n}^{(h),i_1}\cdots\varepsilon_{n}^{(h),i_r}}
\end{equation*}
defines the action of the $r$th derivative of $f$. We note that
\begin{equation*}
\left\|d^{(r)}f\left(\Delta\Lb_n\right)\left(\beps_n^{(h)}\right)^r\right\| \leq \sum_{i_1=1}^m\cdots\sum_{i_r=1}^m\left\|\ppartial_{i_1}\cdots\ppartial_{i_r}f\left(\Delta\Lb_n\right)\right\|\left\|\beps_n^{(h)}\right\|^r\eqqcolon \left\|d^{(r)}f\left(\Delta\Lb_n\right)\right\|\left\|\beps_n^{(h)}\right\|^r,
\end{equation*}
and assumption \labelcref{eq-assumpartialfk} implies that
\begin{equation*}
\E\left\|d^{(r)}f\left(\Delta\Lb_n\right)\right\|^{kl} \leq m^{rlk}\sum_{i_1=1}^m\cdots\sum_{i_r=1}^m\E\left\|\ppartial_{i_1}\cdots\ppartial_{i_r}f\left(\Delta\Lb_n\right)\right\|^{kl}<\infty.
\end{equation*}
It follows from the boundedness of the $k$th derivative of $f$ that the remainder $R\left(\Delta\Lb_n;\beps_n^{(h)}\right)$ satisfies
\begin{equation*}
\left\|R\left(\Delta\Lb_n;\beps_n^{(h)}\right)\right\|\leq C\left\|\beps_n^{(h)}\right\|^k,
\end{equation*}
for some constant $C$. In particular,
\begin{align*}
&\E\left\|f\left(\widehat{\Delta\Lb}^{(h)}_n\right)-f\left(\Delta\Lb_n\right)\right\|^l\\
\leq & 2^l\E\left(\sum_{r=1}^{k-1}{\frac{1}{r!}\left\|d^{(r)}f\left(\Delta\Lb_n\right)\left(\beps_n^{(h)}\right)^r\right\|}\right)^l+2^l\E\left\|R\left(\Delta\Lb_n;\beps_n^{(h)}\right)\right\|^l\\
  \leq& 2^l\sum_{r_1=1}^{k-1}\cdot\cdots\cdot\sum_{r_l=1}^{k-1}\frac{1}{r_1!\cdot\ldots\cdot r_l!}\E\left(\left\|d^{(r_1)}f\left(\Delta\Lb_n\right)\right\|\cdot\ldots\cdot\left\|d^{(r_l)}f\left(\Delta\Lb_n\right)\right\|\left\|\beps_n^{(h)}\right\|^{r_1+\ldots+r_l}\right)+C2^l\E\left\|\beps_n^{(h)}\right\|^{kl}.
\end{align*}
By \cref{theorem-Levyapprox}, the assumption that $\Lb(1)$ has a finite $(kl)_0$th absolute moment implies that $\E\left\|\beps_n^{(h)}\right\|^\kappa$ is of order $O(h^{\kappa/\kappa_0})$ as $h\to 0$ for all $1\leq \kappa\leq k$, where the constant implicit in the $O(\cdot)$ notation does not depend on $n$. It thus follows by an application of the generalized H\"older inequality \labelcref{eq-genHoelder} with exponents $p_1=\ldots=p_l=kl$, $p_{l+1}=k/(k-1)$ that
\begin{align*}
\E\left\|f\left(\widehat{\Delta\Lb}^{(h)}_n\right)-f\left(\Delta\Lb_n\right)\right\|^l \leq &\sum_{r_1,\ldots,r_l=1}^{k-1} {\left[ \prod_{i=1}^l\frac{2}{r_i!}\E\left(\left\|d^{(r_i)}f\left(\Delta\Lb_n\right)\right\|^{kl}\right)^{\frac{1}{kl}} \right]} \underbrace{\E\left(\left\|\beps_n^{(h)}\right\|^{\frac{(r_1+\ldots+r_l)k}{k-1}}\right)^{\frac{k-1}{k}}}_{=O\left(h^{\frac{r_1+\ldots+r_l}{[(r_1+\ldots+r_l)k/(k-1)]_0}}\right)} + O\left(h^{\frac{kl}{(kl)_0}}\right).
\end{align*}
Since for any $\alpha\in[0,2]$ and any positive integer $r$ it holds that $(r\alpha)_0\leq r\alpha_0$, the dominating term in this sum is the one corresponding to $r_1=\ldots=r_l=1$, which is of order $O(h^{1/2})$. Thus \cref{eq-taylor1} is shown.
\end{proof}

\section{Generalized method of moments estimation with noisy data}
\label{sec-GMM}
In this section we consider the problem of estimating a parametric model $\Pb_{\vartheta}$ if only a disturbed i.i.d$.$ sample of the true distribution is available. More precisely, assume that $\Theta$ is some parameter space, that $\left(\Pb_\vartheta:\vartheta\in\Theta\right)$ is a family of probability distributions on $\R^m$ and that
\begin{equation}
\label{eq-sampleXN}
X^N = (X_1,\ldots,X_N),\quad\R^m\ni X_n\sim \Pb_{\vartheta_0},
\end{equation}
is an i.i.d$.$ sample from $\Pb_{\vartheta_0}$. The classical generalized method of moments (abbreviated as GMM) is a well-established procedure for estimating the value of $\vartheta_0$ from the observations $X^N$, see for instance \citep{hall2005generalized,hansen1982,newey1994largesample} for a general introduction.  After introducing some relevant notation and taking a closer look at two particularly important special cases of this class of estimators we state the result about the consistency and asymptotic normality of GMM estimators for easy reference in \cref{theorem-GMMold}. Our goal in this section is to extend this result to the situation where the sample $X^N$ from the distribution $\Pb_{\vartheta_0}$ cannot be observed directly. Instead, we assume that for each $h>0$ there is a stochastic process $\varepsilon^{(h)}$ not necessarily independent of $X^N$, which we think of as a disturbance to the i.i.d$.$ sample $X^N$, and the value of $\vartheta_0$ is to be estimated from the observation $(X_1+\
varepsilon_1^{(h)},\ldots,X_N+\varepsilon_N^{(h)})$. In \cref{theorem-GMMnew} we prove under a mild moment assumption that the asymptotic properties of the GMM estimator, as $N$ becomes large and $h$ becomes small, are not altered by the inclusion of the noise process $\varepsilon^{(h)}$. Finally, we use this result in \cref{theorem-GMMLevy} to answer the question of how to estimate a parametric model for the driving L\'evy process of a multivariate CARMA process from discrete-time observations.

Underlying the construction of any GMM estimator is the existence of a function $g:\R^m\times\Theta\to\R^q$ such that for $X_1\sim\Pb_{\vartheta_0}$,
\begin{equation}
\label{eq-GMMidentifiability}
\E g\left(X_1,\vartheta\right) = 0 \Leftrightarrow \vartheta=\vartheta_0.
\end{equation}
The analogy principle, that is the philosophy that unknown population averages should be approximated by sample averages, then suggests that an estimator $\hat\vartheta^N$ of $\vartheta_0$ based on the sample $X^N$, given by \cref{eq-sampleXN},  can be defined as
\begin{equation}
\label{eq-DefhatvarthetaN}
\hat\vartheta^N = \argmin_{\vartheta\in\Theta}\left\|\frac{1}{N}\sum_{n=1}^N{g\left(X_n,\vartheta\right)}\right\|_{W_N},
\end{equation}
where $W_N$ is a positive definite, possibly data-dependent, $q\times q$ matrix defining the norm 
\begin{equation*}
\left\|\cdot\right\|_{W_N}:\R^q\to\R^+,\quad \left\|x\right\|_{W_N}=(\bx^TW_N\bx)^{1/2},\quad \bx\in\R^q.
\end{equation*}
As we will see shortly the choice of $W_N$ influences the asymptotic variance $\Sigma$ of the estimator $\hat\vartheta^N$, given in \cref{eq-SigmaGMM}. The optimal choice of weighting matrices $W_N$ is described in \cref{corollary-optimalW}.

The advantage in considering a GMM approach to the estimation problem is that it contains many classical estimation procedures as special cases. Here, we only mention two such special cases which are particularly useful in the context of estimating a parametric model for a L\'evy process. It is an immediate consequence of the \cref{Def-LevyProcess} that a L\'evy-process $\Lb$ is uniquely determined by the distribution of the unit increments $\Lb(1)-\Lb(0)$, which, in turn, is characterized by its characteristic function $\E\exp\{\ii\langle\bu,\Lb(1)-\Lb(0)\rangle\}=\exp\{\psi(\bu)\}$ in its L\'evy--Khintchine form (\cref{eq-characfunc}). It is therefore natural to specify a parametric model for $\Lb$ by parametrizing the characteristic exponents, which amounts to defining, for each $\vartheta\in\Theta$, a function $\bu\mapsto\psi_{\vartheta}(\bu)$ of the form \labelcref{eq-characfunc}. A promising estimator for $\vartheta_0$ in such a model is that value of $\vartheta$ that best matches the characteristic 
function $\bu\mapsto\exp{\{\psi_\vartheta(\bu)\}}$ with its empirical counterpart. This leads to choosing the function $g$ in \cref{eq-DefhatvarthetaN} as
\begin{equation*}
g:\R^m\times\Theta\to\R^q:\quad(X,\vartheta)\mapsto \left[\begin{array}{c}\re\left(\ee^{\ii\langle\bu_k,X\rangle}-\ee^{\psi_\vartheta(\bu_k)}\right) \\ \imag\left(\ee^{\ii\langle\bu_k,X\rangle}-\ee^{\psi_\vartheta(\bu_k)}\right) \end{array}\right]_{k=1,\ldots,q/2},
\end{equation*}
where $\bu_1,\ldots,\bu_{q/2}$ are suitable elements of $\R^m$ at which the characteristic functions are to be matched. The value of $q\in2\N$ as well as the particular $\bu_j$ are chosen such that condition \labelcref{eq-GMMidentifiability} holds, which means that the model is identifiable. Another special case of the generalized method of moments estimator of considerable practical importance arises if the parametric family of distributions $\Pb_{\vartheta}$ is given as a family of probability densities $p_{\vartheta}(\cdot)$. In this case, the choice
\begin{equation*}
g:\R^m\times\Theta\to\R^q:\quad(X,\vartheta)\mapsto\nabla_{\vartheta}\log p_{\vartheta}(X)
\end{equation*}
gives rise to the classical maximum-likelihood estimator with all its desirable asymptotic properties.

In order to be able to state the classical result about the asymptotic properties of the generalized method of moments estimator for a general moment function $g$ we introduce the notations 
\begin{equation*}
\Omega_0=\E g(X_1,\vartheta_0)g(X_1,\vartheta_0)^T,\quad\text{and}\quad G_0=-\E\nabla_\vartheta g(X_1,\vartheta_0)
\end{equation*}
for the covariance matrix of the moments and the generalized score matrix, respectively, where $\nabla$ denotes the differential operator.

\begin{theorem}[{{\citep[Theorem 2.6 and Theorem 3.4]{newey1994largesample} }}]
\label{theorem-GMMold}
Assume that $(\Pb_\vartheta)_{\vartheta\in\Theta}$ is a parametric family of probability distributions and let $X^N=(X_1,\ldots,X_N)$ be an i.i.d$.$ sample from the distribution $\Pb_{\vartheta_0}$ of length $N$. Denote by $\hat\vartheta^N$ the GMM estimator based on $X^N$ defined in \cref{eq-DefhatvarthetaN}. Assume:
\begin{enumerate}[i)]
 \item\label{theorem-GMMold1} The domain $\Theta$ of $\vartheta$ is a compact subset of $\R^r$ and $\vartheta_0$ is in the interior of $\Theta$.
 \item\label{theorem-GMMold2} For each $\vartheta\in\Theta$, the function $\bx\mapsto g(\bx,\vartheta)$ is measurable; for almost every $\bx\in\R^m$ the function $\vartheta\mapsto g(\bx,\vartheta)$ is continuous on $\Theta$ and continuously differentiable in a neighbourhood $U$ of $\vartheta_0$. Moreover there exists a function $\alpha:\R^m\to \R$ satisfying $\E\alpha(X_1)<\infty$ such that for every $\vartheta_1,\vartheta_2\in U$ it holds that $\left\|\nabla_\vartheta g(\bx,\vartheta_1)-\nabla_\vartheta g(\bx,\vartheta_2)\right\|\leq \alpha(\bx)\left\|\vartheta_1-\vartheta_2\right\|$.
 \item\label{theorem-GMMold3} $\E g\left(X_1,\vartheta\right)=0$ if and only if $\vartheta=\vartheta_0$.
 \item\label{theorem-GMMold4} $\E\left\|g\left(X_1,\vartheta\right)\right\|^2<\infty$ for all $\vartheta\in\Theta$, $\Omega_0$ is a positive definite $q\times q$ matrix and $G_0$ is a $q\times r$ matrix of rank $r$.
 \item\label{theorem-GMMold5} $W_N$ are $q\times q$ matrices converging in probability to a positive definite matrix $W$.
 \item\label{theorem-GMMold6} There exists a function $\alpha:\R^m\to\R$ satisfying $\E\alpha(X_1)<\infty$ such that $\left\|g(\bx,\vartheta)g(\bx,\vartheta)^T\right\|\leq\alpha(\bx)$ and $\left\|\nabla_\vartheta g(\bx,\vartheta)\right\|\leq\alpha(\bx)$.
\end{enumerate}
It then holds that $\hat\vartheta^N$ is consistent and asymptotically normally distributed, that is
\begin{equation}
N^{1/2}(\hat\vartheta^N-\vartheta_0)\convd \mathscr{N}(\bzero_r,\Sigma),\quad N\to\infty,
\end{equation}
where the asymptotic covariance matrix $\Sigma$ is given by
\begin{equation}
\label{eq-SigmaGMM}
\Sigma = \left[G_0^T W G_0\right]^{-1}G_0^TW\Omega_0 W G_0 \left[G_0^T W G_0\right]^{-1}.
\end{equation}
\end{theorem}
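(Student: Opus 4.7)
The plan is to follow the classical two-step strategy, separating the argument into a consistency step and an asymptotic-normality step built on top of it. For consistency, I would first establish uniform convergence of the sample moment to its expectation. Writing $\bar g(\vartheta)=\E g(X_1,\vartheta)$ and $\hat g_N(\vartheta)=N^{-1}\sum_{n=1}^N g(X_n,\vartheta)$, the i.i.d.\ assumption on $X^N$ combined with the compactness of $\Theta$ in \labelcref{theorem-GMMold1}, continuity of $g(\bx,\cdot)$ in \labelcref{theorem-GMMold2}, and the integrable dominating envelope in \labelcref{theorem-GMMold6} let me apply a standard uniform law of large numbers (e.g.\ \citep[Lemma 2.4]{newey1994largesample}) to obtain $\sup_{\vartheta\in\Theta}\|\hat g_N(\vartheta)-\bar g(\vartheta)\|\convp 0$. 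Together with $W_N\convp W$ from \labelcref{theorem-GMMold5}, this upgrades to uniform convergence of the objective $Q_N(\vartheta)=\|\hat g_N(\vartheta)\|_{W_N}^2$ to $Q_0(\vartheta)=\bar g(\vartheta)^T W\bar g(\vartheta)$. The identification hypothesis \labelcref{theorem-GMMold3} and positive definiteness of $W$ imply that $Q_0$ is uniquely minimised at $\vartheta_0$, so a standard argmin continuity result (\citep[Theorem 5.7]{vanderwaart1998}) yields $\hat\vartheta^N\convp\vartheta_0$.

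For the asymptotic normality step, consistency together with $\vartheta_0$ being interior to $\Theta$ means that the first-order condition $\hat G_N(\hat\vartheta^N)^T W_N\hat g_N(\hat\vartheta^N)=0$ holds with probability tending to one, where $\hat G_N(\vartheta)=\nabla_\vartheta\hat g_N(\vartheta)$. Using the differentiability and Lipschitz property of $\nabla_\vartheta g$ from \labelcref{theorem-GMMold2}, I would perform a componentwise mean-value expansion
\[
\hat g_N(\hat\vartheta^N)=\hat g_N(\vartheta_0)+\hat G_N^{\star}(\tilde\vartheta^N)(\hat\vartheta^N-\vartheta_0),
\]
where $\hat G_N^{\star}(\tilde\vartheta^N)$ is the $q\times r$ matrix whose $j$th row is $\nabla_\vartheta \hat g_N^{\,j}$ evaluated at some intermediate point $\tilde\vartheta^N_j$ lying between $\vartheta_0$ and $\hat\vartheta^N$. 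Substituting into the first-order condition and solving for $\sqrt{N}(\hat\vartheta^N-\vartheta_0)$ gives
\[
\sqrt{N}(\hat\vartheta^N-\vartheta_0)=-\bigl[\hat G_N(\hat\vartheta^N)^T W_N\hat G_N^{\star}(\tilde\vartheta^N)\bigr]^{-1}\hat G_N(\hat\vartheta^N)^T W_N\sqrt{N}\,\hat g_N(\vartheta_0).
\]

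The final step is to feed three limit results into this identity. By \labelcref{theorem-GMMold4}, $g(X_1,\vartheta_0)$ has mean zero and finite second moment with covariance $\Omega_0$, so the multivariate CLT gives $\sqrt{N}\hat g_N(\vartheta_0)\convd \mathscr{N}(\bzero_q,\Omega_0)$. A second application of the uniform law of large numbers to $\nabla_\vartheta g$ on a compact neighbourhood of $\vartheta_0$, justified again by \labelcref{theorem-GMMold6}, together with the consistency of $\hat\vartheta^N$ and all $\tilde\vartheta^N_j$, yields $\hat G_N(\hat\vartheta^N)\convp -G_0$ and $\hat G_N^{\star}(\tilde\vartheta^N)\convp -G_0$. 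The rank hypothesis on $G_0$ in \labelcref{theorem-GMMold4} and positive definiteness of $W$ ensure that the bracketed matrix is invertible for large $N$, so Slutsky's theorem produces the announced Gaussian limit with covariance \labelcref{eq-SigmaGMM}. I expect the main technical subtlety to be in the mean-value expansion: because the multivariate mean-value theorem fails for vector-valued maps, one has to apply it row by row and thereby work with $q$ distinct intermediate points $\tilde\vartheta^N_j$, and the Lipschitz bound on $\nabla_\vartheta g$ from \labelcref{theorem-GMMold2} is exactly what is needed to verify that $\hat G_N$ evaluated at these different points all share the common probability limit $-G_0$.
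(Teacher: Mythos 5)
Your proposal is correct and is essentially the standard Newey--McFadden argument that the paper itself cites for this statement (the paper gives no proof of \cref{theorem-GMMold}, but its proof of the generalization \cref{theorem-GMMnew} follows the same route: uniform law of large numbers for consistency, then a row-wise mean-value expansion of the first-order condition combined with the CLT for $N^{1/2}\hat g_N(\vartheta_0)$ and Slutsky's theorem). One trivial slip: the fact that $\E g(X_1,\vartheta_0)=\bzero_q$, which the CLT step needs, comes from the identification condition \labelcref{theorem-GMMold3} rather than from \labelcref{theorem-GMMold4}, which only supplies the finite second moment and the covariance $\Omega_0$.
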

A result analogous to \cref{theorem-GMMold} holds in the more general situation, where we do not have access to the sample $X^N$ but only to a noisy variant. We first introduce the necessary notation, which we will need in the proof. The generalized method of moments estimator $\hat{\hat\vartheta}^{N,h}$ of $\vartheta_0$ based on the disturbed sample $X^{N,h}=(X_1+\varepsilon^{(h)}_1,\ldots,X_N+\varepsilon^{(h)}_N)$ is defined as
\begin{equation}
\label{eq-DefhathatvarthetaNh}
\hat{\hat\vartheta}^{N,h} = \argmin_{\vartheta\in\Theta}Q_{N,h}(\vartheta),
\end{equation}
where the (random) criterion function $Q_{N,h}:\Theta\to\R^+$ has the form
\begin{equation}
\label{eq-DefQNh}
Q_{N,h}(\vartheta) = \left\|m_{N,h}(\vartheta)\right\|^2_{W_{N,h}}
\end{equation}
and $m_{N,h}:\Theta\to\R^q$ is given as
\begin{equation*}
m_{N,h}(\vartheta)=\frac{1}{N}\sum_{n=1}^N{g\left(X_n+\varepsilon_n^{(h)},\vartheta\right)},\quad g:\R^m\times\Theta\to\R^q.
\end{equation*}
Again, $W_{N,h}$ is a positive definite $q\times q$ matrix, which might depend on the sample $X^{N,h}$. As before we write $\Omega(\vartheta)=\E g\left(X_1,\vartheta\right)g\left(X_1,\vartheta\right)^T$ and
\begin{equation*}
\Omega_{N,h}(\vartheta) = \frac{1}{N}\sum_{n=1}^N{g\left(X_n+\varepsilon_n^{(h)},\vartheta\right)g\left(X_n+\varepsilon_n^{(h)},\vartheta\right)^T}
\end{equation*}
for the covariance matrix of the moments $g$ and its empirical counterpart. The sample analogue of the score matrix $G(\vartheta)=-\E\nabla_\vartheta g\left(X_1,\vartheta\right)$ is defined as
\begin{equation*}
G_{N,h}(\vartheta) = -\frac{1}{N}\sum_{n=1}^N{\nabla_\vartheta g\left(X_n+\varepsilon_n^{(h)},\vartheta\right)}.
\end{equation*}
\begin{theorem}
\label{theorem-GMMnew}
Assume that $(\Pb_\vartheta)_{\vartheta\in\Theta}$ is a parametric family of probability distributions, that $X^N$ is an i.i.d$.$ sample from the distribution $\Pb_{\vartheta_0}$ of length $N$ and that, for each $h>0$, there is a stochastic process $\varepsilon^{(h)}=\left(\varepsilon^{(h)}_n\right)_{n\in\N}$. Denote by $\hat{\hat\vartheta}^{N,h}$ the GMM estimator based on $X^{N,h}$ defined in \cref{eq-DefhathatvarthetaNh}. In addition to the assumptions of \cref{theorem-GMMold} assume:
\begin{enumerate}[i)]
\setcounter{enumi}{6}
 \item\label{theorem-GMMnew7} There exists a function $\beta:\R^+\to\R^+$ satisfying $\beta(h)\to 0$ as $h\to 0$, such that
\begin{equation}
\sup_n\E\left\|g\left(X_n+\varepsilon_n^{(h)},\vartheta_0\right)-g(X_n,\vartheta_0)\right\|=O\left(\beta(h)\right),\quad \text{as $h\to 0$}.
\end{equation}
 \item\label{theorem-GMMnew8} For all $\vartheta\in\Theta$ it holds that $\sup_n\E\left\|g\left(X_n+\varepsilon_n^{(h)},\vartheta\right)-g\left(X_n,\vartheta\right)\right\|^2\to 0$, as $h\to 0$.
\item\label{theorem-GMMnew9} For all $\vartheta\in\Theta$, the derivative of $g$ satisfies $\sup_n\E\left\|\nabla_\vartheta g\left(X_n+\varepsilon_n^{(h)},\vartheta\right)-\nabla_\vartheta g\left(X_1,\vartheta\right)\right\|\to 0$, as $h\to 0$.
\end{enumerate}
If $h=h_N$ is chosen dependent on $N$ such that $N^{1/2}\beta(h_N)\to 0$ as $N\to \infty$, then it holds that $\hat{\hat\vartheta}^{N,h_N}$ is consistent and asymptotically normal as $N\to\infty$ with the same asymptotic covariance as $\hat\vartheta^N$, given in \cref{eq-SigmaGMM}.
\end{theorem}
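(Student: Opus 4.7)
The plan is to follow the standard two-step strategy for extremum estimators: first establish consistency of $\hat{\hat\vartheta}^{N,h_N}$ via a uniform convergence argument for the criterion function $Q_{N,h_N}$, and then derive the limiting distribution from a Taylor expansion of the first-order conditions at $\vartheta_0$. The role of assumptions \labelcref{theorem-GMMnew7}--\labelcref{theorem-GMMnew9} and the calibration $N^{1/2}\beta(h_N)\to0$ is precisely to guarantee that substituting $X_n+\varepsilon_n^{(h_N)}$ for $X_n$ throughout does not affect the asymptotics. Throughout I write $\bar g_N(\vartheta)=N^{-1}\sum_{n=1}^N g(X_n,\vartheta)$ and $\delta_{N,h}(\vartheta)=m_{N,h}(\vartheta)-\bar g_N(\vartheta)$ for the clean sample average and the perturbation term, respectively, so that $m_{N,h}=\bar g_N+\delta_{N,h}$.

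For consistency I would verify the hypotheses of a standard extremum estimator consistency theorem (e.g.\ Theorem 2.1 of \citep{newey1994largesample}). The population criterion $Q_0(\vartheta)=g_0(\vartheta)^TWg_0(\vartheta)$ with $g_0(\vartheta)=\E g(X_1,\vartheta)$ is continuous and, by identifiability \labelcref{theorem-GMMold3} together with positive definiteness of $W$, uniquely minimised at $\vartheta_0$. Uniform convergence $\sup_\vartheta\|\bar g_N(\vartheta)-g_0(\vartheta)\|\convp 0$ is the classical uniform law of large numbers using \labelcref{theorem-GMMold1}--\labelcref{theorem-GMMold2} and the envelope bound \labelcref{theorem-GMMold6}. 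For the perturbation, assumption \labelcref{theorem-GMMnew8} combined with Jensen's inequality gives $\E\|\delta_{N,h_N}(\vartheta)\|\to 0$ pointwise, and stochastic equicontinuity follows from the Lipschitz property of $\nabla_\vartheta g$ in \labelcref{theorem-GMMold2} by a standard finite-covering argument on the compact set $\Theta$; together these yield $\sup_\vartheta\|\delta_{N,h_N}(\vartheta)\|\convp 0$, hence $\sup_\vartheta|Q_{N,h_N}(\vartheta)-Q_0(\vartheta)|\convp 0$ since $W_{N,h_N}\convp W$ by \labelcref{theorem-GMMold5}. Consistency of $\hat{\hat\vartheta}^{N,h_N}$ follows.

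For asymptotic normality, since $\vartheta_0$ lies in the interior of $\Theta$ and $\hat{\hat\vartheta}^{N,h_N}$ is consistent, the first-order condition $G_{N,h_N}(\hat{\hat\vartheta}^{N,h_N})^TW_{N,h_N}m_{N,h_N}(\hat{\hat\vartheta}^{N,h_N})=0$ holds with probability tending to one. A componentwise mean-value expansion of $m_{N,h_N}$ around $\vartheta_0$ (the intermediate points possibly differing by row, but this is harmless) yields, after rearrangement,
\begin{equation*}
N^{1/2}(\hat{\hat\vartheta}^{N,h_N}-\vartheta_0)=\left[G_{N,h_N}(\hat{\hat\vartheta}^{N,h_N})^TW_{N,h_N}G_{N,h_N}(\tilde\vartheta)\right]^{-1}G_{N,h_N}(\hat{\hat\vartheta}^{N,h_N})^TW_{N,h_N}\cdot N^{1/2}m_{N,h_N}(\vartheta_0).
\end{equation*}
The decisive step is the split $N^{1/2}m_{N,h_N}(\vartheta_0)=N^{1/2}\bar g_N(\vartheta_0)+N^{1/2}\delta_{N,h_N}(\vartheta_0)$. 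The first summand converges in distribution to $\mathscr{N}(\bzero_q,\Omega_0)$ by the multivariate central limit theorem applied to the i.i.d$.$ vectors $g(X_n,\vartheta_0)$, whose existence of second moments is guaranteed by \labelcref{theorem-GMMold4}. For the second summand, assumption \labelcref{theorem-GMMnew7} and Markov's inequality yield
\begin{equation*}
\E\bigl\|N^{1/2}\delta_{N,h_N}(\vartheta_0)\bigr\|\leq N^{1/2}\sup_n\E\bigl\|g(X_n+\varepsilon_n^{(h_N)},\vartheta_0)-g(X_n,\vartheta_0)\bigr\|=O\bigl(N^{1/2}\beta(h_N)\bigr)\to 0,
\end{equation*}
so that $N^{1/2}\delta_{N,h_N}(\vartheta_0)\convp\bzero_q$; this is precisely where the rate condition $N^{1/2}\beta(h_N)\to0$ enters. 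Assumption \labelcref{theorem-GMMnew9} combined with the ULLN (and again the Lipschitz bound from \labelcref{theorem-GMMold2}) gives uniform convergence of $G_{N,h_N}(\cdot)$ to $G(\cdot)$ in probability; composition with the consistent sequences $\hat{\hat\vartheta}^{N,h_N},\tilde\vartheta\convp\vartheta_0$ and continuity of $G$ produce $G_{N,h_N}(\tilde\vartheta),G_{N,h_N}(\hat{\hat\vartheta}^{N,h_N})\convp G_0$. Slutsky's lemma then yields the limiting normal distribution with covariance matrix \labelcref{eq-SigmaGMM}, identical to that of the undisturbed estimator.

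The main technical obstacle is upgrading the pointwise bounds in \labelcref{theorem-GMMnew8} and \labelcref{theorem-GMMnew9} to uniform convergence over $\Theta$, which is needed both for consistency and for the score-matrix step in the asymptotic normality argument. The cleanest route is to exploit the Lipschitz condition on $\nabla_\vartheta g$ inherited from \labelcref{theorem-GMMold2}: it allows one to bound the modulus of continuity of $\vartheta\mapsto g(\bx+\varepsilon,\vartheta)-g(\bx,\vartheta)$ uniformly in $(\bx,\varepsilon)$ by $\alpha(\bx)\|\vartheta_1-\vartheta_2\|$ (plus a similar bound using the integrated derivative), and a finite cover of $\Theta$ then reduces the supremum to finitely many pointwise applications of \labelcref{theorem-GMMnew8}--\labelcref{theorem-GMMnew9}. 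Once this stochastic equicontinuity is established, the remainder of the argument is essentially the classical GMM proof with the noise contribution absorbed into an $o_p(1)$ term.
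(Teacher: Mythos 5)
Your proposal is correct and follows essentially the same route as the paper's proof: the same decomposition of $N^{1/2}m_{N,h}(\vartheta_0)$ into the clean i.i.d.\ sample average (handled by the Lindeberg--L\'evy CLT) plus a perturbation term bounded in $L^1$ by $N^{1/2}\beta(h_N)$ via assumption vii), the same uniform-convergence argument for consistency, and the same mean-value expansion of the first-order condition for asymptotic normality. The only differences are organizational: the paper inserts an intermediate step establishing $N^{1/2}$-boundedness of approximate minimizers in the style of Newey--McFadden, which your direct route through the first-order condition renders unnecessary, while you are somewhat more explicit than the paper about the stochastic equicontinuity needed to upgrade the pointwise perturbation bounds in viii)--ix) to uniform convergence over $\Theta$.
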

The proof of \cref{theorem-GMMnew} closely follows the arguments in \citep{newey1994largesample}. We give a detailed proof in order to clarify the impact of the additional parameter $h$ and the difficulties arising from the need to take the double limit $N\to\infty$ and $h\to 0$.

\begin{proof}[Proof of \cref{theorem-GMMnew}]
The proof consists of four steps. In step 1 we show that $N^{1/2}m_{N,h}(\vartheta_0)$ is asymptotically normally distributed with mean zero and covariance matrix $\Omega_0$, that $m_{N,h}(\vartheta)$, $G_{N,h}(\vartheta)$ and $\Omega_{N,h}(\vartheta)$ converge uniformly in probability to $\E g\left(X_1,\vartheta\right)$, $G(\vartheta)$ and $\Omega(\vartheta)$, respectively, and that $N\,Q_{N,h}(\vartheta_0)$ is bounded in probability. The second step consists in showing that any estimator $\tilde\vartheta^{N,h}$ that approximately minimizes the criterion function $Q_{N,h}$ in the sense that $m_{N,h}(\tilde\vartheta^{N,h})\convp 0$, converges in probability to $\vartheta_0$. In step 3 we prove that stochastic boundedness of $N\,Q_{N,h}(\tilde\vartheta^{N,h})$ implies the stochastic boundedness of $N^{1/2}(\tilde\vartheta^{N,h}-\vartheta_0)$. We will see that steps 2 and 3 imply the consistency of $\hat{\hat\vartheta}^{N,h}$ for any sequence of weighting matrices $W_{N,h}$. In the last step the mean-value 
theorem is applied to the first-order condition for $\hat{\hat\vartheta}^{N,h}$ to prove the asymptotic normality of $N^{1/2}(\hat{\hat\vartheta}^{N,h}-\vartheta_0)$.
\paragraph*{\bf Step 1}
In order to prove that $N^{1/2}m_{N,h}(\vartheta_0)$ is asymptotically normally distributed we observe that
\begin{equation*}
N^{1/2}m_{N,h}(\vartheta_0) = \frac{1}{N^{1/2}}\sum_{n=1}^N{g(X_n,\vartheta_0)} + \frac{1}{N^{1/2}}\sum_{n=1}^N{\left[g\left(X_n+\varepsilon_n^{(h)},\vartheta_0\right)-g(X_n,\vartheta_0)\right]}.
\end{equation*}
The first term in this expression is asymptotically normal by the Lindeberg-L\'evy Central Limit Theorem (\citep[Theorem 15.37]{klenke2008probability}) since the summands $g(X_n,\vartheta_0)$ are i.i.d$.$ with finite variance. It therefore suffices to show that the second term converges to zero in probability as $N\to\infty$ if $h=h_N$ satisfies $N^{1/2}\beta(h_N)\to 0$. For convenience we introduce the notation $Y_n^{(h)}=g(X_1+\varepsilon_1^{(h)},\vartheta_0)-g(X_1,\vartheta_0)$; by the linearity of expectation and assumption \labelcref{theorem-GMMnew7} it follows that
\begin{equation}
\label{eq-L1conv1}
\E\left\|N^{-1/2}\sum_{n=1}^N{Y_n^{(h)}}\right\| \leq N^{-1/2}\sum_{n=1}^N{\E\left\|Y_n^{(h)}\right\|}\leq CN^{1/2}\beta(h),\quad \text{for some $C>0$.}
\end{equation}
This proves that $N^{-1/2}\sum_{n=1}^N{Y_n^{(h_N)}}$ converges in $L^1$, and hence in probability, to zero, thereby showing the asymptotic normality of $N^{1/2}m_{N,h}(\vartheta_0)$, that is
\begin{equation}
\label{eq-asymptoticnormality-sqrt-N-m}
\Omega_0^{-1/2}N^{1/2}m_{N,h}(\vartheta_0)\eqqcolon U_{N,h}\convd U\sim \mathscr{N}(\bzero_q,\I_q),\quad \text{ as } N\to\infty,h\to 0, N^{1/2}\beta(h)\to 0.  
\end{equation}
We now turn to the uniform convergence in probability of $m_{N,h}(\vartheta)$, $G_{N,h}(\vartheta)$ and $\Omega_{N,h}(\vartheta)$: pointwise convergence of $m_{N,h}(\vartheta)$ to $\E g\left(X_1,\vartheta\right)$ follows from the observation that
\begin{equation*}
m_{N,h}(\vartheta) = \frac{1}{N}\sum_{n=1}^N{g\left(X_n,\vartheta\right)} + \frac{1}{N}\sum_{n=1}^N{\left[g\left(X_n+\varepsilon_n^{(h)},\vartheta\right)-g\left(X_n,\vartheta\right)\right]}.
\end{equation*}
As a sample average the first term converges to $\E g\left(X_1,\vartheta\right)$ as $N\to\infty$ by the law of large numbers (\citep[Theorem 5.16]{klenke2008probability}). As in \cref{eq-L1conv1} one sees that the second term converges in $L^1$ and therefore in probability to zero as $N\to\infty$ and $h\to 0$. Analogously,
\begin{equation*}
G_{N,h}(\vartheta) = \frac{1}{N}\sum_{n=1}^N{\nabla_\vartheta g\left(X_n,\vartheta\right)} + \frac{1}{N}\sum_{n=1}^N{\left[\nabla_\vartheta g\left(X_n+\varepsilon_n^{(h)},\vartheta\right)-\nabla_\vartheta g\left(X_n,\vartheta\right)\right]}.
\end{equation*}
converges pointwise in probability to $G(\vartheta)=-\E\nabla g\left(X_1,\vartheta\right)$ by assumption \labelcref{theorem-GMMnew9}. Finally
\begin{align*}
\Omega_{N,h}(\vartheta) =& \frac{1}{N}\sum_{n=1}^N{g\left(X_n,\vartheta\right)g\left(X_n,\vartheta\right)^T} + \frac{1}{N}\sum_{n=1}^N{Y_n^{(h)}\left(Y_n^{(h)}\right)^T} \\
  &+\frac{1}{N}\sum_{n=1}^N{g\left(X_n,\vartheta\right)\left(Y_n^{(h)}\right)^T}+\frac{1}{N}\sum_{n=1}^N{Y_n^{(h)}g\left(X_n,\vartheta\right)^T},
\end{align*}
where we have again used the notation $Y_n^{(h)}=g\left(X_n+\varepsilon_n^{(h)},\vartheta\right)-g\left(X_n,\vartheta\right)$. The first term in this expression for $\Omega_{N,h}(\vartheta)$ converges to $\Omega(\vartheta)=\E g\left(X_1,\vartheta\right)g\left(X_1,\vartheta\right)^T$ by the law of large numbers, the second term converges to zero in $L^1$ and in probability due to assumption \labelcref{theorem-GMMnew8}. An application of the Cauchy-Schwarz inequality to the third term shows that
\begin{align*}
\E\left\|\frac{1}{N}\sum_{n=1}^N{g\left(X_n,\vartheta\right)\left(Y_n^{(h)}\right)^T}\right\| \leq& \frac{1}{N}\sum_{n=1}^N{\E\left\|g\left(X_n,\vartheta\right)\left(Y_n^{(h)}\right)^T\right\|}\\
  \leq & \sup_n\E\left\|g\left(X_n,\vartheta\right)\left(Y_n^{(h)}\right)^T\right\|\\
  \leq & \sup_n\E\left\|g\left(X_n,\vartheta\right)\right\|\left\|Y_n^{(h)}\right\|  \leq  \sqrt{\E\left\|g\left(X_1,\vartheta\right)\right\|^2}\sqrt{\sup_n\E\left\|Y_n^{(h)}\right\|^2}.
\end{align*}
The first factor is finite by assumption \labelcref{theorem-GMMold4}, the second one converges to zero as $h\to 0$ by assumption \labelcref{theorem-GMMnew8}. By assumptions \labelcref{theorem-GMMold2,theorem-GMMold6}, the limiting functions $\vartheta\mapsto \E g\left(X_1,\vartheta\right)$, $\vartheta\mapsto G(\vartheta)$ and $\vartheta\mapsto \Omega(\vartheta)$ are continuous and dominated and since the domain $\Theta$ is compact by assumption \labelcref{theorem-GMMold1}) we can apply \cref{lemma-uniformWLLN} to conclude that the convergence is uniform in $\vartheta$. Taking into consideration the assumed convergence in probability of $W_{N,h}$ (assumption \labelcref{theorem-GMMold5}) as well as \cref{eq-asymptoticnormality-sqrt-N-m}, \cref{lemma-basicconvp} implies that $N\, Q_{N,h}(\vartheta_0)$ is bounded in probability.
\paragraph*{\bf Step 2}
In this step the consistency of any estimator $\tilde\vartheta^{N,h}$ satisfying $Q_{N,h}(\tilde\vartheta^{N,h})\convp 0$ is proved. In step 1 we have established the uniform convergence in probability of $m_{N,h}(\vartheta)$ to $\E g\left(X_1,\vartheta\right)$. Together with assumption v) this implies that $\sup_{\vartheta\in\Theta}\left|Q_{N,h}(\vartheta)-\left\|\E g\left(X_1,\vartheta\right)\right\|^2_{W}\right|\convp 0$. To establish consistency of $\tilde\vartheta^{N,h}$ we shall show that for any neighbourhood $U$ of $\vartheta_0$ and every $\epsilon>0$ there exists an $N_\epsilon(U)$ and an $h_\epsilon(U)$ such that $\Pb\left(\tilde\vartheta^{N,h}\in U\right)\geq 1-\epsilon$ for all $N>N_\epsilon(U)$, $h<h_\epsilon(U)$. For given $U$ we define $\delta(U)\coloneqq \inf_{\vartheta\in\Theta\backslash U}\left\|\E g\left(X_1,\vartheta\right)\right\|_W$ which is strictly positive by assumptions \labelcref{theorem-GMMold1,theorem-GMMold2,theorem-GMMold3}. Choosing $N_\epsilon(U)$ and $h_\epsilon(U)$ such that
\begin{align*}
\Pb\left(Q_{N,h}(\tilde\vartheta^{N,h})\leq \delta(U)/2\right)\geq &1-\epsilon/2,\\
\Pb\left(\sup_{\vartheta\in\Theta}\left|Q_{N,h}(\vartheta)-\left\|\E g\left(X_1,\vartheta\right)\right\|_{W}\right|\leq \delta(U)/2\right)\geq& 1-\epsilon/2
\end{align*}
for all $N\geq N_\epsilon(U)$ and $h\leq h_\epsilon(U)$ it follows that
\begin{align*}
\Pb\left(\tilde\vartheta^{N,h}\in U\right) \geq & \Pb\left(\left\|\E g(X_1,\tilde\vartheta^{N,h})\right\|_W\leq \delta(U)\right)\\
  \geq & \Pb\left(Q_{N,h}(\tilde\vartheta^{N,h})\leq \frac{\delta(U)}{2} \text{ and } \sup_{\vartheta\in\Theta}\left|Q_{N,h}(\vartheta)-\left\|\E g\left(X_1,\vartheta\right)\right\|_W\right|\leq \frac{\delta(U)}{2}\right)\\
  \geq & 1-\epsilon,
\end{align*}
where in the last line we used the relation $\Pb(A\cap B)\geq\Pb(A)+\Pb(B)-1$.
\paragraph*{\bf Step 3}
This step is devoted to the implication that if $N\,Q_{N,h}(\tilde\vartheta^{N,h})$ is bounded in probability, then the sequence $N^{1/2}(\tilde\vartheta^{N,h}-\vartheta_0)$ is bounded in probability as well. The assumption $N\,Q_{N,h}(\tilde\vartheta^{N,h})=O_p(1)$ implies that $Q_{N,h}(\tilde\vartheta^{N,h})\convp 0$ and therefore, by the previous step, that $\tilde\vartheta^{N,h}\convp\vartheta_0$. By the mean-value theorem there exist $\vartheta^*_i\in\Theta$ of the form $\vartheta^*_i = \vartheta_0 + c_i(\tilde\vartheta^{N,h} - \vartheta_0)$, $0\leq c_i\leq 1$, $i=1,\ldots,r$, such that we can write
\begin{align}
\label{eq-mvTaylor}N^{1/2}m_{N,h}(\tilde\vartheta^{N,h}) =& N^{1/2}m_{N,h}(\vartheta_0) + \nabla_\vartheta m_{N,h}(\underline\vartheta^*)N^{1/2}(\tilde\vartheta^{N,h}-\vartheta_0)\notag\\
=&\Omega_0^{1/2}U_{N,h} - G_{N,h}(\underline\vartheta^*)N^{1/2}(\tilde\vartheta^{N,h}-\vartheta_0),
\end{align}
where $G_{N,h}(\underline\vartheta^*)$ denotes the matrix whose $i$th row coincides with the $i$th row of $G(\vartheta^*_i)$ and $U_{N,h}$ is defined in \cref{eq-asymptoticnormality-sqrt-N-m}. By applying the triangle inequality of the norm $\left\|\cdot\right\|_{W_{N,h}}$ to the vector
\begin{equation*}
G_{N,h}(\underline\vartheta^*)N^{1/2}(\tilde\vartheta^{N,h}-\vartheta_0)=\Omega_0^{1/2}U_{N,h}-N^{1/2}m_{N,h}(\tilde\vartheta^{N,h})
\end{equation*}
one obtains that
\begin{equation*}
\left\|G_{N,h}(\underline\vartheta^*)N^{1/2}(\tilde\vartheta^{N,h}-\vartheta_0)\right\|^2_{W_{N,h}}\leq 2\left\|\Omega_0^{1/2}U_{N,h}\right\|^2_{W_{N,h}} + 2N\,Q_{N,h}(\tilde\vartheta^{N,h}).
\end{equation*}
Since $U_{N,h}$ converges in distribution to a standard normal and $W_{N,h}$ converges in probability, the first term on the right hand side of the last display converges in distribution by \cref{lemma-basicconvp} and is in particular bounded in probability. By our hypothesis, $N\,Q_{N,h}(\tilde\vartheta^{N,h})$ is bounded in probability and so it follows that $\left\|G_{N,h}(\underline\vartheta^*)N^{1/2}(\tilde\vartheta^{N,h}-\vartheta_0)\right\|_{W_{N,h}}$ is bounded in probability as well. It follows from the uniform convergence in probability of $G_{N,h}(\vartheta)$ to $G(\vartheta)$, the fact that $\vartheta_i^*\convp\vartheta_0$ and \cref{lemma-continuousmapping} applied to the rows of $G_{N,h}$ that $G_{N,h}(\underline\vartheta^*)^TW_{N,h}G_{N,h}(\underline\vartheta^*)\convp G_0^TWG_0$, which in turn implies that $N^{1/2}(\tilde\vartheta^{N,h}-\vartheta_0)$ is bounded in probability.
\paragraph*{\bf Step 4}
In this last step we prove that the estimator $\hat{\hat\vartheta}^{N,h}=\argmin_{\vartheta\in\Theta}Q_{N,h}(\vartheta)$ is asymptotically normally distributed. The definition of $\hat{\hat\vartheta}^{N,h}$ implies that $Q_{N,h}(\hat{\hat\vartheta}^{N,h})\leq Q_{N,h}(\vartheta_0)$. We have shown in step 1 that $N\, Q_{N,h}(\vartheta_0)$ is bounded in probability and hence so is $N\, Q_{N,h}(\hat{\hat\vartheta}^{N,h})$. This implies by step 2 that $\hat{\hat\vartheta}^{N,h}$ is consistent and that $N^{1/2}(\hat{\hat\vartheta}^{N,h}-\vartheta_0)$ is bounded in probability. Since $\hat{\hat\vartheta}^{N,h}$ is an extremal point of $Q_{N,h}$ we obtain by setting the derivative equal to zero that $G_{N,h}(\hat{\hat\vartheta}^{N,h})^TW_{N,h}N^{1/2}m_{N,h}(\hat{\hat\vartheta}^{N,h})=0$. By combining the Taylor expansion \labelcref{eq-mvTaylor} with this first-order condition it follows that
\begin{equation*}
0 = G_{N,h}(\hat{\hat\vartheta}^{N,h})^TW_{N,h}\Omega_0^{1/2}U_{N,h} - G_{N,h}(\hat{\hat\vartheta}^{N,h})^TW_{N,h}G_{N,h}(\underline\vartheta^*)N^{1/2}(\hat{\hat\vartheta}^{N,h}-\vartheta_0).
\end{equation*}
As before one sees that $G_{N,h}(\hat{\hat\vartheta}^{N,h})^TW_{N,h}G_{N,h}(\underline\vartheta^*)$ converges in probability to the non-singular limit $G_0^T W G_0$, which means that $N^{1/2}(\hat{\hat\vartheta}^{N,h}-\vartheta_0) = \left[G_{N,h}(\hat{\hat\vartheta}^{N,h})^TW_{N,h}G_{N,h}(\underline\vartheta^*)\right]^{-1}G_{N,h}(\hat{\hat\vartheta}^{N,h})^TW_{N,h}\Omega_0^{1/2}U_{N,h}$ exists with probability approaching one. Since
\begin{equation*}
\left[G_{N,h}(\hat{\hat\vartheta}^{N,h})^TW_{N,h}G_{N,h}(\underline\vartheta^*)\right]^{-1}G_{N,h}(\hat{\hat\vartheta}^{N,h})^TW_{N,h} \convp \left[G_0^TWG_0\right]^{-1}G_0^TW,
\end{equation*}
it follows from \cref{lemma-basicconvp} that $N^{1/2}(\hat{\hat\vartheta}^{N,h}-\vartheta_0)\convd \left[G_0^TWG_0\right]^{-1}G_0^TW\Omega_0^{1/2}U$, a normally distributed random vector with covariance matrix $\Sigma = \left[G_0^TWG_0\right]^{-1}G_0^TW\Omega_0 W G_0\left[G_0^TWG_0\right]^{-1}$. If the dimension $r$ of the parameter space $\Theta$ is equal to the dimension $q$ of the moment vector and the matrix $G_0$ is thus square or if $W=\Omega_0^{-1}$ it follows that $\Sigma = \left[G_0^T\Omega_0^{-1} G_0\right]^{-1}$.
\end{proof}
\begin{remark}
It seems possible to extend most aspects of the asymptotic theory of the generalized method of moments beyond the Central Limit Theorem \labelcref{theorem-GMMold} to deal, for example, with non-compact parameter spaces and applications to hypothesis testing  based on a disturbed sample as in Theorem 6.2.  We choose not to pursue these possibilities further in the present paper.
\end{remark}

In view of \cref{lemma-continuousmapping}, assumption \labelcref{theorem-GMMold5} of \cref{theorem-GMMnew} is satisfied if we choose $W_{N,h} = W_{N,h}(\bar\vartheta^{N,h})$ where $\bar\vartheta^{N,h}$ is a consistent estimator of $\vartheta_0$ and the functions $\vartheta\mapsto W_{N,h}(\vartheta)$ converge uniformly in probability to $\vartheta\mapsto W(\vartheta)$. In this way one can construct a sequence $W_{N,h}$ of weighting matrices converging in probability to $\Omega_0^{-1}$. For this two-stage GMM estimation procedure one has the following optimality result.
\begin{corollary}
\label{corollary-optimalW}
Let $\tilde\vartheta^{N,h}$ be the estimate of $\vartheta$ obtained from maximizing the $W$-norm of $m_{N,h}(\vartheta)$ for any fixed $q\times q$ positive definite matrix $W$ and let $\hat{\hat\vartheta}^{N,h}$ be the estimate obtained from using the random weighting matrix
\begin{equation}
\widetilde W_{N,h} = \Omega_{N,h}(\tilde\vartheta^{N,h})^{-1}=\left[\frac{1}{N}\sum_{n=1}^N{g\left(X_n+\varepsilon_n^{(h)},\tilde\vartheta^{N,h}\right)^Tg\left(X_n+\varepsilon_n^{(h)},\tilde\vartheta^{N,h}\right)}\right]^{-1}.
\end{equation}
Under the conditions of \cref{theorem-GMMnew}, the estimator $\hat{\hat\vartheta}^{N,h}$ is consistent and asymptotically normally distributed. In the partial order induced by positive semidefiniteness, the asymptotic covariance matrix of the limiting normal distribution, $\left[G_0^T\Omega_0^{-1}G_0\right]^{-1}$, is smaller than or equal to the covariance matrix obtained from every other sequence of weighting matrices $W_{N,h}$.
\end{corollary}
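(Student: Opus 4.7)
The plan is to apply \cref{theorem-GMMnew} twice and then to derive the optimality claim by a standard matrix identity. First, since the constant sequence $W_{N,h} = W$ trivially satisfies assumption \labelcref{theorem-GMMold5}, \cref{theorem-GMMnew} applies and yields consistency and asymptotic normality of the first-stage estimator $\tilde\vartheta^{N,h}$; in particular, $\tilde\vartheta^{N,h}\convp\vartheta_0$.

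Second, I would verify that the sequence $\widetilde W_{N,h}$ itself satisfies assumption \labelcref{theorem-GMMold5} with limit $\Omega_0^{-1}$. In Step 1 of the proof of \cref{theorem-GMMnew} it was shown that $\vartheta\mapsto\Omega_{N,h}(\vartheta)$ converges uniformly in probability on the compact set $\Theta$ to the continuous function $\vartheta\mapsto\Omega(\vartheta)$. Combining this with the consistency of $\tilde\vartheta^{N,h}$ via \cref{lemma-continuousmapping} gives $\Omega_{N,h}(\tilde\vartheta^{N,h})\convp\Omega(\vartheta_0)=\Omega_0$. Since $\Omega_0$ is positive definite by assumption \labelcref{theorem-GMMold4} and matrix inversion is continuous on the open set of positive definite matrices, a further application of the continuous mapping theorem yields $\widetilde W_{N,h}\convp \Omega_0^{-1}$. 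A second application of \cref{theorem-GMMnew}, this time with $W=\Omega_0^{-1}$, then gives the consistency and asymptotic normality of $\hat{\hat\vartheta}^{N,h}$, and direct substitution into the general covariance formula \labelcref{eq-SigmaGMM} simplifies to
\[
 [G_0^T\Omega_0^{-1}G_0]^{-1}G_0^T\Omega_0^{-1}\Omega_0\Omega_0^{-1}G_0[G_0^T\Omega_0^{-1}G_0]^{-1} = [G_0^T\Omega_0^{-1}G_0]^{-1}.
\]

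For the optimality claim, introduce the $r\times q$ matrices $A_W = [G_0^T W G_0]^{-1}G_0^T W$ and $A_* = [G_0^T\Omega_0^{-1} G_0]^{-1}G_0^T \Omega_0^{-1}$, both of which are left inverses of $G_0$, so that $(A_W - A_*)G_0 = 0$. From this one computes directly
\[
 (A_W - A_*)\Omega_0 A_*^T = (A_W G_0 - A_* G_0)[G_0^T\Omega_0^{-1}G_0]^{-1} = 0.
\]
Writing $A_W = (A_W - A_*) + A_*$ and expanding $A_W\Omega_0 A_W^T$, the cross terms vanish by the identity above, leaving
\[
 \Sigma(W) - [G_0^T\Omega_0^{-1}G_0]^{-1} = A_W\Omega_0 A_W^T - A_*\Omega_0 A_*^T = (A_W - A_*)\Omega_0 (A_W - A_*)^T \geq 0,
\]
which is the required inequality in the positive semidefinite order.

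There is no substantial obstacle in this proof: the delicate point is ensuring that the random, data-dependent weighting matrix $\widetilde W_{N,h}$ satisfies the convergence assumption \labelcref{theorem-GMMold5} of \cref{theorem-GMMnew}, which is handled cleanly by combining the uniform convergence of $\Omega_{N,h}$ already established inside the proof of \cref{theorem-GMMnew} with the consistency of the first-stage estimator through \cref{lemma-continuousmapping}.
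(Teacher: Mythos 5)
Your proof is correct, and the first part --- consistency of the preliminary estimator, uniform convergence in probability of $\vartheta\mapsto\Omega_{N,h}(\vartheta)$ established inside the proof of \cref{theorem-GMMnew}, \cref{lemma-continuousmapping} to get $\widetilde W_{N,h}\convp\Omega_0^{-1}$, and a second application of \cref{theorem-GMMnew} --- is exactly the paper's argument. The only divergence is in the efficiency bound: the paper writes the difference $\Delta=\Sigma(W)-[G_0^T\Omega_0^{-1}G_0]^{-1}$ as a conjugation $B^T\bigl[\I_r-\Omega_0^{-1/2}G_0(G_0^T\Omega_0^{-1}G_0)^{-1}G_0^T\Omega_0^{-1/2}\bigr]B$ and invokes idempotency of the middle factor, whereas you observe that $A_W$ and $A_*$ are both left inverses of $G_0$, so the cross terms in $A_W\Omega_0A_W^T$ cancel and $\Delta=(A_W-A_*)\Omega_0(A_W-A_*)^T\geq 0$. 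These are two standard renderings of the same positive semidefinite decomposition (indeed $(A_W-A_*)\Omega_0^{1/2}$ equals the paper's outer factor times the projection complement); yours is marginally more elementary in that it needs no appeal to idempotency, while the paper's makes the underlying projection geometry explicit. Either way the argument is complete.
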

\begin{proof}
It has been shown in the proof of \cref{theorem-GMMnew} that the preliminary estimator $\tilde\vartheta^{N,h}$ is consistent and that the sequence of functions $\vartheta\mapsto\Omega_{N,h}(\vartheta)$ converges uniformly in probability to the function $\vartheta\mapsto\Omega(\vartheta)$. It then follows from \cref{lemma-continuousmapping} that the sequence $\widetilde W_{N,h}$ of weighting matrices converges in probability to $\Omega_0^{-1}$ and from \cref{theorem-GMMnew} that $\hat{\hat\vartheta}^{N,h}$ is asymptotically normal with asymptotic covariance matrix 
\begin{equation*}
\left[G_0^T\Omega_0^{-1}G_0\right]^{-1}G_0^T\Omega_0^{-1}\Omega_0 \Omega_0^{-1}G_0\left[G_0^T\Omega_0^{-1}G_0\right]^{-1} = \left[G_0^T \Omega_0^{-1}G_0\right]^{-1}.
\end{equation*}
To show that this is smaller than or equal to the asymptotic covariance matrix of an estimator obtained from using a sequence of weighting matrices that converges in probability to the positive definite matrix $W$ we must show that the matrix
\begin{equation*}
\Delta=\left[G_0^TWG_0\right]^{-1}G_0^TW\Omega_0 W G_0\left[G_0^TWG_0\right]^{-1} - \left[G_0^T \Omega_0^{-1}G_0\right]^{-1}
\end{equation*}
is positive semi-definite. To see this it is enough to note that $\Delta$ can be written as
\begin{equation*}
 \Delta = \left[\Omega_0^{1/2}WG_0\left(G_0^TWG_0\right)^{-1}\right]^T\left[\I_r-\Omega_0^{-1/2}G_0\left(G_0^T\Omega_0^{-1}G_0\right)^{-1}G_0^T\Omega_0^{-1/2}\right]\left[\Omega_0^{1/2}WG_0\left(G_0^TWG_0\right)^{-1}\right].
\end{equation*}
Since the factor in the middle is idempotent and therefore positive semidefinite and semidefiniteness is preserved under conjugation, the matrix $\Delta$ is positive semidefinite.
\end{proof}

We can now state and prove our main result about the asymptotic properties of the generalized method of moments estimation of the driving L\'evy process of a multivariate CARMA process from discrete observations. This method can be used to select a suitable driving process from within a parametric family of L\'evy processes as part of specifying a CARMA model for an observed time series.
We assume that $\Theta$ is a parameter space and that $(\Lb_\vartheta)_{\vartheta\in\Theta}$ is a family of L\'evy processes. The process $\Y$ is an $\Lb_{\bth_0}$-driven multivariate CARMA(p,q) process given by a state space representation of the form \labelcref{eq-statespacerepmod} and we assume that $h$-spaced observations $\Y(0),\Y(h),\ldots,\Y(N+(p-q-1)h)$ of $\Y$ are available on the discrete time grid $(0,h,\ldots,N+(p-q-1)h)$. Based on these observed values, a set of $N$ approximate unit increments $\widehat{\Delta\Lb}_n^{(h)}$, $n=1,\ldots,N$, of the driving process is computed using \cref{eq-DefDeltaLhn}. For each integer $N$ and each sampling frequency $h^{-1}\in\N$, a generalized method of moments estimator is defined as in \cref{eq-DefhathatvarthetaNh} by
\begin{equation}
\label{eq-DefhathatvarthetaNhL}
\hat{\hat\vartheta}^{N,h} = \argmin_{\vartheta\in\Theta}\left\|\frac{1}{N}\sum_{n=1}^Ng\left(\widehat{\Delta\Lb}_n^{(h)},\vartheta\right)\right\|_{W_{N,h}},
\end{equation}
where $g:\R^m\times\Theta\to\R^q$ is a moment function and $W_{N,h}\in M_q(\R)$ is a positive definite weighting matrix. The following theorem asserts that the sequence $(\hat{\hat\vartheta}^{N,h_N})_N$ of estimators is consistent and asymptotically normally distributed if $h_N$ is chosen such that $Nh_N$ converges to zero.
\begin{theorem}
\label{theorem-GMMLevy}
Assume that $\Theta\subset\R^r$ is a parameter space, that $(\Lb_\vartheta)_{\vartheta\in\Theta}$ is a family of $m$-dimensional L\'evy processes and that $\Y$ is an $\Lb_{\vartheta_0}$-driven multivariate CARMA process satisfying \cref{assum-eigA,assum-eigB}. Denote by $\hat{\hat\vartheta}^{N,h}$ the generalized method of moments estimator defined in \cref{eq-DefhathatvarthetaNhL}. Assume that, for some integer $k$, the functions $f_\vartheta:\bx\mapsto g(\bx,\vartheta)$ possess a bounded $k$th derivative, that $\E\left\|\Lb_{\vartheta_0}(1)\right\|^{2k}$ is finite and that the partial derivatives of the functions $f_\vartheta$ satisfy
\begin{equation}
\label{eq-partialf}
\E\left\|\ppartial_{i_1}\cdot\ldots\cdot\ppartial_{i_\kappa}f_\vartheta\left(\Lb_{\vartheta_0}(1)\right)\right\|^{2k}<\infty,\quad 1\leq i_1,\ldots,i_\kappa\leq m,\quad 1\leq \kappa\leq k-1,\quad\vartheta\in\Theta.
\end{equation}
Further assume that, for each $\bx\in\R^m$, the function $\vartheta\mapsto g(\bx,\vartheta)$ is differentiable, that, for some integer $l$, the functions $h_\vartheta:\bx\mapsto\nabla_\vartheta g(\bx,\vartheta)$ have a bounded $l$th derivative and that the partial derivatives of $h_\vartheta$ satisfy
\begin{equation}
\label{eq-partialh}
\E\left\|\ppartial_{i_1}\cdot\ldots\cdot\ppartial_{i_\lambda}h_\vartheta\left(\Lb_{\vartheta_0}(1)\right)\right\|^l<\infty,\quad 1\leq i_1,\ldots,i_\lambda\leq m,\quad 1\leq \lambda\leq l-1,\quad\vartheta\in\Theta.
\end{equation}
If, in addition, assumptions \labelcref{theorem-GMMold1,theorem-GMMold2,theorem-GMMold3,theorem-GMMold4,theorem-GMMold5,theorem-GMMold6} of \cref{theorem-GMMold} are satisfied with $X_1$ replaced by $\Lb_{\vartheta_0}(1)$, and if $h=h_N$ is chosen dependent on $N$ such that $Nh_N$ converges to zero as $N$ tends to infinity, then the estimator $\hat{\hat\vartheta}^{N,h_N}$ is consistent and asymptotically normally distributed with asymptotic covariance matrix given in \cref{eq-SigmaGMM}.
\end{theorem}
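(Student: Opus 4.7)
The plan is to recognize that this theorem is essentially a direct application of \cref{theorem-GMMnew} with the identifications $X_n = \Delta\Lb_n$ and $\varepsilon_n^{(h)} = \widehat{\Delta\Lb}_n^{(h)} - \Delta\Lb_n$. Since $\Lb_{\vartheta_0}$ is a L\'evy process, its unit increments $(\Delta\Lb_n)_{n\in\N}$ form an i.i.d.\ sample from the distribution $\Pb_{\vartheta_0}$ of $\Lb_{\vartheta_0}(1)$, and by \cref{eq-hatDeltaL} the observed quantities $\widehat{\Delta\Lb}_n^{(h)}$ are exactly $X_n + \varepsilon_n^{(h)}$. Assumptions \labelcref{theorem-GMMold1,theorem-GMMold2,theorem-GMMold3,theorem-GMMold4,theorem-GMMold5,theorem-GMMold6} of \cref{theorem-GMMold} are directly imposed, so it remains to verify assumptions \labelcref{theorem-GMMnew7,theorem-GMMnew8,theorem-GMMnew9} of \cref{theorem-GMMnew} and to check that the rate condition $N^{1/2}\beta(h_N)\to 0$ is implied by $Nh_N\to 0$.

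The key tool for this verification is \cref{lemma-convpTaylorexp}, which translates moment bounds on the noise $\beps_n^{(h)}$ into moment bounds on the difference $f(\widehat{\Delta\Lb}_n^{(h)})-f(\Delta\Lb_n)$ for a sufficiently smooth function $f$. To check assumption \labelcref{theorem-GMMnew7}, I would apply \cref{lemma-convpTaylorexp} with $f = g(\cdot,\vartheta_0)$ and $l=1$: the hypothesis that $f_{\vartheta_0}$ has a bounded $k$th derivative, together with the partial-derivative moment bound \labelcref{eq-partialf} and the assumption $\E\|\Lb_{\vartheta_0}(1)\|^{2k}<\infty$ (which, since $k$ is an integer, gives $(k\cdot 1)_0 \leq 2k$), yields
\begin{equation*}
\sup_{n\in\N}\E\left\|g\bigl(\widehat{\Delta\Lb}_n^{(h)},\vartheta_0\bigr)-g(\Delta\Lb_n,\vartheta_0)\right\| = O(h^{1/2}),
\end{equation*}
so one may take $\beta(h)=h^{1/2}$. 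Assumption \labelcref{theorem-GMMnew8} follows from a second application of \cref{lemma-convpTaylorexp} with $l=2$, for which the moment hypothesis $\E\|\Lb_{\vartheta_0}(1)\|^{(2k)_0}=\E\|\Lb_{\vartheta_0}(1)\|^{2k}<\infty$ is exactly what is assumed. Assumption \labelcref{theorem-GMMnew9} is obtained by invoking \cref{lemma-convpTaylorexp} once more, now with $f=h_\vartheta=\nabla_\vartheta g(\cdot,\vartheta)$ and $l=1$, using the bounded $l$th derivative assumption on $h_\vartheta$ and the partial-derivative bound \labelcref{eq-partialh}; this gives the required convergence to zero as $h\to 0$ of $\sup_n \E\|\nabla_\vartheta g(\widehat{\Delta\Lb}_n^{(h)},\vartheta)-\nabla_\vartheta g(\Delta\Lb_n,\vartheta)\|$.

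Finally, with $\beta(h)=h^{1/2}$ the rate condition of \cref{theorem-GMMnew} reads $N^{1/2}h_N^{1/2}=(Nh_N)^{1/2}\to 0$, which is precisely the standing hypothesis $Nh_N\to 0$. Invoking \cref{theorem-GMMnew} then yields the claimed consistency and asymptotic normality of $\hat{\hat\vartheta}^{N,h_N}$ with asymptotic covariance matrix given by \cref{eq-SigmaGMM}. The main conceptual obstacle lies not in the application of \cref{theorem-GMMnew} itself but in making sure that the smoothness and moment hypotheses on $g$ and on $\Lb_{\vartheta_0}$ are strong enough to invoke \cref{lemma-convpTaylorexp} at the two exponents $l=1$ and $l=2$ simultaneously; this is exactly why the theorem imposes the bound $\E\|\Lb_{\vartheta_0}(1)\|^{2k}<\infty$ (rather than merely the $k$th absolute moment) and the bound \labelcref{eq-partialf} at exponent $2k$. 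All other steps are essentially bookkeeping once the identification $\varepsilon_n^{(h)}=\widehat{\Delta\Lb}_n^{(h)}-\Delta\Lb_n$ is made and the bound from \cref{theorem-Levyapprox}\labelcref{theorem-Levyapprox-moments} is propagated through \cref{lemma-convpTaylorexp}.
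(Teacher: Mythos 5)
Your proposal is correct and follows essentially the same route as the paper: the paper's own proof simply checks conditions \labelcref{theorem-GMMnew7,theorem-GMMnew8,theorem-GMMnew9} of \cref{theorem-GMMnew} via \cref{lemma-convpTaylorexp}, takes $\beta(h)=h^{1/2}$, and observes that $N^{1/2}\beta(h_N)\to 0$ reduces to $Nh_N\to 0$. Your version merely spells out the choices of $l$ in the three applications of \cref{lemma-convpTaylorexp}, which the paper leaves implicit.
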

\begin{proof}
It suffices to check conditions \labelcref{theorem-GMMnew7,theorem-GMMnew8,theorem-GMMnew9} of \cref{theorem-GMMnew}. All three conditions follow by assumptions \labelcref{eq-partialf,eq-partialh} from \cref{lemma-convpTaylorexp}, which also shows that the function $\beta$ in \labelcref{theorem-GMMnew7} can be taken as $\beta:h\mapsto h^{1/2}$. Consequently, the assumption $N^{1/2}\beta(h_N)\to 0$ from \cref{theorem-GMMnew} simplifies to the requirement that $Nh_N$ converges to zero and the result follows.
\end{proof}
\begin{remark}
If we introduce the notation $n=N/h$ for the total number of observations of the MCARMA process, the high-frequency condition from \cref{theorem-GMMLevy} becomes $nh_n^2\to 0$, which is the rate commonly encountered in the literature when dealing with the estimation of continuous-time processes.
\end{remark}

\section{Simulation study}
\label{section-simulation}
\FloatBarrier
In this section we illustrate the estimation procedure developed in this paper using the example of a univariate CARMA(3,1) process $\Y$ driven by a Gamma process.
A similar example was considered in \citep{brockwell2010estimation} as a model for the realized volatility of DM/\$ exchange rates. 
Gamma processes are a family of univariate infinite activity pure-jump L\'evy subordinators $\left(\Gamma_{b,a}(t)\right)_{t\in\R}$, which are parametrized by two positive real numbers $a$ and $b$, see, e.g., \citep[Example1.3.22]{applebaum2004lpa}. Their moment generating function is given by
\begin{equation*}
u\mapsto\E\ee^{\Gamma_{b,a}(t)u} = \left(1-bu\right)^{-at},\quad a,b>0;
\end{equation*}
the unit increments $\Gamma_{b,a}(n)-\Gamma_{b,a}(n-1)$ follow a Gamma distribution with scale parameter $b$ and shape parameter $a$. This distribution has density
\begin{equation*}
f_{b,a}(x) = \frac{1}{\Gamma(a)b}\left(x/b\right)^{a-1}\ee^{-x/b},
\end{equation*}
mean $ab$ and cumulative distribution function
\begin{equation}
\label{eq-CDFgamma}
F_{b,a}(x) = \int_0^x{f_{b,a}(\xi)\dd\xi} = \frac{\Gamma\left(a;x/b\right)}{\Gamma(a)},
\end{equation}
where $\Gamma(\cdot)$ and $\Gamma(\cdot;\cdot)$ denote the complete and the lower incomplete gamma function, respectively.

In contrast to the example studied in \citep{brockwell2010estimation} we chose to simulate a model of order $(3,1)$ in order to demonstrate the feasibility of approximating the derivatives $\DD^\nu\Y$ which appear in \cref{eq-DefDeltaLhn}. The dynamics of the CARMA process used in the simulations are determined by the polynomials
\begin{equation*}
P(z) = z^3 + 2z^2 + \frac{3}{2}z  + \frac{1}{2},\quad\text{ and }\quad Q(z) = 1 + z,
\end{equation*} 
corresponding to autoregressive roots $\lambda_1 = -1$ and $\lambda_{2,3}=-1\pm\ii$. The process $\Y$ is simulated 
by applying an Euler scheme with step width $5\times10^{-4}$ to the state space model (cf. \cref{theorem-AlternativeSSR})
\begin{equation}
\label{eq-Ysimu}
\dd \X(t) = \left[\begin{array}{ccc}0 & 1 & 0\\0 & 0 & 1\\ -\frac{1}{2} & -\frac{3}{2} & -2 \end{array}\right]\X(t)\dd t + \left[\begin{array}{c}0\\0\\1\end{array}\right]\dd\Gamma_{2,1}(t),\quad\Y(t) = \left[\begin{array}{cc}1&1\end{array}\right]\X(t).
\end{equation}
The initial value $\X(0)$ is set to zero. Another possibility would be to sample $\X(0)$ from the marginal distribution of the stationary solution of \cref{eq-Ysimu}, but since the effect of the choice of $\X(0)$ decays at an exponential rate this does not make a substantial difference. A typical realization of the resulting CARMA process $\Y$ on the time interval $[0,200]$ is depicted in \cref{fig-typrealCARMA31gammaY}. In the case of finite variation L\'evy processes there is a pathwise correspondence between a CARMA process and the driving L\'evy process. Since this applies in particular to Gamma processes, it is possible to show in \cref{fig-typrealCARMA31gammaL} the path of the driving process which generated the shown realization of $\Y$. Such a juxtaposition is useful in that it allows to see how big jumps in the driving process can cause spikes in the resulting CARMA process.

\begin{figure}[!htbp]
\centering
\subfloat[$\Gamma_{2,1}$-process]
{
\includegraphics[width=.45\textwidth]{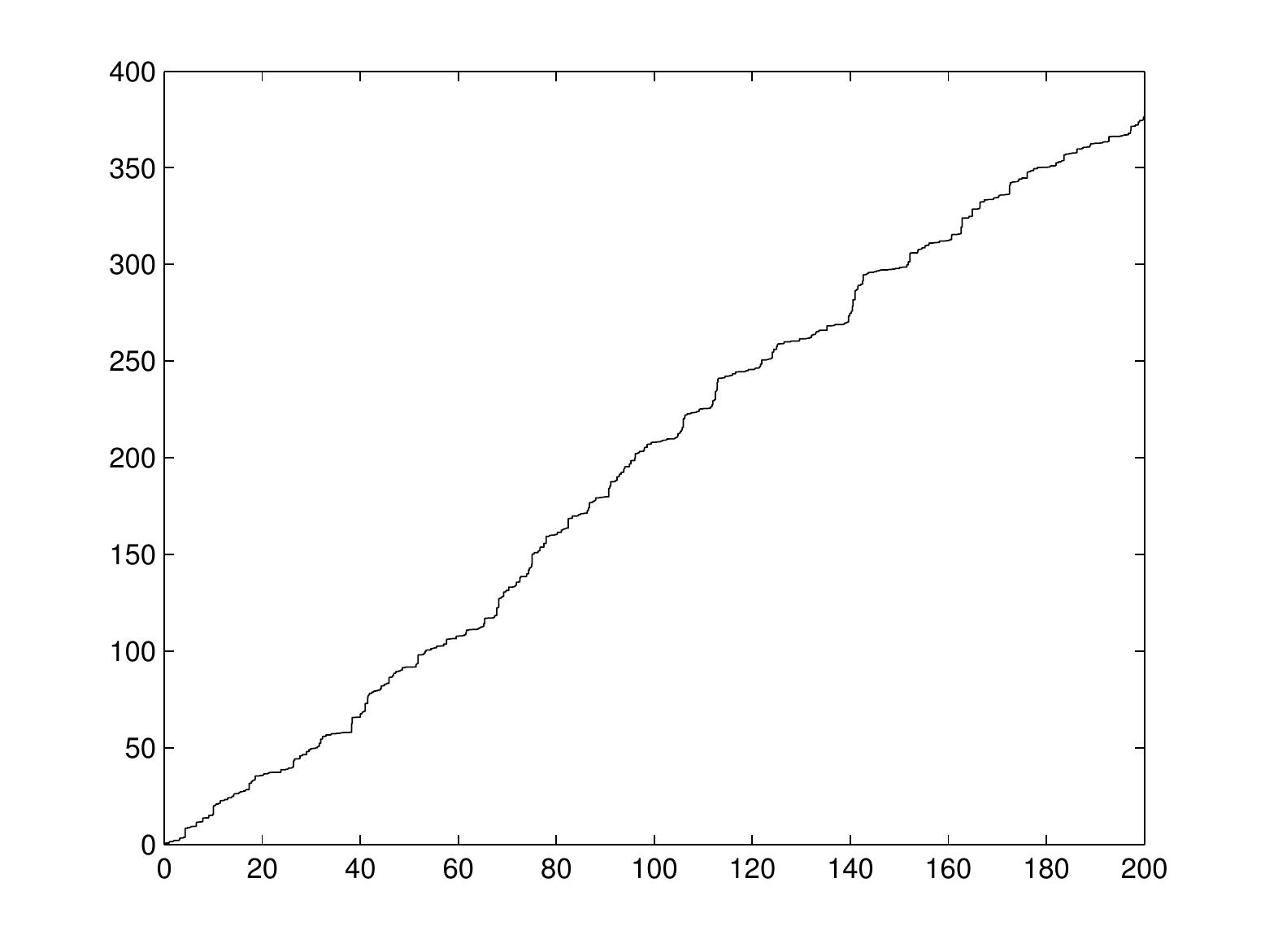}
\label{fig-typrealCARMA31gammaL}
}
\subfloat[$\Gamma_{2,1}$-driven CARMA(3,1) process]
{
\includegraphics[width=.45\textwidth]{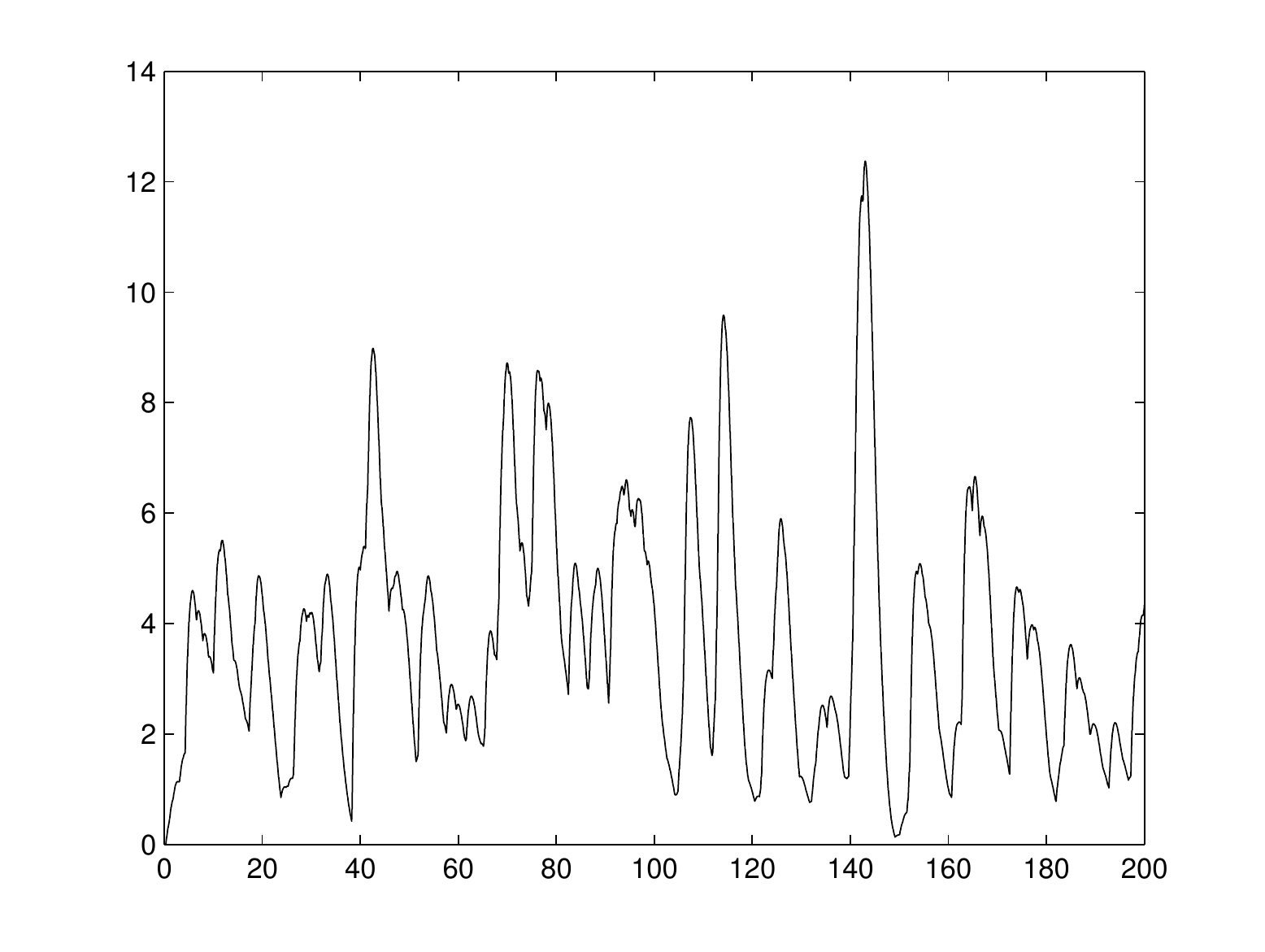}
\label{fig-typrealCARMA31gammaY}
}
\caption{{\small Typical realization of a $\Gamma_{2,1}$-process and the corresponding CARMA(3,1) process with dynamics given by \cref{eq-Ysimu}}
}
\label{fig-typrealCARMA31gamma}
\end{figure}

The first step in the implementation of our estimation procedure is to approximate the increments $\Delta\Gamma_n$ of the driving Gamma process from discrete-time observations of the CARMA process $\Y$. For the value $h=0.01$ of the sampling interval \cref{fig-compareincrements} compares the true increments with the approximations $\widehat{\Delta\Gamma}_n^{(h)}$ obtained from \cref{eq-DefDeltaLhn} both directly and in terms of their cumulative distribution functions. We see that the approximations $\widehat{\Delta\Gamma}_n^{(h)}$ are very good for each individual increment and that therefore the empirical distribution function of the reconstructed increments closely follows the CDF \labelcref{eq-CDFgamma} of the gamma distribution even if the observation period is rather short.

\begin{figure}[!htbp]
\centering
\subfloat[Bar chart of the increments of driving $\Gamma_{2,1}$-process. White bars represent the true increments, black bars indicate the values of the estimates.]
{
\label{fig-compareincrementshisto}
\includegraphics[width=.45\textwidth]{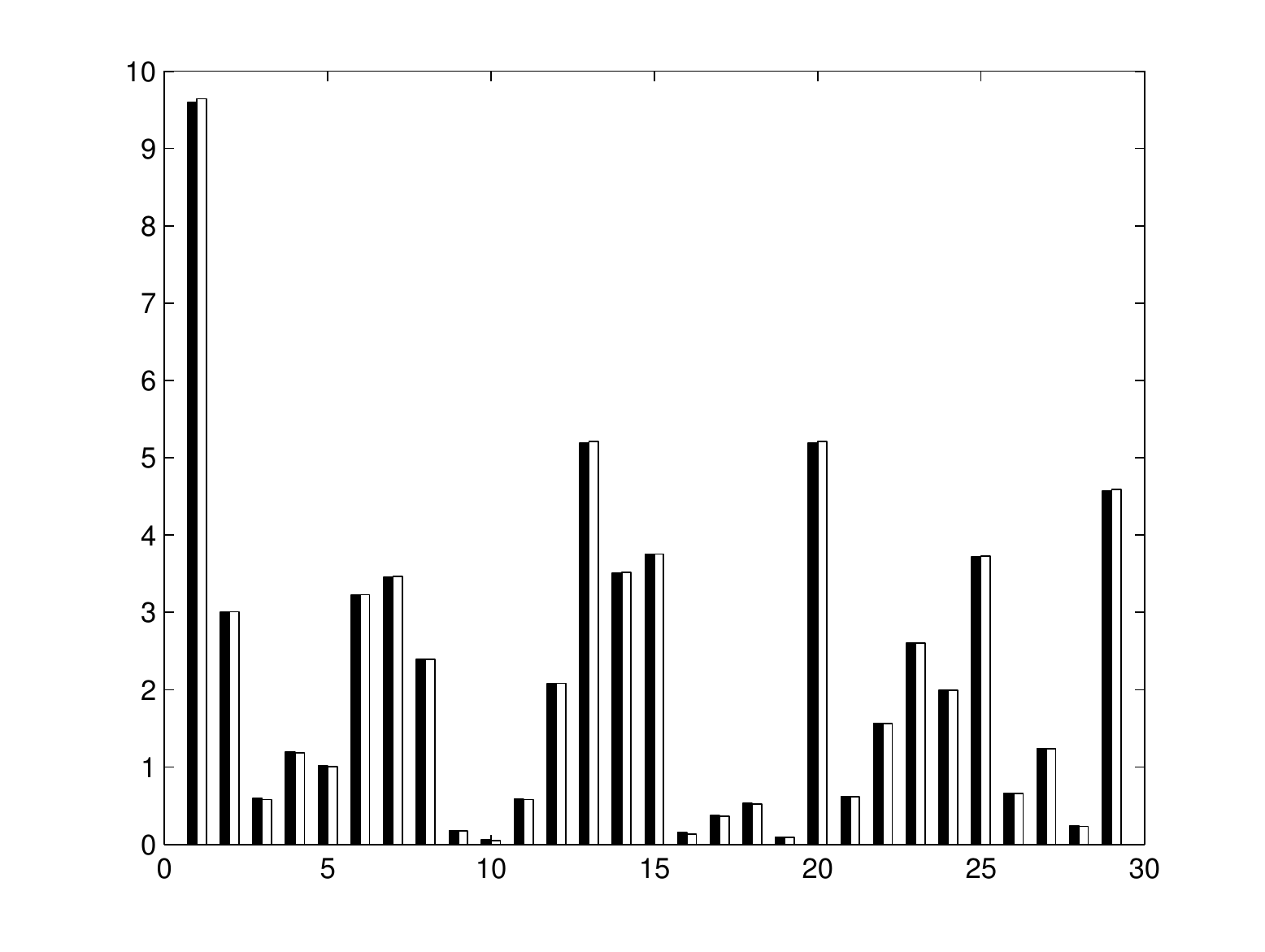}
}
\hspace{.05\textwidth}
\subfloat[Cumulative distribution function of the increments of the driving $\Gamma_{2,1}$-process. The dashed line shows the true CDF given by \cref{eq-CDFgamma}, the solid line represents the empirical distribution function of the estimates.]
{
\label{fig-compareincrementsCFD}
\includegraphics[width=.45\textwidth]{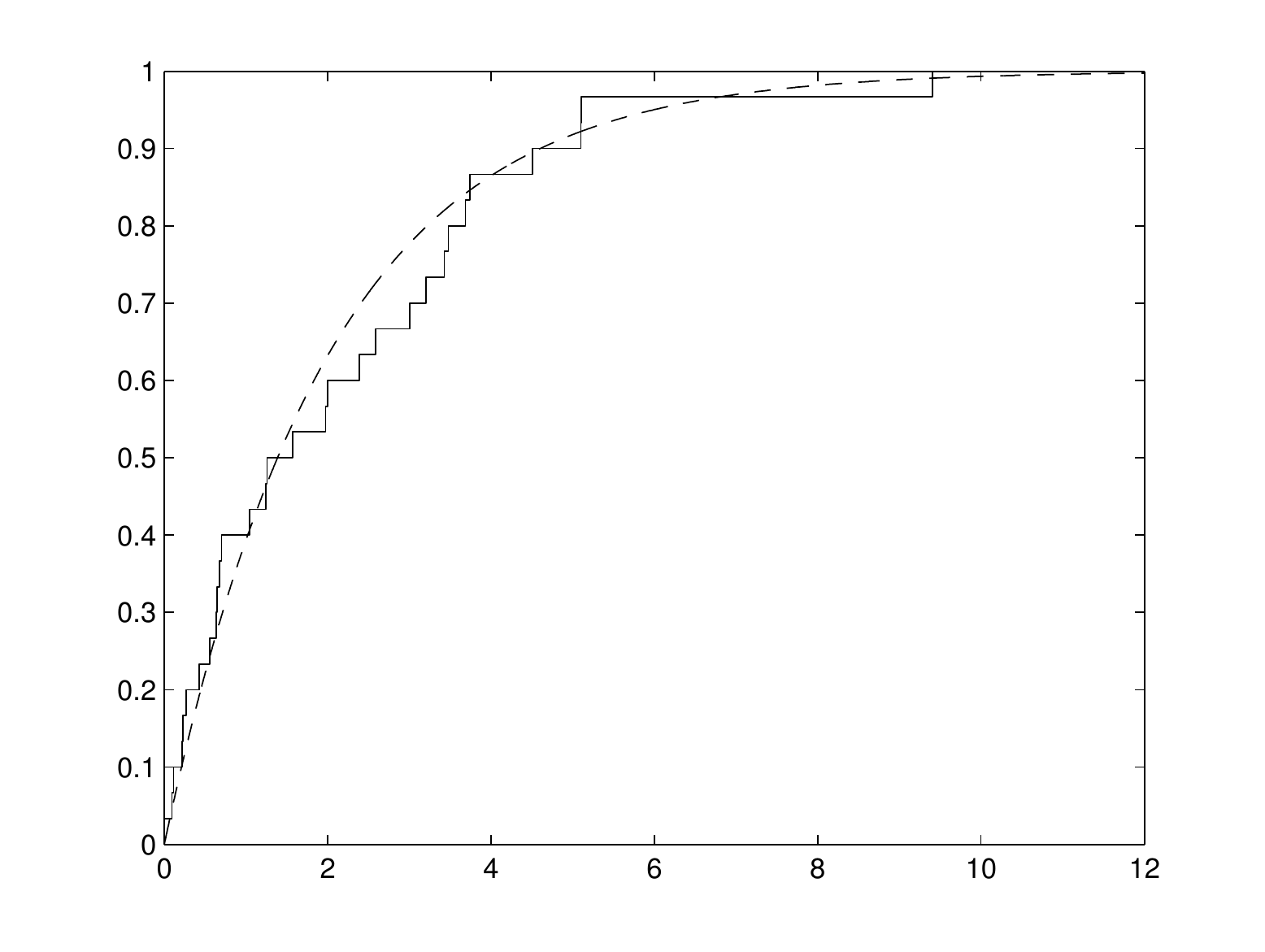}
}
\caption{{\small Comparison of the true increments of a Gamma process with parameters $(b,a)=(2,1)$ to the estimates of the increments computed via \cref{eq-DefDeltaLhn} from discrete observations of the $\Gamma_{2,1}$-driven CARMA(3,1) process defined by \cref{eq-Ysimu} on the time grid $(0,0.01,0.02,\ldots,30)$.}}
\label{fig-compareincrements}
\end{figure}

In the next step we used the approximate increments $\widehat{\Delta\Gamma}_n^{(h)}$ and a standard numerical optimization routine to compute the maximum likelihood estimator
\begin{equation}
\label{eq-MLgamma}
\left(\hat b^{N,(h)},\hat a^{N,(h)}\right) = \argmax_{(a,b)\in \R_+^2}\prod_{n=1}^Nf_{b,a}\left(\widehat{\Delta\Gamma}_n^{(h)}\right),
\end{equation}
or, equivalently,
\begin{equation*}
\left(\hat b^{N,(h)},\hat a^{N,(h)}\right) = \argmin_{(a,b)\in \R_+^2}\left\|\sum_{n=1}^N\nabla_{(b,a)}\log f_{b,a}\left(\widehat{\Delta\Gamma}_n^{(h)}\right)\right\|.
\end{equation*}
In this form, the maximum likelihood estimator falls into the class of generalized moments estimators. From the explicit form of the function $g=\nabla_{(b,a)}\log f_{b,a}$ it is easy to check that the assumptions of \cref{theorem-GMMLevy} are satisfied. Since in the present case, and for maximum likelihood estimators in general, the dimension of the moment vector is equal to the dimension of the parameter space, the choice of the weighting matrices $W_{N,h}$ is irrelevant and the estimator is always best in the sense of \cref{corollary-optimalW}.

With the goal of confirming the assertions of \cref{theorem-GMMLevy} we first focused on consistency and investigated the effect of finite sampling frequencies.  \Cref{fig-crosseconsistencyNh} visualizes the empirical means and marginal standard deviations of the maximum likelihood estimator \labelcref{eq-MLgamma} obtained from $500$ independent realizations of the CARMA process $\Y$ from \cref{eq-Ysimu} simulated over the time horizon $[0,200]$ and sampled at instants $(0,h,2h,\ldots,N)$ for different values of $h$. The picture suggests that the estimator  $\left(\hat b^{N,(h)},\hat a^{N,(h)}\right)$ is biased for positive values of $h$, even as $N$ tends to inifinity, but that it is consistent as $h$ tends to zero. This is in agreement with \cref{theorem-GMMLevy} and reflects the intuition that discrete sampling entails a loss of information compared with a genuinely continuous-time observation of a stochastic process.

\begin{figure}
\centering
\includegraphics[width=.75\textwidth]{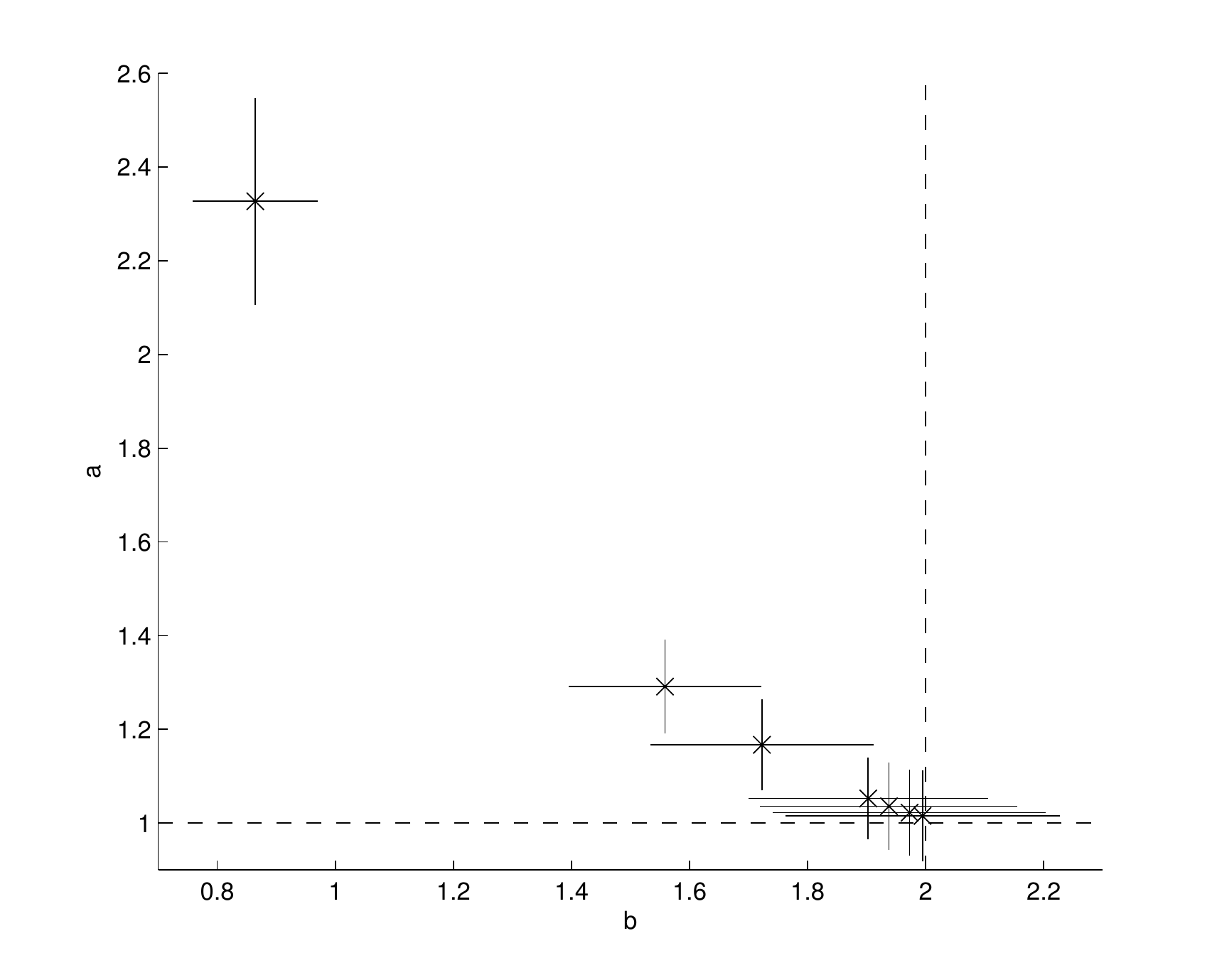}
\caption{{\small Empirical means ($\times$) and standard deviations of the estimators $\left(\hat b^{200,(h)},\hat a^{200,(h)}\right)$ based on $500$ independent observations of the MCARMA process \eqref{eq-Ysimu} on the time grid $(0,h,2h,\ldots,200)$ for $h\in\{0.5, 0.1, 0.05,0.01,0.005,0.001,0.0005\}$. The dashed lines indicate the true parameter value $(b,a)=(2,1)$.}}
\label{fig-crosseconsistencyNh}
\end{figure}

Finally, we conducted another Monte Carlo simulation with the goal of confirming the asymptotic normality of the maximum likelihood estimator \labelcref{eq-MLgamma}. \Cref{fig-CLTconfirm} compares the empirical distribution of the estimator $\left(\hat b^{200,(0.001)},\hat a^{200,(0.001)}\right)$ to the asymptotic normal distribution asserted by the Central Limit Theorem \labelcref{theorem-GMMLevy}. The points indicate the values of the estimates obtained from 500 independent realizations of the CARMA process \labelcref{eq-Ysimu}. The dashed and solid straight lines show the empirical mean $(1.9772,1.0217)$ of the estimates and the true values $(2,1)$ of the parameter $(b,a)$, respectively, which are in good agreement. The dashed and solid ellipses represent the empirical autocovariance matrix $\left(\begin{array}{cc} 4.70 & -1.45 \\ -1.45 & 0.78 \end{array}\right)\times 10^{-2}$ of the estimates and the scaled asymptotic covariance matrix $\Sigma/200\approx\left(\begin{array}{cc}5.11 & -1.55 \\ -1.55 & 0.78 
\end{array}\right)\times 10^{-2}$, respectively. Their closeness, which is also reflected by the similarity of the ellipses in \cref{fig-CLTconfirm}, means that, even for finite observation periods and sampling frequencies, $\Sigma/N$ is a good approximation of the true covariance of the estimator $\left(\hat b^{N,(h)},\hat a^{N,(h)}\right)$ and can thus be used for the construction of confidence regions. For the present example, the asymptotic covariance matrix $\Sigma$, given by \cref{eq-SigmaGMM}, can be computed explicitly as
\begin{equation*}
\Sigma^{-1} = -\E \left[ \nabla^2_{(b,a)}\log f_{b,a}\left(\Gamma_{b,a}(1)\right) \right]_{(b,a)=(2,1)} = \left.\left(\begin{array}{cc}a/b^2 & 1/b\\ 1/b & \psi_1(a)\end{array}\right)\right|_{(b,a)=(2,1)} = \left(\begin{array}{cc}1/4 & 1/2\\ 1/2 & \pi^2/6\end{array}\right),
\end{equation*}
where $\psi_1$ denotes the trigamma function, that is the second derivative of the logarithm of the gamma function. \Cref{fig-CLTconfirm} also compares histograms of $\hat b^{200,(0.001)}$ and $\hat a^{200,(0.001)}$ to the densities of the marginals of the bivariate Gaussian distribution with mean $(2,1)$ and covariance matrix $\Sigma/200$. The agreement is very good, in accordance with the Central Limit Theorem \labelcref{theorem-GMMLevy}.
\begin{figure}
\centering
\includegraphics[width=.75\textwidth]{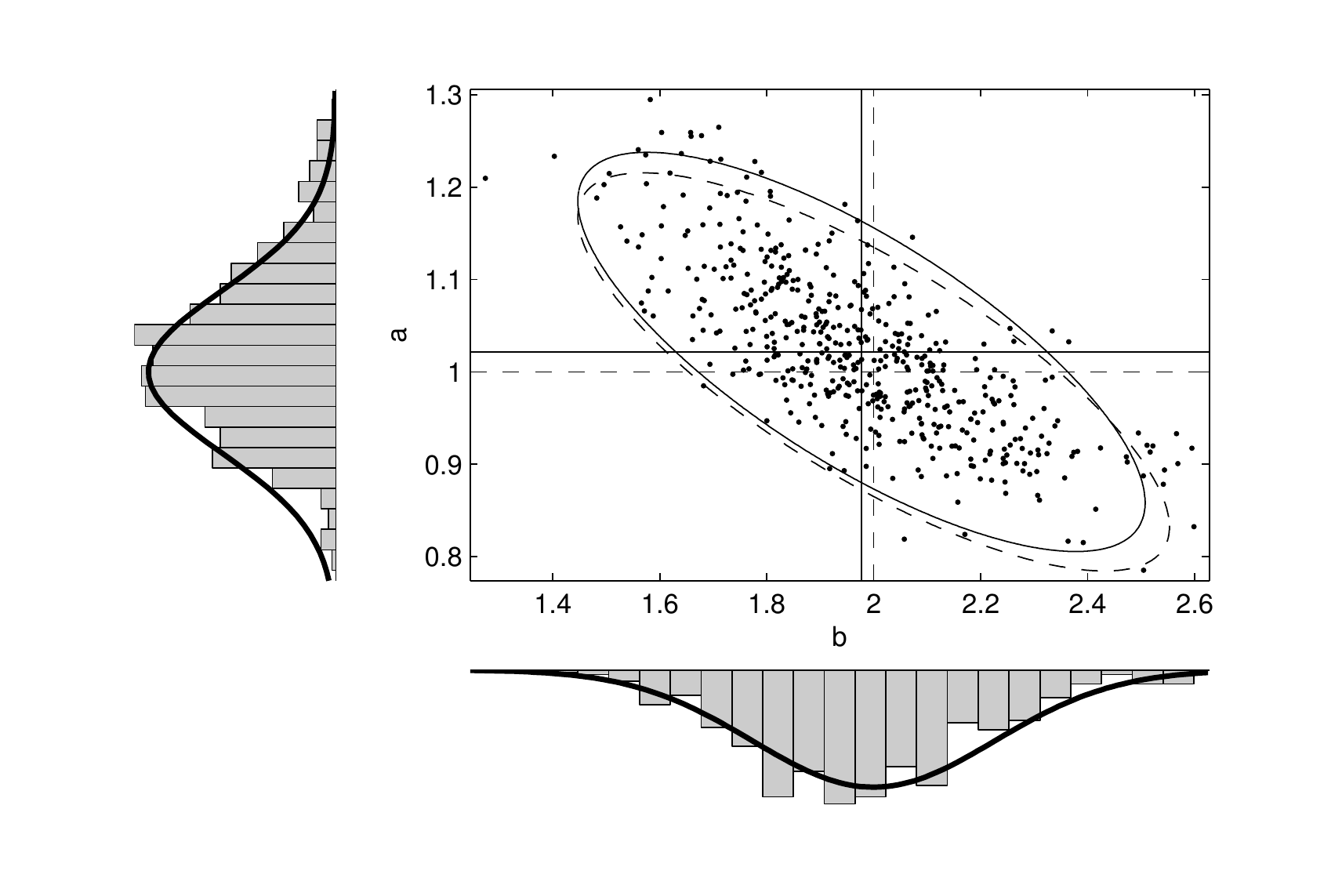}
\caption{{\small Comparison of the empirical distribution of the estimator $\left(\hat b^{200,(0.001)},\hat a^{200,(0.001)}\right)$ based on $500$ realizations of the $\Gamma_{2,1}$-driven CARMA(3,1) process given by \cref{eq-Ysimu} to the asymptotic distribution implied by the Central Limit Theorem \labelcref{theorem-GMMLevy}}}
\label{fig-CLTconfirm}
\end{figure}

\section*{Acknowledgements}
Part of this work was conceived during a visit of ES to the Statistics department of Colorado State University, whose hospitality is gratefully acknowledged. ES also thanks Robert Stelzer for helpful discussion and acknowledges financial support from the International Graduate School of Science and Engineering of the Technische Universit\"at M\"unchen and the TUM Institute for Advanced Study, funded by the German Excellence Initiative. PB gratefully acknowledges the support of National Science Foundation Grant DMS-1107031.

\appendix
\renewcommand*{\thesection}{\Alph{section}}

\section{Auxiliary results}
\label{appendix}
In this appendix we collect some auxiliary results and technical proofs to complement the derivation of the results presented in the main part of the paper.
\subsection{Auxiliary results for \cref{section-MCARMA}}
\begin{lemma}
\label{lemma-propMCARMA}
Assume that $\Lb$ is a L\'evy process and that $\Y$ is an $\Lb$-driven multivariate CARMA process given by the state space representation \labelcref{eq-statespacerepmod} and satisfying \cref{assum-eigA}. Then the following hold.
\begin{enumerate}[(i)]
 \item\label{lemma-propMCARMA-stationary} The process $\Y$ is strictly stationary.
 \item\label{lemma-propMCARMA-smooth} The paths of $\Y$ are $p-q-1$ times differentiable. Moreover, for $j=1,\ldots,p$, the paths of the $j$th $m$-block of the state process $\X$ are $p-j$ times differentiable.
 \item\label{lemma-propMCARMA-moments} For any $k>0$ and any $t,s,\in\R$, finiteness of $\E\left\|\Lb(1)\right\|^k$ implies finiteness of both $\E\left\|\X(t)\right\|^k$ and $\E\left\|\Y(s)\right\|^k$. Conversely, finiteness of the $k$th moment of $\X(t)$ implies finiteness of $\Lb(1)$.
  \item\label{lemma-propMCARMA-mixing} If $\E\left\|\Lb(1)\right\|^k$ is finite for some $k>0$, then the process $\Y$ is strongly mixing with exponentially decaying mixing coefficients.
\end{enumerate}
\end{lemma}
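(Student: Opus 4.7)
The plan is to dispatch the four claims in order, exploiting the moving-average representation \labelcref{eq-Ymovingaverage} and the companion-matrix shape of $\A$ in \labelcref{eq-SSRmatrices}. Strict stationarity in \labelcref{lemma-propMCARMA-stationary} is immediate from $\Y(t)=\int_{-\infty}^{\infty}g(t-u)\dd\Lb(u)$: shifting $t$ by $h$ amounts to translating the integrand in $u$, and since $\Lb$ has stationary and independent increments the law of $\Y$ under such a translation is unchanged, as in the spectral setup of \citep{rajput1989spectral}.

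For smoothness \labelcref{lemma-propMCARMA-smooth} I would read off from the first $p-1$ block rows of \labelcref{eq-statespacerepmodstate} the identities $\DD\X^{(i)}=\X^{(i+1)}$ for $i=1,\ldots,p-1$. The last block $\X^{(p)}$ solves a genuine L\'evy-driven OU-type SDE and is therefore c\`adl\`ag; successive pathwise integration then gives that $\X^{(j)}$ is $p-j$ times differentiable. Since $\Y(t)=\underline{B}\X(t)=\sum_{j=0}^{q}B_j\X^{(j+1)}(t)$ and the least smooth summand corresponds to $\X^{(q+1)}$, the process $\Y$ inherits $p-q-1$ degrees of pathwise differentiability.

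The forward direction of \labelcref{lemma-propMCARMA-moments} is the key analytic step. The random vector $\X(t)=\int_{-\infty}^{t}\ee^{\A(t-u)}E_p\dd\Lb(u)$ is infinitely divisible with triplet given by \labelcref{eq-transformtriplet} applied to the kernel $f(u)=\ee^{\A(t-u)}E_pI_{(-\infty,t]}(u)$. \Cref{assum-eigA} forces $\|\ee^{\A s}E_p\|$ to decay exponentially as $s\to\infty$, so the three integrals in \labelcref{eq-transformtripletgamma,eq-transformtripletsigma,eq-transformtripletnu} converge and satisfy the hypotheses of \cref{lemma-momentsidrv} with constants $c_i,C_i$ controlled by the assumed $k$th moment of $\nuL$ and the exponential decay. \Cref{lemma-momentsidrv} then yields $\E\|\X(t)\|^k<\infty$, and $\E\|\Y(s)\|^k<\infty$ follows since $\Y=\underline{B}\X$. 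For the converse, I would write $\X(t)-\ee^{\A}\X(t-1)=\int_{t-1}^{t}\ee^{\A(t-u)}E_p\dd\Lb(u)$, which inherits a finite $k$th moment from $\X(t)$; because $E_p$ has full column rank and $\ee^{\A s}\to\I$ as $s\to 0$, the kernel $u\mapsto\ee^{\A(t-u)}E_p$ is uniformly bounded below in operator norm for $u\in[t-\delta,t]$ with some $\delta>0$, so \labelcref{eq-transformtripletnu} forces $\int_{\|\bx\|\geq R}\|\bx\|^k\nuL(\dd\bx)<\infty$ for some $R$, whence $\E\|\Lb(1)\|^k<\infty$ by a standard infinitely divisible argument.

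Finally, \labelcref{lemma-propMCARMA-mixing} follows because $\Y$ is a measurable function of the $pm$-dimensional Markov state process $\X$. Under \cref{assum-eigA} and the moment hypothesis, $\X$ is a stable L\'evy-driven Ornstein--Uhlenbeck-type process, and is exponentially strongly mixing by \citep{masuda2004onmultidimensionalOU}; the extension to the CARMA setting is precisely \citep[Theorem 4.3]{schlemmmixing2010}. The mixing then transfers to $\Y$ via the measurable projection, as formalised by \cref{lemma-mixingfunctional}. I expect the main obstacle to be the converse in \labelcref{lemma-propMCARMA-moments}: recovering a tail bound on the scalar-valued measure $\nuL$ from a bound on the pushforward $\nu_f$ supported on an $m$-dimensional, time-smeared subset of $\R^{pm}$ requires carefully exploiting both the full column rank of $E_p$ and the smoothness of $\ee^{\A s}$ near $s=0$, and is where the bulk of the bookkeeping will lie.
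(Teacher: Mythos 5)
Your argument is sound in substance, but it takes a genuinely different route from the paper: the paper disposes of this lemma almost entirely by citation, deducing \labelcref{lemma-propMCARMA-stationary} directly from the state space representation, \labelcref{lemma-propMCARMA-smooth} and \labelcref{lemma-propMCARMA-moments} from \citep[Propositions~3.32 and 3.30]{marquardt2007multivariate} (noting that $E_p$ is injective), and \labelcref{lemma-propMCARMA-mixing} from \citep[Theorem~4.3]{masuda2004onmultidimensionalOU}. You instead reconstruct the arguments from scratch: the translation invariance of the moving-average integral for stationarity, the companion structure $\DD\X^{(i)}=\X^{(i+1)}$ for the smoothness count, the triplet transformation formulas \labelcref{eq-transformtriplet} together with \cref{lemma-momentsidrv} for the moments, and \cref{lemma-mixingfunctional} for the transfer of mixing from $\X$ to $\Y$. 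This buys a self-contained proof (and, for the converse in \labelcref{lemma-propMCARMA-moments}, your lower bound on the kernel $u\mapsto\ee^{\A(t-u)}E_p$ via the full column rank of $E_p$ is exactly the point hidden behind the injectivity remark in the paper), at the cost of redoing work that the cited references already contain.

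One small but genuine gap: the statement of \labelcref{lemma-propMCARMA-moments} is for \emph{any} $k>0$, whereas \cref{lemma-momentsidrv} is stated only for positive even integers $k$, so your forward implication as written covers only that case; you cannot pass to $(k)_0$ because only the $k$th moment of $\Lb(1)$ is assumed finite. The fix is to replace the quantitative bound by the qualitative moment criterion for infinitely divisible distributions (finiteness of $\E\,g(X)$ for a submultiplicative $g$ such as $\bx\mapsto\max\{1,\left\|\bx\right\|\}^k$ is equivalent to finiteness of $\int_{\left\|\bx\right\|\geq 1}g(\bx)\,\nu(\dd\bx)$), applied to the pushforward measure \labelcref{eq-transformtripletnu}; combined with the boundedness and exponential decay of the kernel guaranteed by \cref{assum-eigA}, this gives both directions for arbitrary $k>0$ and is in effect what the cited proposition of \citep{marquardt2007multivariate} does.
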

\begin{proof}
The first claim is an immediate consequence of the state space representation \labelcref{eq-statespacerepmod}. Parts \labelcref{lemma-propMCARMA-smooth,lemma-propMCARMA-moments} follow from \citep[Propositions 3.32 and 3.30]{marquardt2007multivariate}, respectively, if we observe that $E_p$ is injective. The assertion \labelcref{lemma-propMCARMA-mixing} follows from \citep[Theorem 4.3]{masuda2004onmultidimensionalOU}, see also the proof of \citep[Proposition 3.34]{marquardt2007multivariate}.
\end{proof}

The following lemma relates strong mixing of a continuous-time process to strong mixing of functionals of the process. It is used in the proof of \cref{prop-derivMCARMA}.
\begin{lemma}
\label{lemma-mixingfunctional}
Let $X=(X_t)_{t\in\R}$ be an $\R^d$-valued (exponentially) strongly mixing stochastic process. If, for each $n\in\Z$, the random variable $Y_n$ is measurable with respect to $\sigma(X_j:n-1\leq j\leq n)$ then the stochastic process $(Y_n)_{n\in\Z}$ is (exponentially) strongly mixing. In particular, if $f:\R^{d\times [0,1]}\to\R^m$ is a measurable function, then the $\R^m$-valued stochastic process $\left(f((X_{n-1+t})_{t\in[0,1]})\right)_{n\in\Z}$ is (exponentially) strongly mixing.
\end{lemma}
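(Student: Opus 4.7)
The plan is to reduce the mixing behaviour of $(Y_n)_{n\in\Z}$ to that of the underlying process $X$ through straightforward $\sigma$-algebra inclusions. The starting point is the measurability assumption, which implies that the block of past $Y$'s is generated by the past of $X$ (with only a small overlap), and symmetrically for the future, so that a strong-mixing event of $Y$ with lag $m$ corresponds to a strong-mixing event of $X$ with lag essentially $m-1$.

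Concretely, I would first observe that since $Y_n$ is $\sigma(X_t: n-1\leq t\leq n)$-measurable, one has
\[
\mathscr{Y}_{-\infty}^0=\sigma(Y_j:j\leq -1)\subseteq \sigma(X_t:t\leq -1)\subseteq \sigma(X_t:t<0)=\mathscr{F}_{-\infty}^0,
\]
\[
\mathscr{Y}_m^\infty=\sigma(Y_j:j\geq m+1)\subseteq \sigma(X_t:t\geq m)\subseteq \sigma(X_t:t>m-1)=\mathscr{F}_{m-1}^\infty,
\]
where I use the conventions from \cref{eq-Defalpha}. Consequently, for every $A\in\mathscr{Y}_{-\infty}^0$ and $B\in\mathscr{Y}_m^\infty$ the difference $|\Pb(A\cap B)-\Pb(A)\Pb(B)|$ is one of the quantities taken in the supremum defining $\alpha_X(m-1)$; taking the supremum over $A,B$ therefore yields $\alpha_Y(m)\leq \alpha_X(m-1)$ for every $m\geq 1$.

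The conclusions then follow immediately from the monotonicity of $\alpha_X$: strong mixing of $X$ gives $\alpha_Y(m)\to 0$, and an exponential bound $\alpha_X(m)=O(\lambda^m)$ for some $\lambda\in(0,1)$ transfers to $\alpha_Y(m)=O(\lambda^{m-1})=O(\lambda^m)$. For the \emph{in particular} statement, note that $Y_n\coloneqq f((X_{n-1+t})_{t\in[0,1]})$ is, as the composition of the measurable map $f$ with the canonical coordinate projections, measurable with respect to the product $\sigma$-algebra generated by the path fragment $(X_t)_{t\in[n-1,n]}$, hence satisfies the hypothesis of the first part.

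I do not anticipate any genuine obstacles here; the proof is essentially bookkeeping on $\sigma$-algebras. The only subtlety worth handling carefully is the mismatch between the strict inequalities appearing in the definition of $\mathscr{F}_n^m$ and the closed intervals naturally produced by the measurability assumption on $Y_n$, which is resolved by the elementary inclusions $\sigma(X_t:t\leq a)\subseteq\sigma(X_t:t<a+\varepsilon)$ used above.
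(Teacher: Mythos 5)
Your proof is correct and is exactly the argument the paper has in mind — its own proof consists of the single sentence that the claim follows immediately from the definition \labelcref{eq-Defalpha} of the strong mixing coefficients, and your $\sigma$-algebra inclusions yielding $\alpha_Y(m)\leq\alpha_X(m-1)$ are the bookkeeping that sentence leaves implicit.
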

\begin{proof}
This follows immediately from \cref{eq-Defalpha}, the definition of the strong mixing coefficients.
\end{proof}

\subsection{Auxiliary results for \cref{section-DTestimation}}
The following lemma collects some useful properties of forward differences; in particular it shows that if a function $f$ is sufficiently smooth, then the derivative $\DD^{\nu}f(t)$ is well approximated by $\Delta_h^\nu[f](t)$.
\begin{lemma}
\label{lemma-propforwarddifference}
For $h>0$ and a positive integer $\nu$ let the forward-differences $\Delta_h^\nu[f](t)$ be defined by \cref{eq-forwarddifference}. The following properties hold:
\begin{enumerate}[i)]
 \item\label{lemma-propforwarddifference-recursive} For every positive integer $k<\nu$ and every function $f$, one has $\Delta_h^\nu[f]=\Delta_h^k\left[\Delta_h^{\nu-k}[f](\cdot)\right]$.
 \item\label{lemma-propforwarddifference-smooth} If the function $f:\R\to\R^m$ is $\nu+1$ times continuously differentiable on the interval $[t,t+\nu h]$ then there exist $t_i^*\in[t,t+\nu h]$, $i=1,\ldots,m$, such that
\begin{equation}
\label{eq-derivativeproxysmooth}
\Delta_h^\nu[f](t)=\DD^\nu f(t)-\frac{h}{2}\DD^{\nu+1}f(\underline t^*),
\end{equation}
where $\DD^{\nu+1}f(\underline t^*)$ is the vector whose $i$th component equals the $i$th component of $\DD^{\nu+1}f(t_i^*)$. In particular, for every polynomial $\mathfrak{p}$ of degree at most $\nu$, one has $\Delta_h^\nu[\mathfrak{p}]=\DD^\nu\mathfrak{p}$.
 \item\label{lemma-propforwarddifference-nonsmooth} If the $(\nu+1)$th derivative of $f$ is not assumed to be continuous it holds that
\begin{equation}
\label{eq-derivativeproxysmooth2}
\left\|\Delta_h^\nu[f](t)-\DD^\nu f(t)\right\|\leq h\sup_{s\in[t,t+\nu h]}\left\|\DD^{\nu+1}f(s)\right\|.
\end{equation}
\end{enumerate}
\end{lemma}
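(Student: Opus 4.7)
The plan is to obtain part (i) first by an elementary manipulation of the binomial sums, and then to derive parts (ii) and (iii) from a single integral representation of $\Delta_h^\nu[f]$.

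For (i), I would proceed by induction on $\nu$, first establishing the one-step composition $\Delta_h^\nu[f]=\Delta_h^1\bigl[\Delta_h^{\nu-1}[f](\cdot)\bigr]$. Expanding both sides using the defining formula \labelcref{eq-forwarddifference}, reindexing by $j=i+1$ in the forward-shifted sum, and combining the coefficients of $f(t+jh)$ via Pascal's identity $\binom{\nu-1}{j-1}+\binom{\nu-1}{j}=\binom{\nu}{j}$ recovers $\Delta_h^\nu[f](t)$ exactly. A second induction on $k$ then yields the general associativity $\Delta_h^\nu=\Delta_h^k\circ\Delta_h^{\nu-k}$ for every $1\leq k<\nu$.

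The key ingredient for (ii) and (iii) is the integral representation
\begin{equation*}
\Delta_h^\nu[f](t)=\frac{1}{h^\nu}\int_0^h\!\!\cdots\!\int_0^h f^{(\nu)}(t+u_1+\cdots+u_\nu)\,\dd u_1\cdots \dd u_\nu,
\end{equation*}
valid whenever $f$ is $\nu$ times differentiable on $[t,t+\nu h]$. I would prove this by induction on $\nu$: the case $\nu=1$ is the fundamental theorem of calculus in the form $f(t+h)-f(t)=\int_0^h f'(t+u)\,\dd u$, and the induction step applies (i) together with the one-step identity $h\,\Delta_h^1[F](s)=\int_0^h F'(s+u)\,\dd u$ to $F=\Delta_h^{\nu-1}[f]$, collecting the $\nu$-fold integral via Fubini.

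Given this representation, part (ii) follows by substituting $f^{(\nu)}(t+s)=f^{(\nu)}(t)+\int_0^s f^{(\nu+1)}(t+r)\,\dd r$ under the integrand: the constant term yields $\DD^\nu f(t)$, while the remainder is $h^{-\nu}$ times an integral of $f^{(\nu+1)}$ weighted by the continuous nonnegative factor $u_1+\cdots+u_\nu$. Continuity of $f^{(\nu+1)}$ on the compact interval $[t,t+\nu h]$ then lets me invoke the integral mean value theorem componentwise to produce the points $t_i^*\in[t,t+\nu h]$ at which the integral collapses to a single evaluation of $\DD^{\nu+1}f$, multiplied by the elementary constant $h^{-\nu}\int_{[0,h]^\nu}(u_1+\cdots+u_\nu)\,\dd u$; the polynomial corollary then drops out because $\DD^{\nu+1}\mathfrak p\equiv 0$ whenever $\deg\mathfrak p\leq\nu$. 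For part (iii) no continuity of $f^{(\nu+1)}$ is needed: the same representation together with $\bigl\|\int_0^s f^{(\nu+1)}(t+r)\,\dd r\bigr\|\leq s\sup_{r\in[0,\nu h]}\|f^{(\nu+1)}(t+r)\|$ and the same elementary evaluation of $\int_{[0,h]^\nu}(u_1+\cdots+u_\nu)\,\dd u$ yields the claimed sup-norm bound. I do not expect any substantive obstacle; the only technical point is the induction establishing the integral representation, after which both (ii) and (iii) become one-line consequences of the fundamental theorem of calculus and the integral mean value theorem.
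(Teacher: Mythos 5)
Your route is genuinely different from the paper's: the paper disposes of the whole lemma in one line, declaring (i) immediate from the definition and deriving (ii) and (iii) from a component-wise application of Taylor's theorem with Lagrange remainder. Your integral representation $\Delta_h^\nu[f](t)=h^{-\nu}\int_{[0,h]^\nu}f^{(\nu)}(t+u_1+\cdots+u_\nu)\,\dd u_1\cdots\dd u_\nu$, proved by induction using (i) and the fundamental theorem of calculus, is a clean self-contained alternative; it avoids the bookkeeping with the identities $\sum_i(-1)^{\nu-i}\binom{\nu}{i}i^j=0$ for $j<\nu$ that a direct Taylor expansion of each $f(t+ih)$ would require, and it isolates the error as an integral of $\DD^{\nu+1}f$ against an explicit nonnegative weight, which is exactly what the mean value theorem for integrals needs in part (ii).

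There is, however, one quantitative slip at the very end. Evaluating your weight gives $h^{-\nu}\int_{[0,h]^\nu}(u_1+\cdots+u_\nu)\,\dd u=\nu h/2$, so your argument produces $\Delta_h^\nu[f](t)=\DD^\nu f(t)+\tfrac{\nu h}{2}\DD^{\nu+1}f(\underline t^*)$ and the bound $\left\|\Delta_h^\nu[f](t)-\DD^\nu f(t)\right\|\leq \tfrac{\nu h}{2}\sup_{s}\left\|\DD^{\nu+1}f(s)\right\|$, not the printed constants $\tfrac{h}{2}$ and $h$. Your claim that the ``same elementary evaluation yields the claimed sup-norm bound'' therefore does not go through for $\nu\geq 3$, and no extra cancellation can rescue it: for $f(x)=x^{\nu+1}$ one computes $\Delta_h^\nu[f](t)-\DD^\nu f(t)=(\nu+1)!\,\nu h/2$ exactly, so $\nu h/2$ is sharp and the lemma's stated constant (and the sign in (ii)) is simply a misprint. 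You should state the bound you actually prove, with the factor $\nu/2$; this is harmless for everything downstream, since the paper only ever uses that the error is $O(h)$ with a constant independent of $t$ and of the process.
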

\begin{proof}
Property \labelcref{lemma-propforwarddifference-recursive} is immediate from the definition \labelcref{eq-forwarddifference}. The assertions of \labelcref{lemma-propforwarddifference-smooth,lemma-propforwarddifference-nonsmooth} follow from a component-wise application of Taylor's theorem (\citep[Theorem 5.19]{apostol1974mathematical}).
\end{proof}

In the next lemma we will show that the supremum of an Ornstein--Uhlenbeck-type process has finite absolute $k$th moments if and only if the driving L\'evy process has finite $k$th moments. This will allow us to effectively employ the error bound \labelcref{eq-derivativeproxysmooth2} for multivariate CARMA processes.
\begin{lemma}
\label{lemma-momentssup}
Let $\left(\Lb(t)\right)_{t\geq 0}$ be an $m$-variate L\'evy process and let $A\in M_N(\R)$, $B\in M_{N,m}(\R)$ be given coefficient matrices. Assume that all eigenvalues of $A$ have strictly negative real parts and that $\X=\left(\X(t)\right)_{t\geq 0}$ is the unique stationary solution of the stochastic differential equation
\begin{equation}
\label{eq-LevyOUDoobMeyer}
\dd\X(t) = A\X(t)\dd t+B\dd\Lb(t).
\end{equation}
Further denote by
\begin{equation}
\X^*(t)=\sup_{0\leq s\leq t}\left\|\X(s)\right\|
\end{equation}
the supremum of $\left\|\X\right\|$ on the compact interval $[0,t]$. It then holds that, for every $t\in\R$ and every $k>0$, the $k$th moment $\E\left(\X^*(t)\right)^k$ is finite if and only if $\E\left\|\Lb(1)\right\|^k$ is finite.
\end{lemma}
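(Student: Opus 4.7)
The plan is to exploit the explicit variation-of-constants representation
\[
\X(s)=\ee^{As}\X(0)+\int_0^s\ee^{A(s-u)}B\,\dd\Lb(u),\qquad s\ge 0,
\]
together with the fact that, by assumption on the spectrum of $A$, there exist constants $C,\alpha>0$ such that $\|\ee^{At}\|\le C\ee^{-\alpha t}$ for all $t\ge 0$.

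For the implication $\E\|\Lb(1)\|^k<\infty\Rightarrow\E(\X^*(t))^k<\infty$, I would first note that finiteness of $\E\|\Lb(1)\|^k$ implies $\E\|\X(0)\|^k<\infty$; this is essentially the argument of Masuda \citep{masuda2004onmultidimensionalOU}, and also follows from a direct application of \cref{lemma-momentsidrv} to the infinitely divisible law of the stationary distribution, whose characteristic triplet is obtained from \cref{eq-transformtriplet} with $f(s)=\ee^{-As}BI_{[0,\infty)}(s)$. Next, since $\Lb$ is càdlàg, stochastic integration by parts yields the pathwise identity
\[
\int_0^s \ee^{A(s-u)}B\,\dd\Lb(u)=B\Lb(s)+A\int_0^s\ee^{A(s-u)}B\Lb(u)\,\dd u.
\]
Combining this with the exponential bound on $\|\ee^{At}\|$ and taking suprema gives
\[
\X^*(t)\le C\|\X(0)\|+C'\sup_{0\le s\le t}\|\Lb(s)\|
\]
for some deterministic constant $C'$ (the bound of $\int_0^s \|\ee^{A(s-u)}\|\dd u$ is uniformly $\le C/\alpha$). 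The claim then follows from the standard fact that, for a L\'evy process, $\E\sup_{0\le s\le t}\|\Lb(s)\|^k$ is finite whenever $\E\|\Lb(t)\|^k$ is, which can either be invoked from \citep[Theorem 25.18]{sato1991lpa} or derived via Doob's $L^p$ inequality applied separately to each component of the L\'evy--It\^o decomposition of $\Lb$ (Brownian part, compound Poisson large-jump part, and compensated small-jump martingale).

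For the reverse implication, the key observation is that in the applications of this lemma within the paper (and more generally when $B$ is injective, which holds for $B=E_p$ or $B=E_q$ by inspection of \cref{eq-SSRmatrices,eq-OUXqcoeff}), the state equation \labelcref{eq-LevyOUDoobMeyer} can be inverted. Integrating and multiplying by a left inverse $B^{\sim 1}$ of $B$ gives
\[
\Lb(t)=B^{\sim1}\bigl[\X(t)-\X(0)\bigr]-B^{\sim1}A\int_0^t\X(s)\,\dd s,
\]
so that $\|\Lb(t)\|\le\bigl(2\|B^{\sim1}\|+t\|B^{\sim1}A\|\bigr)\X^*(t)$. Taking the $k$th power and expectations finishes the argument, since finiteness of $\E\|\Lb(t)\|^k$ for any single $t>0$ is equivalent to finiteness of $\E\|\Lb(1)\|^k$ by \cref{prop-levymoments} (or more simply by the stationarity and independence of increments).

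The main obstacle is the pathwise integration-by-parts identity: strictly speaking it has to be justified using the left-continuity/right-continuity conventions for the stochastic integral, but it is standard and follows from \citep[Chapter II]{protter1990sia}. Once that is in place, the two bounds reduce everything to the well-known moment equivalence between $\sup_{0\le s\le t}\|\Lb(s)\|$ and $\Lb(t)$ for L\'evy processes.
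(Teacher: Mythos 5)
Your proof is correct in substance but takes a genuinely different route from the paper's for the main implication. To show $\E\left\|\Lb(1)\right\|^k<\infty\Rightarrow\E\left(\X^*(t)\right)^k<\infty$, the paper works component-wise with the canonical semimartingale decomposition $X^i=V_i+M_i$ read off from the state equation and invokes the maximal inequality $\E\left(X_i^*(t)\right)^k\leq c_k\left\|X^i\right\|_{\mathscr{H}^k_t}$ of \citep[Theorem V.2]{protter1990sia}; the finite-variation part is then controlled via the stationarity of $\X$, and the martingale part by computing $[M_i,M_i]_t$ explicitly through the jump measure of $\Lb$ and applying \cref{lemma-momentsidrv} to the infinitely divisible random variable $\int_0^t\int\left\|\bx\right\|^2N(\dd s,\dd\bx)$. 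You instead integrate by parts to trade the stochastic integral for a pathwise Lebesgue integral plus the term $B\Lb(s)$, which reduces everything to the finiteness of $\E\sup_{0\leq s\leq t}\left\|\Lb(s)\right\|^k$, i.e.\ to \citep[Theorem 25.18]{sato1991lpa}. Your route is shorter and avoids the semimartingale machinery entirely (the integration-by-parts identity is indeed legitimate pathwise for a $C^1$ deterministic integrand against a c\`adl\`ag integrator); what the paper's route buys is self-containedness, since it reuses its own moment bound \cref{lemma-momentsidrv} rather than an external maximal-moment theorem. Both arguments cover all $k>0$.

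One caveat: your converse implication genuinely requires $B$ to be injective, and the lemma as stated does not assume this (for $B=0$ the unique stationary solution is $\X\equiv 0$, so the ``only if'' direction fails as literally stated). You flag this honestly, and the paper is in essentially the same position: its one-line converse cites \cref{lemma-propMCARMA}, whose proof rests on the injectivity of $E_p$ in the specific representation \labelcref{eq-statespacerepmod}. Since the lemma is only ever applied with $B=E_p$ or with matrices of that injective block form, the restriction affects no use of the result, but it should be stated explicitly rather than leaving the fully general claim unproved.
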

\begin{proof}
If $\E\left\|\Lb(1)\right\|^k$ is infinite it follows from \cref{lemma-propMCARMA},\labelcref{lemma-propMCARMA-moments} that $\E\left\|\X(t)\right\|^k$ is infinite as well for every $t\in\R$ and that therefore $\E\left(\X^*(t)\right)^k$ must be infinite. The other implication requires more work.

We first note that $\X^*(t)\leq\sum_{i=1}^NX_i^*(t)$ where $X_i^*(t)=\sup_{0\leq s\leq t}\left|X^i(s)\right|$ is the supremum of the $i$th component of $\X$ over the interval $[0,t]$. Since each $X^i$ is a semi-martingale, \citep[Theorem V.2]{protter1990sia} shows that there exists a universal constant $c_k$ such that $\E\left(X_i^*(t)\right)^k\leq c_k\left\|X^i\right\|_{\mathscr{H}^k_t}$, where the norm $\left\|\cdot\right\|_{\mathscr{H}^k_t}$ is defined by
\begin{equation*}
 \left\|X^i\right\|_{\mathscr{H}^k_t} = \inf_{X^i=\widetilde V_i+\widetilde M_i}{\E\left(\int_0^t{\left|\dd \widetilde V_i(s)\right|}+[\widetilde M_i,\widetilde M_i]_t^{1/2}\right)^k}.
\end{equation*}
Here, the infimum is taken over all decompositions of $X^i$ into a local martingale $\widetilde M_i$ and an adapted, c\`adl\`ag process $\widetilde V_i$ with finite variation, and $[\cdot,\cdot]$ denotes the quadratic variation process. In our situation, \cref{eq-LevyOUDoobMeyer} defines a canonical decomposition of $X^i$, $i=1,\ldots,N$, into the finite variation process $V_i=\left(V_i(t)\right)_{t\geq 0}$ given by
\begin{equation*}
V_i(t) = \be_i^T\left[\X(0)+\int_0^t A\X(s)\dd s+tB\E\Lb(1)\right],
\end{equation*}
where $\be_i$ denotes the $i$th unit vector in $\R^N$, and the martingale $M_i=\left(M_i(t)\right)_{t\geq 0}$ given by
\begin{equation*}
M_i(t)=\be_i^TB\left[\Lb(t)-t\E\Lb(1)\right].  
\end{equation*}
Since clearly,
\begin{align*}
\left(\X^*(t)\right)^k = \sup_{0\leq s\leq t}\left(X^1(s)^2+\ldots+X^N(s)^2\right)^{k/2}\leq& \left(X_1^*(t)^2+\ldots+X_N^*(t)^2\right)^{k/2}\\
  \leq & N^{k/2}\max_{1\leq i\leq N}{X_i^*(t)^k}\leq N^{k/2}\sum_{i=1}^N{X_i^*(t)^k},
\end{align*}
it suffices to bound the $k$th moments of $X_i^*(t)$ in order to obtain a bound for the $k$th moment of $\X^*(t)$. The former can be estimated as
\begin{equation}
\label{eq-EXikdecomp}
\E\left(X_i^*(t)\right)^k\leq c_k\left\|X^i\right\|_{\mathscr{H}^k_t} \leq c_k\left[\E\left(\int_0^t{\left|\dd V_i(s)\right|}\right)^k+\E[M_i,M_i]_t^{k/2}\right].
\end{equation}
The first term in this expression is seen to satisfy
\begin{align*}
\E\left(\int_0^t{\left|\dd V_i(s)\right|}\right)^k &\leq \E\left(\int_0^t{\left|\be_i^TA\X(s)\right|\dd s}+t\left|\be_i^TB\Lb(1)\right|\right)^k\\
  &\leq 2^k\left[\left\|A\right\|^k\int_0^t\E\left\|\X(s)\right\|^k\dd s+t^k\left\|B\right\|^k\E\left\|\Lb(1)\right\|^k\right]<\infty,
\end{align*}
where the finiteness of the integral $\int_0^t\E\left\|\X(s)\right\|^k\dd s$ follows from the assumption that $\E\left\|\X(s)\right\|^k$ is finite and the strict stationarity of $\X$. For the second term in \cref{eq-EXikdecomp} one obtains the bound
\begin{align*}
\E[M_i,M_i]_t^{k/2} & = \E\left(\be_i^TB[\Lb,\Lb]_tB^T\be_i\right)^{k/2}\\
  &\leq \left\|B\right\|^k\E\left\|[\Lb,\Lb]_t\right\|^{k/2}  \leq 2^k\left\|B\right\|^k\left\{\left\|\Sigma^{\mathcal{G}}\right\|^{k/2}t^{k/2} + \E\left\|\int_0^t\int_{\R^m} \bx\bx^T N(\dd s,\dd\bx)\right\|^{k/2}\right\},
\end{align*}
where we have used \citep[Theorem I.4.52]{jacod2003limit} to compute the quadratic variation of the L\'evy process $\Lb$ with characteristic triplet $(\bgammaL,\Sigma^{\mathcal{G}},\nuL)$. To see that this expression is finite we observe that
\begin{align*}
\E\left\|\int_0^t\int_{\R^m} \bx\bx^T N(\dd s,\dd\bx)\right\|^{k/2} \leq& m^{k/2}\E\left(\int_0^t\int_{\R^m} \left\|\bx\right\|^2 N(\dd s,\dd\bx)\right)^{k/2}\\
  =& m^{k/2}\E\left(\lim_{\epsilon\to 0}\int_0^t\int_{\left\|\bx\right\|\geq\epsilon} \left\|\bx\right\|^2 N(\dd s,\dd\bx)\right)^{k/2}\\
  =& m^{k/2}\lim_{\epsilon\to 0}\E\left(\int_0^t\int_{\left\|\bx\right\|\geq\epsilon} \left\|\bx\right\|^2 N(\dd s,\dd\bx)\right)^{k/2}\eqqcolon m^{k/2} \lim_{\epsilon\to 0}\E Y_\epsilon^{k/2},
 \end{align*}
where we have applied the Monotone Convergence Theorem (\cite[Theorem 4.20]{klenke2008probability}) to interchange the order of expectation and passing to the limit. By \citep[Proposition 19.5]{sato1991lpa}, for each $\epsilon>0$, the random variable $Y_\epsilon=\int_0^t\int_{\left\|\bx\right\|\geq\epsilon} \left\|\bx\right\|^2 N(\dd s,\dd\bx)$ is infinitely divisible with characteristic measure $\rho_\epsilon = (\lambda|_{[0,t]}\otimes\nuL|_{\{\left\|\bx\right\|\geq\epsilon\}})\phi_\epsilon^{-1}$, where $\phi_\epsilon:[0,t]\times\{\left\|\bx\right\|\geq\epsilon\}\to\R^+$ maps $(s,\bx)$ to $\left\|\bx\right\|^2$, and with characteristic drift $\bgamma_\epsilon = \int_\R y\rho_\epsilon(\dd y)$. From this it follows that
\begin{equation*}
\int_0^\infty{y^{k/2}\rho_\epsilon(\dd y)} = t\int_{\left\|\bx\right\|\geq\epsilon}\left\|\bx\right\|^k\nuL(\dd\bx)\leq t\int_{\left\|\bx\right\|<1}\left\|\bx\right\|^2\nuL(\dd\bx)+t\int_{\left\|\bx\right\|\geq1}\left\|\bx\right\|^k\nuL(\dd\bx)<\infty,\quad \forall\epsilon>0,
\end{equation*}
and
\begin{equation*}
 \bgamma_\epsilon=\int_0^\infty y\rho_\epsilon(\dd y) = t\int_{\left\|\bx\right\|\geq\epsilon}\left\|\bx\right\|^2\nuL(\dd\bx)\leq t\int_{\left\|\bx\right\|<1}\left\|\bx\right\|^2\nuL(\dd\bx)+t\int_{\left\|\bx\right\|\geq1}\left\|\bx\right\|^2\nuL(\dd\bx)<\infty,\quad \forall\epsilon>0.
\end{equation*}
\Cref{lemma-momentsidrv} then implies that $\lim_{\epsilon\to 0}\E Y_\epsilon^{k/2}$ is finite, which completes the proof.
\end{proof}

For the upcoming proof of proof of \cref{lemma-integrallevyapprox} we first show the following locality property of the approximation errors $\be_{I_f^\nu,n}^{\nu,(h)}$.
\begin{lemma}
\label{lemma-diffintiid}
For every positive integer $\nu\geq 2$ and every function $f$, the approximation error $\be_{I_f^\nu,n}^{\nu,(h)}$ is a function only of the increments $\{f(t)-f(n):n\leq t\leq n+\nu h\}$. This function is independent of $n$. In particular, $\be_{I_{\Lb}^\nu}^{\nu,(h)}$ is an i.i.d$.$ sequence.
\end{lemma}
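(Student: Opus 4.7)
The plan is to exploit the well-known iterated-integral identity $I_f^\nu(t)=\int_0^t\frac{(t-u)^{\nu-1}}{(\nu-1)!}f(u)\dd u$ together with \cref{lemma-propforwarddifference}\labelcref{lemma-propforwarddifference-smooth}, which tells us that $\Delta_h^\nu$ annihilates polynomials of degree at most $\nu-1$ and sends $s\mapsto s^\nu/\nu!$ to $1$. The whole argument is thus a careful bookkeeping of the dependence of $I_f^\nu(n+ih)$ on $f$.

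First I would evaluate $I_f^\nu$ at the points $n+ih$, $i=0,\dots,\nu$, needed to form $\Delta_h^\nu[I_f^\nu](n)$, and split the integration range into $[0,n]$ and $[n,n+ih]$:
\begin{equation*}
I_f^\nu(n+ih)=\underbrace{\int_0^n\frac{(n+ih-u)^{\nu-1}}{(\nu-1)!}f(u)\dd u}_{=:P_n(ih)}+\int_n^{n+ih}\frac{(n+ih-u)^{\nu-1}}{(\nu-1)!}f(u)\dd u.
\end{equation*}
The term $P_n(ih)$ is, as a function of $ih$, a polynomial of degree at most $\nu-1$ whose coefficients depend on the path of $f$ on $[0,n]$; by \cref{lemma-propforwarddifference}\labelcref{lemma-propforwarddifference-smooth} its forward difference of order $\nu$ vanishes. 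Hence all the ``pre-$n$ history'' drops out of $\Delta_h^\nu[I_f^\nu](n)$.

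Next I would decompose $f(u)=f(n)+[f(u)-f(n)]$ in the remaining integral and change variables $v=u-n$, which yields
\begin{equation*}
\int_n^{n+ih}\frac{(n+ih-u)^{\nu-1}}{(\nu-1)!}f(u)\dd u=f(n)\frac{(ih)^\nu}{\nu!}+\int_0^{ih}\frac{(ih-v)^{\nu-1}}{(\nu-1)!}\bigl[f(n+v)-f(n)\bigr]\dd v.
\end{equation*}
Applying $\Delta_h^\nu$ at $0$ to the first summand (as a function of $s=ih$) produces exactly $f(n)$ because $\DD^\nu(s^\nu/\nu!)=1$, and this cancels the $-f(n)$ in the definition of $\be_{I_f^\nu,n}^{\nu,(h)}$. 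Thus I arrive at the explicit representation
\begin{equation*}
\be_{I_f^\nu,n}^{\nu,(h)}=\frac{1}{h^\nu}\sum_{i=0}^\nu(-1)^{\nu-i}\binom{\nu}{i}\int_0^{ih}\frac{(ih-v)^{\nu-1}}{(\nu-1)!}\bigl[f(n+v)-f(n)\bigr]\dd v,
\end{equation*}
which manifestly depends only on the increments $\{f(n+v)-f(n):0\leq v\leq \nu h\}$, and through a functional that does not involve $n$ at all. This proves the first two assertions.

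For the final claim, specialise to $f=\Lb$. Identical distribution of the sequence $(\be_{I_\Lb^\nu,n}^{\nu,(h)})_{n\in\N}$ follows from the stationarity of the increments of $\Lb$, since the functional is the same for every $n$. Independence follows from the fact that, for distinct integers $n,m$ with $\nu h\leq 1$, the intervals $[n,n+\nu h]$ and $[m,m+\nu h]$ have Lebesgue-null intersection, so the processes of increments $\{\Lb(n+v)-\Lb(n)\}_{v\in[0,\nu h]}$ and $\{\Lb(m+v)-\Lb(m)\}_{v\in[0,\nu h]}$ generate independent $\sigma$-algebras by the independent-increments property of $\Lb$. The only mild obstacle is making sure one is entitled to restrict to the regime $\nu h\leq 1$ — this is harmless because in all applications in \cref{section-DTestimation} one sends $h\to 0$ with $\nu$ fixed. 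No step requires more than elementary manipulations once the polynomial/annihilation decomposition above is in place.
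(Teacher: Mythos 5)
Your proposal is correct and follows essentially the same route as the paper's proof: isolate the pre-$n$ contribution as a polynomial that $\Delta_h^\nu$ annihilates, obtain an explicit representation of $\be_{I_f^\nu,n}^{\nu,(h)}$ purely in terms of the increments $f(n+v)-f(n)$ on $[0,\nu h]$, and then invoke the stationary independent increments of $\Lb$. The only cosmetic difference is that you apply the Cauchy repeated-integration kernel $(t-u)^{\nu-1}/(\nu-1)!$ directly under a single $\Delta_h^\nu$, whereas the paper first factors $\Delta_h^\nu=\Delta_h^1\Delta_h^{\nu-1}$ and works with the degree-$(\nu-2)$ kernel; your explicit remark that independence needs $\nu h\leq 1$ is a welcome (and harmless) precision.
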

\begin{proof}
The claim can be shown by direct calculations: \cref{lemma-propforwarddifference},\labelcref{lemma-propforwarddifference-recursive} implies that
\begin{align*}
\Delta_h^\nu\left[I^\nu_f\right](n)=&\Delta_h^1\Delta_h^{\nu-1}\left[I^\nu_f\right](n)\\
=&\Delta_h^1\left[\frac{1}{h^{\nu-1}}\sum_{i=0}^{\nu-1}{(-1)^{\nu-1-i}\binom{\nu-1}{i}I^\nu_f(\cdot+ih)}\right](n)\\
=& \frac{1}{h^\nu}\sum_{i=0}^{\nu-1}{(-1)^{\nu-1-i}\binom{\nu-1}{i}\int_{n+ih}^{n+(i+1)h}I^{\nu-1}_f(s)\dd s}\\
  =&\frac{1}{h^\nu}\sum_{i=0}^{\nu-1}{(-1)^{\nu-1-i}\binom{\nu-1}{i}\int_{n+ih}^{n+(i+1)h}\left[\int_{0\leq t_{\nu-1}\leq\cdots\leq t_1\leq s}f(t_{\nu-1})\dd t_{\nu-1}\cdots \dd t_1\right]\dd s}.
\end{align*}
Using that the set $\{t_{\nu-1}\leq t_{\nu-2}\leq\cdots\leq t_1\leq s\}$ is congruent to the $(\nu-2)$-dimensional simplex in the hypercube with side lengths $s-t_{\nu-1}$ and that thus
\begin{equation*}
\int_{t_{\nu-1}\leq t_{\nu-2}\leq\cdots\leq t_1\leq s}\dd t_{\nu-2}\cdots \dd t_1=\frac{1}{(\nu-2)!}(s-t_{\nu-1})^{\nu-2},
\end{equation*}
we obtain that
\begin{align*}
\Delta_h^\nu\left[I^\nu_f\right](n)=&\frac{1}{h^\nu(\nu-2)!}\sum_{i=0}^{\nu-1}{(-1)^{\nu-1-i}\binom{\nu-1}{i}\int_{n+ih}^{n+(i+1)h}\int_{0}^s(s-t_{\nu-1})^{\nu-2}f(t_{\nu-1})\dd t_{\nu-1}\dd s}\\
=&\frac{1}{h^\nu(\nu-2)!}\int_{0}^n\left[\sum_{i=0}^{\nu-1}{(-1)^{\nu-1-i}\binom{\nu-1}{i}\int_{n+ih}^{n+(i+1)h}(s-t_{\nu-1})^{\nu-2}}\dd s\right]f(t_{\nu-1})\dd t_{\nu-1}\\
&+\frac{1}{h^\nu(\nu-2)!}\sum_{i=0}^{\nu-1}{(-1)^{\nu-1-i}\binom{\nu-1}{i}\int_{n+ih}^{n+(i+1)h}\int_{n}^s(s-t_{\nu-1})^{\nu-2}f(t_{\nu-1})\dd t_{\nu-1}\dd s}.
\end{align*}
It is easy to see that $\int_{n+ih}^{n+(i+1)h}(s-t_{\nu-1})^{\nu-2}\dd s$ is equal to $\mathfrak{p}_{\nu,h}(n-t_{\nu-1}+ih)$ for some polynomial $\mathfrak{p}_{\nu,h}$ of degree $\nu-2$.  It then follows from \cref{lemma-propforwarddifference},\labelcref{lemma-propforwarddifference-smooth} that
\begin{equation*}
\sum_{i=0}^{\nu-1}{(-1)^{\nu-1-i}\binom{\nu-1}{i}\int_{n+ih}^{n+(i+1)h}(s-t_{\nu-1})^{\nu-2}}\dd s =\Delta_h^{\nu-1}\left[\mathfrak{p}_{\nu,h}\right](n-t_{\nu-1}) = 0 ,\quad \forall t_{\nu-1}\in[0, n],
\end{equation*}
which implies that the first term in the last expression for $\Delta_h^\nu\left[I^\nu_f\right](n)$ vanishes. It is similarly easy to see that
\begin{equation*}
\frac{1}{h^\nu(\nu-2)!}\sum_{i=0}^{\nu-1}{(-1)^{\nu-1-i}\binom{\nu-1}{i}\int_{n+ih}^{n+(i+1)h}\int_{n}^s(s-t_{\nu-1})^{\nu-2}\dd t_{\nu-1}\dd s}=1.
\end{equation*}
Consequently,
\begin{align}
\label{eq-fnnuh}
\be_{I_f^\nu,n}^{\nu,(h)} =& \Delta_h^\nu\left[I^\nu_f\right](n)-f(n)\notag\\
  =& \frac{1}{h^\nu(\nu-2)!}\sum_{i=0}^{\nu-1}{(-1)^{\nu-1-i}\binom{\nu-1}{i}\int_{ih}^{(i+1)h}\int_0^{s}(s-t_{\nu-1})^{\nu-2}\left[f(n+t_{\nu-1})-f(n)\right]\dd t_{\nu-1}\dd s},
\end{align}
which completes the proof of the first part of the lemma. The fact that L\'evy processes have stationary and independent increments together with the last display implies that the sequence $\be_{I_{\Lb}^\nu}^{\nu,(h)}$ is i.i.d$.$
\end{proof}

\begin{proof}[Proof of \cref{lemma-integrallevyapprox}]
Let $\epsilon>0$ be given. By the right-continuity of the process $\Lb$ there exists $\delta_{\epsilon,n}$ such that $\left\|\Lb(n+t)-\Lb(n)\right\|\leq\epsilon$ for all $t\in[0,\delta_{\epsilon,n}]$. Hence, assuming $\nu h\leq\delta_{\epsilon,n}$, \cref{eq-fnnuh} implies that
\begin{align*}
\left\|\be_{I_{\Lb}^\nu,n}^{\nu,(h)}\right\|\leq&\frac{\epsilon}{h^\nu(\nu-2)!}\sum_{i=0}^{\nu-1}{\binom{\nu-1}{i}\int_{ih}^{(i+1)h}\int_0^{s}(s-t_{\nu-1})^{\nu-2}\dd t_{\nu-1}\dd s}\\
  =&\frac{\epsilon}{\nu!}\sum_{i=0}^{\nu-1}{\binom{\nu-1}{i}\left[(i+1)^\nu-i^\nu\right]}.
\end{align*}
This proves that $\left\|\be_{I_{\Lb}^\nu,n}^{\nu,(h)}\right\|\to 0$ as $h\to 0$. We now turn to the absolute moments of $\be_{I_{\Lb}^\nu,n}^{\nu,(h)}$. Again it entails no loss of generality to assume that $n=0$. \Cref{eq-fnnuh} and the triangle inequality lead to
\begin{align*}
\E\left\|\be_{I_{\Lb}^\nu,0}^{\nu,(h)}\right\|^k=&\E\left\|\frac{1}{h^\nu(\nu-2)!}\sum_{i=0}^{\nu-1}{(-1)^{\nu-1-i}\binom{\nu-1}{i}\int_{ih}^{(i+1)h}\int_0^{s}(s-t)^{\nu-2}\Lb(t)\dd t\dd s}\right\|^k\\
\leq&\left[\frac{1}{h^\nu(\nu-2)!}\right]^k\E\left(\sum_{i=0}^{\nu-1}{\binom{\nu-1}{i}\int_{ih}^{(i+1)h}\int_0^{s}(s-t)^{\nu-2}\left\|\Lb(t)\right\|\dd t\dd s}\right)^k.
\end{align*}
An application of H\"older's inequality with the dual exponent $k'$ determined by $1/k+1/k'=1$ shows that the last line of the previous display is dominated by
\begin{align*}
\leq& \left[\frac{1}{h^\nu(\nu-2)!}\right]^k \E\left(\sum_{i=0}^{\nu-1}{\binom{\nu-1}{i}^{k'}\int_{ih}^{(i+1)h}\int_0^{s}(s-t)^{k'(\nu-2)}\dd t\dd s}\right)^{k/{k'}}\left(\int_0^{\nu h}\int_0^{s}\left\|\Lb(t)\right\|^k\dd t\dd s\right)\\
=& \frac{C}{h^2}\E\left(\int_0^{\nu h}(\nu h-t)\left\|\Lb(t)\right\|^k\dd t\right),
\end{align*}
where the constant $C$ depends only on $\nu$ and $k$ and is given by
\begin{equation*}
 C=\frac{1}{(\nu-2)!} \left[ \frac{1}{(\nu-2)![k'(\nu-2)+2][k'(\nu-2)+1]}\sum_{i=0}^{\nu-1}{\binom{\nu-1}{i}^{k'}  \left[(i+1)^{k'(\nu-2)+2}-i^{k'(\nu-2)+2}\right]  }\right]^{k-1}.
\end{equation*}
\Cref{prop-levymoments} asserts the existence of a constant $C'$ such that $\E\left\|\Lb(t)\right\|^k<C't^{k/(k)_0}$ for all $t\leq \nu h$. Consequently
\begin{align*}
\E\left\|\be_{I_{\Lb}^\nu,0}^{\nu,(h)}\right\|^k\leq \frac{CC'}{h^2}\int_0^{\nu h}(\nu h-t)t^{k/(k)_0}\dd t=\frac{CC'\nu^{k/(k)_0+2}}{\left[k/(k)_0+1\right]\left[k/(k)_0+2\right]}h^{k/(k)_0},
\end{align*}
showing that $\E\left\|\be_{I_{\Lb}^\nu,0}^{\nu,(h)}\right\|^k=O(h^{k/(k)_0)})$ and thereby completing the proof of the lemma.
\end{proof}

We now turn our attention t the approximation of integrals. The following result provides a quantitative bound for the accuracy with which the trapezoidal rule approximates a definite integral if the integrand is a smooth function.
\begin{proposition}
\label{prop-trapez}
Let $[a,b]\subset\R$ be an interval and let $K$ be a positive integer.
\begin{enumerate}[i)]
\item\label{prop-trapez-scalar}Assume that $f:[a,b]\to\R$ is a twice differentiable function. Then
\begin{equation}
\left|\int_a^b{f(s)\dd s}-T_{[a,b]}^Kf\right|\leq\frac{(b-a)^3}{12K^2}\sup_{t\in[a,b]}\left|f''(t)\right|.
\end{equation}
\item\label{prop-trapez-vector} Assume that $F:[a,b]\to \R^d$ is a twice differentiable function. Then
\begin{equation}
\label{eq-vectortrapez}
\left\|\int_a^b{F(s)\dd s}-T_{[a,b]}^KF\right\|\leq\frac{(b-a)^3\sqrt{d}}{12K^2}\sup_{t\in[a,b]}\left\|F''(t)\right\|.
\end{equation}
\item\label{prop-trapez-matrix} Assume that $F:[a,b]\to M_d(\R)$ is a twice differentiable function. Then
\begin{equation}
\label{eq-matrixtrapez}
\left\|\int_a^b{F(s)\dd s}-T_{[a,b]}^KF\right\|\leq\frac{(b-a)^3d^{3/2}}{12K^2}\sup_{t\in[a,b]}\left\|F''(t)\right\|.
\end{equation}
\end{enumerate}
\end{proposition}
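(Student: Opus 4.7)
The plan is to prove the three estimates in order, starting with the scalar bound (i) via the classical Peano-kernel identity for the trapezoidal rule, and then to lift the result to vector- and matrix-valued integrands by elementary norm comparisons. The argument is purely analytic with no probabilistic input; the only care needed is the bookkeeping of dimensional constants in part (iii).

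For (i), I would set $h=(b-a)/K$ and $x_k=a+kh$ for $k=0,\ldots,K$, so that
\begin{equation*}
T_{[a,b]}^K f=\sum_{k=0}^{K-1}\frac{h}{2}\bigl[f(x_k)+f(x_{k+1})\bigr]
\end{equation*}
and the total error splits additively into the sum of local errors $\varepsilon_k\coloneqq\int_{x_k}^{x_{k+1}}f(s)\dd s-\frac{h}{2}[f(x_k)+f(x_{k+1})]$. Two integrations by parts, using first $\frac{\dd}{\dd s}\bigl[(s-x_k-h/2)f(s)\bigr]=f(s)+(s-x_k-h/2)f'(s)$ and then the fact that the quadratic $p(s)\coloneqq\tfrac12(s-x_k)(s-x_{k+1})$ satisfies $p'(s)=s-x_k-h/2$ and $p(x_k)=p(x_{k+1})=0$, yield the Peano-kernel representation
\begin{equation*}
\varepsilon_k=-\frac{1}{2}\int_{x_k}^{x_{k+1}}(s-x_k)(x_{k+1}-s)\,f''(s)\,\dd s.
\end{equation*}
Since $\int_{x_k}^{x_{k+1}}(s-x_k)(x_{k+1}-s)\dd s=h^3/6$, this gives $|\varepsilon_k|\leq\tfrac{h^3}{12}\sup_{[x_k,x_{k+1}]}|f''|$, and summing over $k=0,\ldots,K-1$ together with $Kh^3=(b-a)^3/K^2$ proves (i).

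For (ii), I would apply (i) componentwise to each scalar function $F^i:[a,b]\to\R$; using $|(F^i)''(t)|\leq\|F''(t)\|$ and the elementary inequality $\|\bv\|\leq\sqrt{d}\max_i|v^i|$ for $\bv\in\R^d$ immediately yields the bound claimed in (ii). For (iii), let $E\coloneqq\int_a^b F(s)\dd s-T_{[a,b]}^K F\in M_d(\R)$. Applying (i) entry-by-entry and using the two standard comparisons $|F_{ij}''(t)|\leq\|F''(t)\|_{F}\leq\sqrt{d}\,\|F''(t)\|$ (the first because any entry is dominated by the Frobenius norm, the second being the classical equivalence $\|M\|_F\leq\sqrt{d}\,\|M\|$ on $M_d(\R)$), one obtains the entrywise estimate $|E_{ij}|\leq\tfrac{(b-a)^3\sqrt{d}}{12K^2}\sup_{t\in[a,b]}\|F''(t)\|$. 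Combining $\|E\|\leq\|E\|_F\leq d\max_{ij}|E_{ij}|$ then produces the additional factor $d$ and gives the $d^{3/2}$ prefactor in (iii). The only step that merits attention is this last chain of norm comparisons, which must be made in the right order so that the two $\sqrt{d}$ factors (one coming from the Frobenius-vs.-operator bound on $F''$, one from the entrywise-vs.-Frobenius bound on $E$) combine with the scalar bound to give the stated constant.
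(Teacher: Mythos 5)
Your proof is correct and follows essentially the same route as the paper: part (ii) by componentwise application of the scalar bound together with $\|\bv\|\leq\sqrt{d}\max_i|v^i|$, and part (iii) by the same $d^{3/2}$ norm bookkeeping (your entrywise/Frobenius chain is equivalent to the paper's appeal to the comparison $\|M\|\leq\|M\|_{\R^{d^2}}\leq\sqrt{d}\,\|M\|$ under the identification $M_d(\R)\cong\R^{d^2}$). The only difference is that you supply a self-contained Peano-kernel proof of the scalar estimate (i), which checks out, whereas the paper simply cites it from Deuflhard and Hohmann.
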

\begin{proof}
Part \labelcref{prop-trapez-scalar} is \citep[Lemma 9.8]{Deuflhard2008}. To see that \labelcref{prop-trapez-vector} holds it is enough to apply \labelcref{prop-trapez-scalar} componentwise to obtain that 
\begin{align*}
\left\|\int_a^b{F(s)\dd s}-T_{[a,b]}^KF\right\|\leq&\sqrt{d}\max_{1\leq i\leq d}\left|\left[\int_a^b{F(s)\dd s}-T_{[a,b]}^KF\right]_i\right|\\
  \leq& \frac{(b-a)^3\sqrt{d}}{12K^2}\max_{1\leq i\leq d}\sup_{t_i\in[a,b]}\left|F_i''(t_i)\right|\\
  =& \frac{(b-a)^3\sqrt{d}}{12K^2}\sup_{t\in[a,b]}\max_{1\leq i\leq d}\left|F_i''(t)\right|\leq\frac{(b-a)^3\sqrt{d}}{12K^2}\sup_{t\in[a,b]}\left\|F''(t)\right\|.
\end{align*}
The claim \labelcref{eq-matrixtrapez} about matrix-valued integrands follows from the fact that $M_d(\R)$ is canonically isomorphic to $\R^{d^2}$ and that the operator norm and the Euclidean vector norm induced by this isomorphism satisfy (\citep{stone1962})
\begin{equation*}
\frac{1}{\sqrt{d}}\left\|M\right\|_{R^{d^2}}\leq \left\|M\right\|\leq \left\|M\right\|_{\R^{d^2}},\quad \text{ for all } M\in M_d(\R).{{{}}}\qedhere
\end{equation*}
\end{proof}

\subsection{Auxiliary results for \cref{sec-GMM}}
The following lemmas are used in the proof of the Central Limit Theorem \labelcref{theorem-GMMnew}.
\begin{lemma}
\label{lemma-basicconvp}
For sequences $(Y_n)_{n\geq 1}$, $(Z_n)_{n\geq 1}$ of vector- or matrix-valued random variables the following hold:
\begin{enumerate}[i)]
 \item For every constant $c$, $Y_n\convp c$ if and only if $Y_n\convd c$.
 \item If $Y_n\convd Y_{\infty}$ and $Z_n-Y_n\convp 0$ then $Z_n\convd Y_{\infty}$.
 \item Denote by $\operatorname{supp}Y_n$ the support of $Y_n$. If $Y_n\convd Y_\infty$ and the function $f$ is defined on $\bigcap_{n\geq 1}\operatorname{supp}Y_n$ and continuous on an open set containing $\operatorname{supp}Y_\infty$ then $f(Y_n)\convd f(Y_\infty)$.
\end{enumerate}
\end{lemma}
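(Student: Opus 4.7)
The plan is to prove each of the three statements by reduction to classical tools: the Portmanteau theorem, Slutsky's device of joint convergence, and the continuous mapping theorem. All three results are standard (see, e.g., \citep{vanderwaart1998}) but I will sketch the arguments to make the presentation self-contained.

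For part (i), the forward implication ``convergence in probability to $c$ implies convergence in distribution to $c$'' is standard and holds for any limit random variable; it follows from the Portmanteau theorem via the observation that for any bounded continuous $f$, $|\E f(Y_n) - f(c)| \leq \E|f(Y_n) - f(c)|I_{\{\|Y_n - c\| \leq \delta\}} + 2\|f\|_\infty \Pb(\|Y_n - c\| > \delta)$, both terms going to zero as $n \to \infty$ and $\delta \to 0$. For the converse, I would use the fact that the distribution function of the constant $c$ is $\delta_c$, so that all points $x \neq c$ are continuity points. For any $\epsilon > 0$, the set $\{y : \|y - c\| \leq \epsilon\}$ is a closed ball whose boundary has $\delta_c$-measure zero, so by the Portmanteau theorem $\Pb(\|Y_n - c\| \leq \epsilon) \to 1$, which is exactly convergence in probability.

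For part (ii), the idea is to pass to joint convergence. Writing $Z_n = Y_n + (Z_n - Y_n)$, I would first observe that since $Z_n - Y_n \convp 0$ and $Y_n \convd Y_\infty$, the pair $(Y_n, Z_n - Y_n)$ converges in distribution to $(Y_\infty, 0)$; this is a standard consequence of the fact that a sequence converging in probability to a constant is jointly asymptotically independent of any other tight sequence. Then the addition map $(x, y) \mapsto x + y$ is continuous, so by the continuous mapping theorem $Z_n = Y_n + (Z_n - Y_n) \convd Y_\infty + 0 = Y_\infty$.

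For part (iii), this is essentially the continuous mapping theorem under the slightly relaxed hypothesis that $f$ is only continuous on an open set $U$ containing $\operatorname{supp} Y_\infty$ (rather than everywhere). Letting $g$ be any bounded continuous function on $\R^q$, it suffices to show $\E g(f(Y_n)) \to \E g(f(Y_\infty))$. Since $f$ is continuous on the open set $U$ and $Y_\infty$ takes values in $U$ almost surely, the composition $g \circ f$ extended arbitrarily off $U$ is continuous on a set of full $Y_\infty$-measure; the Portmanteau theorem (version for functions continuous almost everywhere under the limit distribution) then yields $\E g(f(Y_n)) \to \E g(f(Y_\infty))$. The only subtlety is ensuring that $f(Y_n)$ is well-defined, which is exactly why we assume $f$ is defined on $\bigcap_n \operatorname{supp} Y_n$.

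None of the three parts poses a real obstacle; the main care is in the formulation of (iii), where we must invoke the form of the continuous mapping theorem that only requires continuity on a set of full measure under the limiting law rather than global continuity. The entire proof could reasonably be replaced by a citation of \citep[Theorems 2.7 and 18.10]{vanderwaart1998}.
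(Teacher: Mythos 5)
Your proposal is correct and matches the paper's treatment: the paper simply cites \citep[Theorem 2.7]{vanderwaart1998} for parts i) and ii) and \citep[Theorem 13.25]{klenke2008probability} for part iii), and your sketches are exactly the standard Portmanteau/Slutsky/continuous-mapping arguments behind those references. Your closing remark that the whole proof can be replaced by such citations is precisely what the paper does.
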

\begin{proof}
Parts i) and ii) are proved in \citep[Theorem 2.7]{vanderwaart1998}. Assertion iii) is \citep[Theorem 13.25]{klenke2008probability}).
\end{proof}
The next result we will need is a uniform version of the weak law of large numbers, given by \citep[Lemma 2.4]{newey1994largesample}.
\begin{lemma}
\label{lemma-uniformWLLN}
Assume that for every $\vartheta\in\Theta$, $\Theta$ a compact subset of $\R^r$, there is a sequence $\left(Y_n(\vartheta)\right)_{n\geq 1}$ of independent identically distributed random variables with finite expectation $\psi(\vartheta)=\E Y_1(\vartheta)<\infty$. Further assume that for each $\vartheta'\in\Theta$ the random function $\vartheta\mapsto Y_1(\vartheta)$ is almost surely continuous at $\vartheta'$ and that there exists a random variable $Z$ satisfying $\E Z<\infty$ such that $\sup_{\vartheta\in\Theta}\left\|Y_1(\vartheta)\right\|\leq Z$.  It then holds that $\vartheta\mapsto\psi(\vartheta)$ is a continuous function and the time averages $\overline Y_N(\vartheta)=\sum_{n=1}^N{Y_n(\vartheta)}$ converge uniformly in probability to $\psi(\vartheta)$, that is $\sup_{\vartheta\in\Theta}\left\|\overline Y_N(\vartheta)-\psi(\vartheta)\right\|\convp 0$.
\end{lemma}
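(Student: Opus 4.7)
The plan is to prove this classical uniform weak law of large numbers (essentially \citep[Lemma 2.4]{newey1994largesample}) by combining the pointwise weak law of large numbers with a compactness-based equicontinuity argument. The integrable envelope $Z$ and almost-sure continuity of $\vartheta \mapsto Y_1(\vartheta)$ do the work of propagating a single-point LLN to one that is uniform in $\vartheta$, via dominated convergence on an oscillation modulus.

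The first step is to verify continuity of $\psi$. Fix $\vartheta'\in\Theta$ and a sequence $\vartheta_k\to\vartheta'$. The almost sure continuity assumption gives $Y_1(\vartheta_k)\to Y_1(\vartheta')$ almost surely, and the envelope bound $\|Y_1(\vartheta_k)\|\leq Z$ with $\E Z<\infty$ lets the dominated convergence theorem conclude that $\psi(\vartheta_k)\to\psi(\vartheta')$. Next, for $\delta>0$ and $\vartheta\in\Theta$ I would introduce the oscillation modulus
\begin{equation*}
m(\vartheta,\delta) \coloneqq \sup_{\vartheta'\in\Theta,\,\|\vartheta'-\vartheta\|<\delta}\|Y_1(\vartheta')-Y_1(\vartheta)\|,
\end{equation*}
which is measurable because $\Theta\subset\R^r$ is separable and hence the supremum may be taken over a countable dense subset. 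By assumption $m(\vartheta,\delta)\leq 2Z$ and $m(\vartheta,\delta)\to 0$ almost surely as $\delta\to 0$, so a second application of dominated convergence yields $\E m(\vartheta,\delta)\to 0$ as $\delta\to 0$ for every $\vartheta\in\Theta$.

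The third step uses compactness of $\Theta$. Given $\epsilon>0$, for each $\vartheta\in\Theta$ choose $\delta(\vartheta)>0$ with $\E m(\vartheta,\delta(\vartheta))\leq\epsilon/6$; extract a finite subcover of $\Theta$ by balls $B(\vartheta_j,\delta(\vartheta_j))$, $j=1,\ldots,J$. For any $\vartheta\in B(\vartheta_j,\delta(\vartheta_j))$ the triangle inequality gives
\begin{equation*}
\|\overline Y_N(\vartheta)-\psi(\vartheta)\|\leq\|\overline Y_N(\vartheta)-\overline Y_N(\vartheta_j)\|+\|\overline Y_N(\vartheta_j)-\psi(\vartheta_j)\|+\|\psi(\vartheta_j)-\psi(\vartheta)\|.
\end{equation*}
The first summand is bounded by $N^{-1}\sum_{n=1}^N m_n(\vartheta_j,\delta(\vartheta_j))$, where $m_n(\vartheta_j,\delta(\vartheta_j))$ are i.i.d.\ copies of $m(\vartheta_j,\delta(\vartheta_j))$; by the classical weak law of large numbers this average converges in probability to $\E m(\vartheta_j,\delta(\vartheta_j))\leq\epsilon/6$. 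The third summand is dominated by $\E m(\vartheta_j,\delta(\vartheta_j))\leq\epsilon/6$ via the bound $\|\psi(\vartheta_j)-\psi(\vartheta)\|\leq\E\|Y_1(\vartheta_j)-Y_1(\vartheta)\|$. The middle summand converges to zero in probability by the pointwise weak law of large numbers applied at $\vartheta_j$. Taking the supremum over $\vartheta$ and a union bound over the finite index $j=1,\ldots,J$ shows that $\Pb(\sup_{\vartheta\in\Theta}\|\overline Y_N(\vartheta)-\psi(\vartheta)\|>\epsilon)\to 0$.

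The main obstacle is the bookkeeping of the three $\epsilon/6$ bounds rather than any deep argument: one must check measurability of the oscillation supremum (handled by separability of $\Theta$), keep track of which terms vanish in probability versus which are deterministically small, and combine a finite union bound with the three convergences. Since the result is classical, my presentation would ultimately be a pointer to \citep[Lemma 2.4]{newey1994largesample} with the skeleton above supplied for the reader's convenience.
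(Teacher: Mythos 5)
Your proof is correct and is essentially the argument the paper relies on: the paper gives no proof of its own but simply cites \citep[Lemma 2.4]{newey1994largesample}, whose proof is exactly your compactness--plus--dominated-convergence argument on the oscillation modulus $m(\vartheta,\delta)$. The only point worth flagging is the measurability of the supremum defining $m(\vartheta,\delta)$, which your separability remark handles in the same (slightly informal) way as the cited source.
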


\begin{lemma}
\label{lemma-continuousmapping}
For each $\vartheta\in\Theta$, let $\left(Y_n(\vartheta)\right)_{n\geq 1}$ be a sequence of random variables. If $Y_n(\vartheta)\convp Y_{\infty}(\vartheta)$ uniformly in $\vartheta$, the sequence $(\vartheta_n)_{n\geq 1}$ of random elements of $\Theta$ converges in probability to some $\vartheta_\infty$ and the mapping $\vartheta\mapsto Y_\infty(\vartheta)$ is almost surely continuous at $\vartheta_\infty$, then $Y_n(\vartheta_n)\convp Y_\infty(\vartheta_\infty)$.
\end{lemma}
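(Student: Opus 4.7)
The plan is to use the triangle-inequality decomposition
\begin{equation*}
Y_n(\vartheta_n) - Y_\infty(\vartheta_\infty) = \bigl[Y_n(\vartheta_n) - Y_\infty(\vartheta_n)\bigr] + \bigl[Y_\infty(\vartheta_n) - Y_\infty(\vartheta_\infty)\bigr]
\end{equation*}
and show that each bracketed difference tends to zero in probability. By the subadditivity of $\Pb$, it then suffices to bound each term separately.

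For the first term, I would use the hypothesis of uniform convergence in probability. For every $\epsilon>0$ the event $\{\|Y_n(\vartheta_n)-Y_\infty(\vartheta_n)\|\geq \epsilon\}$ is contained in $\{\sup_{\vartheta\in\Theta}\|Y_n(\vartheta)-Y_\infty(\vartheta)\|\geq \epsilon\}$, whose probability tends to zero by assumption. Hence the first bracketed difference converges to zero in probability without any use of the continuity hypothesis.

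For the second term, I would argue that $\vartheta\mapsto Y_\infty(\vartheta)$, being almost surely continuous at $\vartheta_\infty$, sends sequences converging in probability to $\vartheta_\infty$ to sequences converging in probability to $Y_\infty(\vartheta_\infty)$. Concretely, given $\epsilon,\delta>0$, choose (on the full-probability set where $Y_\infty$ is continuous at $\vartheta_\infty$) a random $\eta>0$ such that $\|\vartheta-\vartheta_\infty\|<\eta$ implies $\|Y_\infty(\vartheta)-Y_\infty(\vartheta_\infty)\|<\epsilon$; then pick a deterministic $\eta'$ small enough that the event $\{\eta\geq \eta'\}$ has probability at least $1-\delta/2$, and use $\vartheta_n\convp\vartheta_\infty$ to make $\Pb(\|\vartheta_n-\vartheta_\infty\|\geq \eta')\leq\delta/2$ for all sufficiently large $n$. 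This mirrors the standard proof of the continuous mapping theorem (cf.\ \cref{lemma-basicconvp}(iii)) but specialised to convergence in probability to a point at which continuity holds.

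The main (very minor) subtlety is handling the randomness in $\eta$ cleanly; everything else is a routine triangle-inequality argument, and no further probabilistic tool is required. Combining the two convergences through the triangle inequality yields $\Pb(\|Y_n(\vartheta_n)-Y_\infty(\vartheta_\infty)\|\geq 2\epsilon)\to 0$ for every $\epsilon>0$, which is the desired claim.
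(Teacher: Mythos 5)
Your proposal is correct and follows essentially the same route as the paper: the same triangle-inequality decomposition into $Y_n(\vartheta_n)-Y_\infty(\vartheta_n)$ (controlled by uniform convergence) and $Y_\infty(\vartheta_n)-Y_\infty(\vartheta_\infty)$ (controlled by a.s.\ continuity at $\vartheta_\infty$ together with $\vartheta_n\convp\vartheta_\infty$), combined via an elementary probability bound. The only difference is cosmetic — the paper uses $\Pb(A\cap B)\geq\Pb(A)+\Pb(B)-1$ where you use subadditivity on complements — and you in fact spell out the random-modulus-of-continuity step that the paper leaves implicit.
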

\begin{proof}
For any $\epsilon>0$ it holds that
\begin{align*}
\Pb\left(\left\|Y_n(\vartheta_n)-Y_\infty(\vartheta_\infty)\right\|\leq \epsilon\right) \geq & \Pb\left(\left\|Y_n(\vartheta_n)-Y_\infty(\vartheta_n)\right\|\leq\frac{\epsilon}{2}\text{ and } \left\|Y_\infty(\vartheta_n)-Y_\infty(\vartheta_\infty)\right\|\leq\frac{\epsilon}{2}\right)\\
  \geq & \Pb\left(\left\|Y_n(\vartheta_n)-Y_\infty(\vartheta_n)\right\|\leq\frac{\epsilon}{2}\right) + \Pb\left(\left\|Y_\infty(\vartheta_n)-Y_\infty(\vartheta_\infty)\right\|\leq\frac{\epsilon}{2}\right) - 1\to 1.
\end{align*}
The first probability in the last line converges to one as $n\to\infty$ by the assumption of uniform convergence of $Y_n$ to $Y_\infty$, the second because $Y_\infty$ is almost surely continuous at $\vartheta_\infty$ and $\vartheta_n\convp\vartheta_\infty$.
\end{proof}

\FloatBarrier

\printbibliography

\end{document}